\newcommand*{\rom}[1]{\expandafter\@slowromancap\romannumeral #1@}
\newcolumntype{P}[1]{>{\centering\arraybackslash}p{#1}}
\newtheorem{proposition}{Proposition}
\newtheorem{remark}{Remark}
\newtheorem{assumption}{Assumption}
\numberwithin{equation}{section}
\newenvironment{myenv}[1]
  {\mdfsetup{
    frametitle={\colorbox{white}{\space#1\space}},
    innertopmargin=10pt,
    frametitleaboveskip=-\ht\strutbox,
    frametitlealignment=\center
    }
  \begin{mdframed}
  }
  {\end{mdframed}}
\begin{document}
\title{Modeling finite viscoelasticity based on the Green-Naghdi kinematic assumption and generalized strains}
\author{Ju Liu, Chongran Zhao, and Jiashen Guan\\
\textit{\small Department of Mechanics and Aerospace Engineering,}\\
\textit{\small Southern University of Science and Technology,}\\
\textit{\small 1088 Xueyuan Avenue, Shenzhen, Guangdong 518055, China}}
\date{}
\maketitle

\section*{Abstract}
We propose a modeling framework for finite viscoelasticity, inspired by the kinematic assumption made by Green and Naghdi in plasticity. This approach fundamentally differs from the widely used multiplicative decomposition of the deformation gradient, as the intermediate configuration, a concept that remains debated, becomes unnecessary. The advent of the concept of generalized strains allows the Green-Naghdi assumption to be employed with different strains, offering a flexible mechanism to separate inelastic deformation from total deformation. This leads to a constitutive theory in which the kinematic separation is adjustable and can be calibrated. For quadratic configurational free energy, the framework yields a suite of finite linear viscoelasticity models governed by linear evolution equations. Notably, these models recover established models, including those by \cite{Green1946} and \cite{Simo1987}, when the Seth-Hill strain is chosen with the strain parameter being $-2$ and $2$, respectively. It is also related to the model of \cite{Miehe2000} when the strain is of the Hencky type. We further extend the approach by adopting coercive strains, which allows us to define an elastic deformation tensor locally. This facilitates modeling the viscous branch using general forms of the configurational free energy, and we construct a micromechanical viscoelastic model as a representative instantiation. The constitutive integration algorithms of the proposed models are detailed. We employ the experimental data of VHB 4910 to examine the proposed models, which demonstrate their effectiveness and potential advantages in the quality of fitting and prediction. Three-dimensional finite element analysis is also conducted to assess the influence of different strains on the viscoelastic behavior.

\vspace{5mm}

\noindent \textbf{Keywords:} Viscoelasticity, Generalized strains, Green-Naghdi plasticity, Constitutive theory, Finite linear viscoelasticity, Nonlinear finite element analysis

\section{Introduction}
\label{sec:introduction}
Viscoelastomers and biological tissues are highly deformable materials that exhibit rate-dependent and dissipative mechanical responses. They play essential roles in automotive, civil, and biomedical applications. Recent years have witnessed significant progress in developing novel devices utilizing these materials, including hydrogels (\cite{Mao2017}), dielectric  elastomer (\cite{Hong2011}), hard magnetic soft materials (\cite{Li2024}), tissue engineering (\cite{Terzano2023}), to name a few. Constitutive modeling serves as a cornerstone for understanding the complex mechanical behavior of these materials, paving the way for predicting and optimizing their performance.

In the modeling of finite inelasticity, a first fundamental question is to quantify and distinguish elastic and inelastic kinematic responses within the total deformation. The multiplicative decomposition of the deformation gradient (\cite{Kroener1959,Lee1969}) has been a widely accepted and effective strategy in capturing a range of inelastic material behaviors. This kinematic assumption equivalently leads to the concept of an imaginary intermediate configuration, which has been physically well-grounded in single crystal plasticity. \cite{Sidoroff1974} introduces this kinematic assumption in the modeling of finite viscoelasticity. \cite{Lubliner1985} leveraged the multiplicative decomposition and heuristically proposed a linear evolution equation for the inverse of the viscous deformation tensor, drawing inspiration from the earlier work of \cite{Green1946}. In the 1990s, systematic development of finite strain viscoelasticity models was undertaken by several groups (\cite{Tallec1993,Lion1997,Reese1998}). More recently, \cite{Gouhier2024} revealed that those three are essentially equivalent, differing only in their formulations based on the referential, intermediate, or current configurations. This finding underscores the critical importance of the kinematic decomposition, as it fundamentally shapes the resulting constitutive relations. 

It needs to be pointed out that the multiplicative decomposition is not without shortcomings. The chosen internal variable belongs to $GL(3)_{+}$, meaning the intermediate configuration is not uniquely defined. For anisotropic materials, this becomes a severe issue as the evolution of the material symmetry groups need to be specified (\cite{Ciambella2021}). \cite{Latorre2016} introduced a reverse decomposition of the deformation gradient, enabling the formulation of anisotropic models on the referential configuration. To date, a proper setup for the intermediate configuration concept remains a topic of debate (\cite{Sadik2024,Bahreman2022}).

Without clarifying the underlying kinematic assumption, \cite{Simo1987} proposed a viscoelastic model with the viscous evolution equation governed by a linear rate equation, which can be viewed as a heuristic extension of the standard linear solid model to the finite strain regime. On top of this framework, the identical polymer chain assumption and damage model have been incorporated to account for more realistic polymer behaviors (\cite{Govindjee1992}). An appealing aspect is that the constitutive relation can be integrated by a single-step recurrence formula, which renders it quite amenable to finite element implementation. To date, this model is adopted in mainstream commercial software (\cite{Gouhier2024}). Moreover, the theory of \cite{Simo1987} does not involve the intermediate configuration concept, making it quite convenient to account for material anisotropy. Exploiting the evolution equation, \cite{Holzapfel2001} introduced additional structural tensors to model composites reinforced by two families of fibers.

Due to the linear nature of the evolution equations, both models of \cite{Lubliner1985} and \cite{Simo1987} are referred to as \textit{finite deformation linear viscoelasticity} in the literature. Unfortunately, neither author provided a derivation of these linear evolution equations from a rigorous thermomechanical framework. A critical issue with the model of \cite{Simo1987} was later identified by \cite{Govindjee2014}, who observed a finite-time blow-up solution when analyzing a freely spinning cylinder. This finding signified a concerning issue, that is, the widely used viscoelasticity model is unstable in energy. Building on an earlier effort by \cite{Holzapfel1996}, \cite{Liu2021b} addressed this issue by identifying the configurational free energy and deriving Simo's viscoelastic model based on the second law of thermodynamics. Some inconsistently used terms were rectified through the derivation. For example, the non-equilibrium stress and the thermodynamic force conjugate to the internal variable are distinguished. Moreover, \cite{Liu2021b} emphasized the importance of the relaxation property, which poses an additional design principle for the configurational free energy, ensuring the physical reliability of the model.

In \cite{Liu2021b}, it was found that a particular instantiation of Simo's model can be recovered by the configurational free energy, denoted by $\Upsilon$, taking the form
\begin{align*}
\Upsilon = \frac{\mu^{\mathrm{neq}}}{2} \left\lvert \tilde{\bm E} - \bm E^{\mathrm v} \right\rvert^2,
\end{align*}
where $\tilde{\bm E} := (\tilde{\bm C}-\bm I)/2$, $\bm E^{\mathrm v}:= (\bm \Gamma - \bm I)/2$, and $\bm \Gamma  \in \mathrm{Sym}_{+}(3)$ is the internal variable characterizing inelastic material behavior. This observation suggests that the internal variable enters into the model through the term $\tilde{\bm E}(\tilde{\bm C}) - \bm E^{\mathrm v}(\bm \Gamma)$, closely aligning with the kinematic assumption of Green and Naghdi (\cite{Green1965,Naghdi1990}). With those observations, \cite{Liu2024} extended the model of \cite{Simo1987} to the fully nonlinear regime by incorporating the Green-Naghdi kinematic assumption and the generalized strain concept due to \cite{Hill1979}. In particular, the configurational free energy was formulated as a quadratic function of $\bm E(\bm C) - \bm E^{\mathrm v}(\bm \Gamma)$ (or $\tilde{\bm E}(\tilde{\bm C}) - \bm E^{\mathrm v}(\bm \Gamma)$), in which $\bm E$ and $\bm E^{\mathrm v}$ are defined as the generalized strains of the deformation tensor $\bm C$ and the internal variable $\bm \Gamma$, respectively. It was rigorously proved that the kinematic assumption ensures the relaxation property of the non-equilibrium stress. Another notable aspect of this approach is that it does not involve the intermediate configuration, bypassing the need to specify the kinematic laws for this imaginary configuration. This is particularly appealing in modeling anisotropy. 

Choosing $\bm \Gamma \in \mathrm{Sym}_{+}(3)$ as the internal variable is inspired and influenced by the model of \cite{Simo1987} and subsequent efforts to rationalize it (\cite{Holzapfel1996,Liu2021b}). It is also closely related to the concept of \textit{plastic metric} introduced by \cite{Miehe1998} (see also \cite{Miehe2000}). In this study, we develop a viscoelastic modeling framework by choosing $\bm E^{\mathrm v} \in \mathrm{Sym}(3)$ as the internal variable. This choice leads to a new class of constitutive theories, warranting further exploration for the following reasons.
\begin{itemize}
\item A family of linear viscoelastic evolution equations can be constructed when the configurational free energy is quadratic. In particular, the models of \cite{Green1946,Lubliner1985} and \cite{Simo1987} are recovered when the generalized strains are of the Euler-Almansi and Green-Lagrange type, respectively. In this regard, the proposed framework elegantly unifies and generalizes the existing finite deformation linear viscoelasticity theories.
\item With the advent of the generalized strains, the non-coerciveness issue of the classical Seth-Hill strain family is rectified, allowing us to introduce the concept of the elastic strain $\bm E^{\mathrm e}$ as $\bm E - \bm E^{\mathrm v}$. Notice that one cannot do so with the strain of the Seth-Hill family, and this is partly the reason that Green and Naghdi avoid interpreting their kinematic assumption as the additive decomposition (\cite{Naghdi1990}). A direct consequence is that this decomposition is compatible with the additive decomposition of strain in the infinitesimal inelasticity theory. This approach allows us to model the non-equilibrium material behavior using the general strategies of hyperelasticity modeling.
\item From the computational perspective, choosing a member in $\mathrm{Sym}(3)$ as the internal variable offers convenience in designing constitutive integration algorithms. In contrast, if we use $\bm \Gamma \in \mathrm{Sym}_{+}(3)$ as the internal variable, the constitutive integration needs to guarantee its positiveness, which can be quite non-trivial (see \cite[p.~74]{Simo1992e}).
\end{itemize}

In the remainder of this article, we begin by presenting a general theory in Section \ref{sec:theory}, focusing on the generalized strains and Green-Naghdi kinematic assumption. The relationship between the developed theory and existing models is discussed in Section \ref{sec:choice-of-ISV}. Section \ref{sec:quad-energy-model} discusses the quadratic configurational energy, which leads to a family of finite linear viscoelasticity. In Section \ref{sec:micromechanical-model}, we introduce the elastic strain based on the adoption of coercive strains and propose a model based on the micromechanically inspired energy. Section \ref{sec:results} presents the calibration and finite element analysis results of the proposed models, and we draw conclusive remarks in Section \ref{sec:conclusion}.

\section{Theory}
\label{sec:theory}
In this section, we present a nonlinear finite viscoelasticity modeling framework based on the Green-Naghdi kinematic assumption. We will review the recent progress in the development of generalized strains, which are utilized here as an effective tool for separating the inelastic deformation from the total deformation.  At the beginning of our discussion, we make the following clarifications.
\begin{enumerate}
\item Einstein summation notation is \textit{not} adopted, and all summations are written explicitly with a summation sign.
\item The rank-two identity and zero tensors are denoted by $\bm I$ and $\bm O$, respectively; the rank-four identity tensor is denoted by $\mathbb I$.
\item The norm of a generic rank-two tensor $\bm A$ is defined as $\left \lvert \bm A \right\rvert := \left( \mathrm{tr}[\bm A \bm A^T] \right)^{\frac12} = \left( \bm A : \bm A \right)^{\frac12}$.
\item In the construction of a general theory, we use the rank-four viscosity tensor $\mathbb V$ to describe the dissipative material behavior. In the instantiation of models in this framework, we assume the viscosity tensor is isotropic with identical deviatoric and volumetric viscosities, that is, $\mathbb V  = \eta \mathbb I$, where $\eta$ represents the viscosity.
\item The following notations are adopted. GL(3)$_{+}$ represents the group of rank-two tensors with positive determinant; Sym(3) denotes the set of symmetric rank-two tensors; Sym(3)$_{+}$ stands for the set of symmetric positive definite rank-two tensors.
\end{enumerate}

\subsection{Continuum basis}
\label{sec:Kinematics}
We consider the initial configuration of a continuum body $\Omega_{\bm X} \subset \mathbb R^3$ as the reference configuration and assume it to be stress-free. The initial position of a material point is labeled as $\bm X \in \Omega_{\bm X}$, and the motion is described by $\bm x = \bm \varphi(\bm X, t) =  \bm \varphi_t(\bm X)$. Parameterized by the time coordinate $t$, the map $\bm \varphi_t$ is a one-to-one correspondence between the original material point at $\bm X \in \Omega_{\bm X}$ and its current location $\bm x$ at time $t$. In this work, we use a superposed dot to represent the material time derivative when it does not cause ambiguity, and we use $d(\cdot)/dt$ explicitly to denote the material time derivative of $(\cdot)$ when necessary. The displacement field of the material particle $\bm X$ is defined as $\bm U(\bm X, t) := \bm \varphi(\bm X, t) - \bm X$, and we define the velocity as the total time derivative of $\bm U(\bm X, t)$, that is, $\bm V := \dot{\bm U}$. The deformation gradient and Jacobian are, respectively, defined as
\begin{align*}
\bm F := \frac{\partial \bm \varphi_t}{\partial \bm X} \quad \mbox{and} \quad J:=\mathrm{det}(\bm F).
\end{align*}
For materials exhibiting distinct bulk and shear responses, it is useful to decompose the deformation gradient into volumetric and isochoric parts as
\begin{align}
\label{eq:Flory_decomposition}
\bm F = \bm F_{\mathrm{vol}} \tilde{\bm F}, \qquad \bm F_{\mathrm{vol}} := J^{1/3}\bm I, \qquad \tilde{\bm F} := J^{-1/3}\bm F.
\end{align}
The right Cauchy-Green deformation tensor $\bm{C} := \bm{F}^T\bm{F}$ and its modified counterpart $\tilde{\bm C} := \tilde{\bm F}^{T} \tilde{\bm F}$ enjoy the following spectral representations,
\begin{align}
\label{eq:C_spectral}
\bm C = \sum_{a=1}^{3} \lambda_a^2 \bm M_a \quad \mbox{and} \quad \tilde{\bm C} = \sum_{a=1}^{3} \tilde{\lambda}_a^2 \bm M_a,
\end{align}
in which $\lambda_a$ ($\tilde{\lambda}_a := J^{-1/3} \lambda_a$) are the (modified) principal stretches, and $\bm M_a := \bm{N}_a \otimes \bm{N}_a$ are the self-dyads of the principal referential directions $\bm N_a$, for $a=1,2,3$. Using
\begin{align*}
\bm F \bm N_a = \lambda_a \bm n_a,
\end{align*}
the spectral representations of $\bm{F}$ and $\tilde{\bm F}$ take the form
\begin{align*}
\bm{F} = \sum_{a=1}^3 \lambda_a \bm{n}_a \otimes \bm{N}_a \quad \mbox{and} \quad \tilde{\bm{F}} = \sum_{a=1}^3 \tilde{\lambda}_a \bm{n}_a \otimes \bm{N}_a,
\end{align*}
where the unit-length spatial vectors $\bm n_a$ are known as the principal spatial directions.

In this study, generalized strains play a crucial role in characterizing inelastic deformations. It is a concept proposed in the pioneering work of \cite{Hill1968,Hill1979}. The generalized Lagrangian strain $\bm E$ is defined as
\begin{align}
\label{eq:Hill_strain}
\bm{E} := \sum_{a=1}^{3} E (\lambda_a) \bm{M}_a, 
\end{align}
with $E: (0,\infty) \rightarrow \mathbb R$ being a \textit{scale function}. Analogously, the strain associated with the isochoric part of the deformation is defined as
\begin{align}
\label{eq:Hill_strain_ich}
\tilde{\bm{E}} := \sum_{a=1}^{3} E (\tilde{\lambda}_a) \bm{M}_a.
\end{align}
Two rank-four tensors
\begin{align}
\label{eq:def_Q_tensor}
\mathbb Q  := 2 \frac{\partial \bm E}{\partial \bm C} \quad \mbox{and} \quad \tilde{\mathbb Q} := 2 \frac{\partial \tilde{\bm E}}{\partial \tilde{\bm C}}
\end{align}
and two rank-six tensors
\begin{align}
\label{eq:def_L_tensor}
\bm{\mathcal L} := 2 \frac{\partial \mathbb Q}{\partial \bm C} = 4 \frac{\partial^2 \bm E}{\partial \bm C \partial \bm C} \quad \mbox{and} \quad \tilde{\bm{\mathcal L}} := 2 \frac{\partial \tilde{\mathbb Q}}{\partial \tilde{\bm C}} = 4 \frac{\partial^2 \tilde{\bm E}}{\partial \tilde{\bm C} \partial \tilde{\bm C}}
\end{align}
are salubrious in the subsequent discussions. Their explicit representations in terms of principal stretches and principal directions are documented in \cite{Miehe2001b} and \cite{Liu2024}. It needs to be cautiously pointed out that $\mathbb Q$ and $\tilde{\mathbb Q}$ are not projectors, since $\mathbb Q : \mathbb Q \neq \mathbb Q$ and $\tilde{\mathbb Q} : \tilde{\mathbb Q} \neq \tilde{\mathbb Q}$, despite having occasionally been mistakenly referred to as projectors. Additionally, one may define the generalized Eulerian strains by replacing the self-dyads $\bm M_a$ in \eqref{eq:Hill_strain} and \eqref{eq:Hill_strain_ich} by $\bm n_a \otimes \bm n_a$. 

The crux of the above definitions is the scale function. \cite{Hill1979} demands the scale function to be at least twice differentiable and satisfy the following conditions
\begin{align}
\label{eq:E_property}
E' > 0, \quad E(1) = 0, \quad \mbox{and} \quad E'(1) = 1.
\end{align}
In this work, we use $E'$ to denote the first derivative for the univariate function $E$. The smoothness requirement is essential for the definition of the elasticity tensor. The monotonicity \eqref{eq:E_property}$_1$ is crucial for material stability and ensures the existence of the inverse function of $E$, which further guarantees the existence of $\mathbb Q^{-1}$ (see Lemma 1 of \cite{Liu2024}). \cite{Curnier1991} referred to this as the \textit{regularity condition}. The condition \eqref{eq:E_property}$_2$ states that the strain vanishes in the reference configuration. The condition \eqref{eq:E_property}$_3$ renders all curves tangent to each other at the undeformed state, making all measures coincide with the infinitesimal strain near the reference configuration. \cite{Ogden1997} referred to \eqref{eq:E_property}$_3$ as the \textit{normality condition}. As an example, the Green-Lagrange strain can be instantiated by choosing $E(\lambda) = (\lambda^2-1)/2$. A major drawback of this strain is that it achieves a finite value $-1/2$ when the stretch approaches zero. From the physical perspective, this is an anomalous behavior, and it is reasonable to expect strain to approach infinity under extreme deformation. To rectify this, an additional property, known as \textit{coerciveness}, is often demanded for newly proposed strains. It is mathematically represented as
\begin{align}
\label{eq:E_coerciveness}
\lim_{\lambda \rightarrow 0} E(\lambda)= -\infty \quad \mbox{and} \quad \lim_{\lambda \rightarrow \infty} E(\lambda) = +\infty.
\end{align}
With the condition \eqref{eq:E_coerciveness}, the scale function $E$ constitutes a bijection from $\mathbb R_{+}$ to $\mathbb R$. From the mathematical perspective, non-coercive strains suffer from a compatibility issue. Not every symmetric rank-two tensor can be represented as \eqref{eq:Hill_strain} if the range of the scale function $E$ is a strict subset of $\mathbb R$. For example, if a symmetric tensor has eigenvalues less than $-1/2$, it cannot be a Green-Lagrange strain. Therefore, coerciveness is a crucial property, and most generalized strains developed after the Seth-Hill strain family satisfy this (see Table \ref{table:list_of_gen_strains}). As will be discussed in Section \ref{sec:micromechanical-model}, this property plays a critical role in modeling inelasticity in our proposed framework. Certain generalized strains also satisfy the tension-compression symmetry condition,
\begin{align}
\label{eq:E_symmetry}
E(\lambda) = -E(\lambda^{-1}).
\end{align}
However, as noted by \cite{Moerman2016} and \cite{Du2020}, materials often exhibit asymmetric behavior under tension and compression. Therefore, the symmetry property \eqref{eq:E_symmetry} is not a universally appropriate requirement for strains, though it is a mathematically elegant feature for certain scale function choices.

\begin{table}[htbp]
\begin{center}
\tabcolsep=0.19cm
\renewcommand{\arraystretch}{1.6}
\begin{tabular}{P{2.0cm} P{4.0cm} P{2.5cm} P{1.8cm} P{1.5cm} P{3.0cm}  }
\hline
Name  & $E(\lambda)$ & Parameter(s) & Coerciveness & Symmetry & References  \\
\hline
Seth-Hill  & $\frac{1}{m}\left( \lambda^m -1\right)$ & $m\neq 0$ & N & N & \cite{Doyle1956,Seth1964} \\
Hencky & $\ln(\lambda)$ & - & Y & Y & \cite{Hencky1928} \\
Curnier-Rakotomanana & $\frac{1}{m+n}\left( \lambda^m - \lambda^{-n} \right)$ & $mn \neq 0$ & Y & N & \cite{Curnier1991}  \\
Curnier-Zysset & $\frac{2+m}{8}\lambda^2 - \frac{2-m}{8}\lambda^{-2} - \frac{m}{2}$  & $-2\le m \le 2$ & Y & N & \cite{Curnier2006}\\
Darijani-Naghdabadi &$\frac{1}{m+n} \left(e^{m(\lambda-1)} - e^{n(\lambda^{-1}-1)}\right)$ & $m,n>0$ & Y & Y & \cite{Darijani2013} \\
\hline
\end{tabular}
\end{center} 
\caption{Families of generalized strains.} 
\label{table:list_of_gen_strains}
\end{table}

Table \ref{table:list_of_gen_strains} presents several families of generalized strains, among which the most widely recognized is the Seth-Hill strain family, sometimes known as the Doyle-Ericksen family. It encompasses several widely used strains, including the Euler-Almansi ($m=-2$), Hill ($m=-1$), Biot ($m=1$), and Green-Lagrange ($m=2$) strains. The primary deficiency is its failure to satisfy the coercivity property \eqref{eq:E_coerciveness}, which has motivated the development of new generalized strains.

The Hencky strain, also known as the natural or logarithmic strain, is often considered a member of the Seth-Hill strain family by taking the limiting value of zero for the parameter. The use of the logarithmic function to define strain can be traced back to the works of Imbert and Ludwik in the 19th century. Introducing it into elasticity is nowadays credited to Hencky based on his 1928 article. Interested readers may refer to \cite{Martin2018} for further historical background.

In the original work of \cite{Curnier1991}, the parameters $m$ and $n$ were demanded to be integers and the formula $\left( \lambda^m - \lambda^{-n} \right)/(m+n)$ integrated various strains named after rubber elasticians (e.g. the Pelzer, Mooney, Wall, and Rivlin strains) into a unified strain family. It is thus also known as the rubber family. \cite{Darijani2010} extended the definition by allowing the two parameters to take real values with the same sign (i.e., $mn > 0$). The two parameters enable the separate characterization of material behavior under tension and compression, which makes this strain family particularly appealing. The study of \cite{Darijani2010b} also indicates that this generalized strain family provides a satisfactory quality of fit for a variety of soft materials. When $m=n$, the Ba\v{z}ant-Itskov strain family (\cite{Bazant1998,Itskov2004}) is recovered, with the scale function $\left(\lambda^m - \lambda^{-m} \right)/2m$.

More recently, the Curnier-Zysset and Darijani-Naghdabadi strain families have been introduced. The Curnier-Zysset strains in fact can be expressed directly in terms of $\bm C$, without calculating the spectral representation \eqref{eq:C_spectral}. \cite{Curnier2006} showed that their proposed generalized strain family can significantly expand the region where the model remains rank-one convex compared to the Green–Lagrange strain. The exponential strains proposed by \cite{Darijani2013} can be considered as an extension of the two-parameter Curnier–Rakotomanana strain family, utilizing the exponential function. It is potentially advantageous for characterizing the non-Gaussian behavior of rubber-like materials and the strain hardening observed in biological tissues (\cite{Dal2021}).

\subsection{Constitutive theory}
We start by deriving constitutive relations based on the Helmholtz free energy to establish our modeling strategy. This choice of the thermodynamic potential leads to a theory that works within the classical pure displacement formulation. We will then switch to using the Gibbs free energy, which leads to a mixed formulation and ensures the well-posedness of the model in both compressible and incompressible regimes. To simplify our presentation, we start by considering a single relaxation process. In this work, we adopt the following kinematic assumption, inspired by the work of \cite{Green1965} and \cite{Naghdi1990}.

\begin{assumption}
\label{as:Ev-existence}
There exists a symmetric rank-two tensor 
\begin{align*}
\bm E^{\mathrm v} \in \mathrm{Sym}(3)
\end{align*}
that serves as the internal variable characterizing the relaxation process. It has the same invariance property as the strain $\bm E$ under superimposed rigid body motion and enters into the thermodynamic potential through the term $\bm E - \bm E^{\mathrm v}$.
\end{assumption}

In the original proposal of \cite{Green1965}, the Green-Lagrange strain was invoked for the kinematic assumption $\bm E = \bm E^{\mathrm e} + \bm E^{\mathrm p}$, in which the plastic strain $\bm E^{\mathrm p} \in$ Sym(3) is the primitive quantity measuring the permanent deformation. Formally, it can be viewed as an additive decomposition of the strain $\bm E$ into the elastic part $\bm E^{\mathrm e}$ and plastic part $\bm E^{\mathrm p}$. This approach was inspired by the infinitesimal strain theory of elastoplasticity. \cite{Lee1969} pointed out that $\bm E - \bm E^{\mathrm p}$ is not necessarily a Green-Lagrange strain. In later works, \cite{Green1971} refrained from interpreting $\bm E^{\mathrm e}$ as a strain, and they constructed the theory by designing the thermodynamic potential as a function of $\bm E$ and $\bm E - \bm E^{\mathrm p}$ (\cite{Naghdi1990}). Therefore, the term $\bm E - \bm E^{\mathrm v}$ in Assumption \ref{as:Ev-existence} is understood in an abstract way that the internal variable $\bm E^{\mathrm v}$ enters into the constitutive theory in the particular form. This kinematic assumption, however, is abstract and lacks a clear physical interpretation, rendering it less popular than the multiplicative decomposition (\cite{Kroener1959,Lee1969}). Generalization of the Green-Naghdi plasticity theory has been made over the years by utilizing the Seth-Hill strain (\cite{Papadopoulos1998,Schroeder2002}) and the Hencky strain (\cite{Miehe1998,Miehe1998a}). The latter is also referred to as the additive plasticity in the logarithmic strain space.

\begin{remark}
According to the normality condition \eqref{eq:E_property}$_3$, all generalized strains $\bm E$ are compatible with the infinitesimal strain $\bm \varepsilon$ when the deformation is sufficiently small. If we formally introduce $\bm \varepsilon^{\mathrm v}$ as the infinitesimal internal variable corresponding to $\bm E^{\mathrm v}$, Assumption \ref{as:Ev-existence} recovers the kinematic setup for the standard linear solid model \cite[Chapter~10]{Simo2006}.
\end{remark}

Based on the isothermal condition, the Helmholtz free energy $\Psi$ is postulated as a function of $\bm C$ and $\bm E^{\mathrm v}$, that is, $\Psi = \Psi( \bm{C}, \bm{E}^{\mathrm v})$.  Given the definitions of generalized strains, the potential $\Psi$ can be equivalently expressed as a function of  $\bm E = \bm E(\bm C)$ and $\bm E^{\mathrm{v}}$. A number of experiments suggest the existence of distinct hyperelastic and time-dependent contributions of the energy stored in the material (\cite{Bergstroem1998,Wang2018}), leading to the following form of the free energy according to Assumption \ref{as:Ev-existence},
\begin{align}
\label{eq:Helmholtz_eq_neq}
\Psi(\bm C, \bm E^{\mathrm v}) = \Psi^{\infty}(\bm E(\bm C)) + \Upsilon(\bm E(\bm C) - \bm E^{\mathrm v}).
\end{align}
The superscript `$\infty$' represents the contribution arising from the equilibrium response of the material. The potential $\Upsilon$ is associated with the dissipative behavior and is known as the configurational free energy (\cite{Holzapfel2000,Liu2021b}) or the non-equilibrium part of the free energy (\cite{Reese1998}). We will henceforth use the superscripts `$\infty$' and `neq' to indicate quantities derived from $\Psi^{\infty}$ and $\Upsilon$, respectively. At this stage, we do not wish to interpret the term $\bm E - \bm E^{\mathrm v}$ as an elastic strain due to the same reason mentioned above and avoided the notation $\bm E^{\mathrm e}$ in \eqref{eq:Helmholtz_eq_neq}. The strain $\bm E$ can be any of the generalized strains given in Table \ref{table:list_of_gen_strains}. To derive the constitutive relation, we consider the Clausius-Planck inequality
\begin{align}
\label{eq:Clausius_Plank_inequality}
\mathcal D := \bm S : \frac12 \dot{\bm C} - \dot{\Psi} \geq 0,
\end{align}
which essentially demands the internal dissipation $\mathcal D$ to be non-negative. With the postulated free energy form \eqref{eq:Helmholtz_eq_neq}, the inequality can be expanded as
\begin{align*}
\mathcal{D} = \left( \bm S - 2\frac{\partial \Psi}{\partial \bm C} \right) : \frac12 \dot{\bm C} + \bm T^{\mathrm{neq}}  :  \dot{\bm E}^{\mathrm v} \geq 0,
\end{align*}
with
\begin{align}
\label{eq:def_Tneq}
\bm T^{\mathrm{neq}} :=  -\frac{\partial \Psi}{\partial \bm E^{\mathrm v}} =  -\frac{\partial \Upsilon}{\partial \bm E^{\mathrm v}} = \frac{\partial \Upsilon}{\partial \bm E},
\end{align}
which is known as the thermodynamic force or conjugate force associated with $\bm E^{\mathrm v}$. To ensure the satisfaction of the inequality \eqref{eq:Clausius_Plank_inequality} for arbitrary kinematic processes, we make the following choices,
\begin{align}
\label{eq:constitutive_S_Q}
\bm S &= 2 \frac{\partial \Psi}{\partial \bm C} = \bm S^{\infty} + \bm S^{\mathrm{neq}}, \qquad \mbox{with} \quad
\bm S^{\infty} = 2 \frac{\partial \Psi^{\infty}}{\partial \bm C} \quad \mbox{and} \quad
\bm S^{\mathrm{neq}} = 2 \frac{\partial \Upsilon}{\partial \bm C}.
\end{align}
Leveraging the rank-four tensor $\mathbb Q$ defined in \eqref{eq:def_Q_tensor}, we may alternatively represent the stresses as
\begin{align}
\label{eq:constitutive_S_eq_neq}
\bm S^{\infty} = \bm T^{\infty} : \mathbb Q \quad \mbox{and} \quad \bm S^{\mathrm{neq}} = \bm T^{\mathrm{neq}} : \mathbb Q,
\end{align}
in which $\bm T^{\infty}$ is defined as $\partial \Psi^{\infty}/\partial \bm E$. 

We postulate the existence of a dissipation potential $\Phi(\dot{\bm E}^{\mathrm v})$ such that
\begin{align}
\label{eq:dissipation_potential}
\Phi(\dot{\bm E}^{\mathrm v}) = \bm T^{\mathrm{neq}} : \dot{\bm E}^{\mathrm v}.
\end{align}
The dissipation potential is demanded to be non-negative and convex, and it characterizes the dissipative behavior of the material. Based on the maximum entropy production principle (\cite{Ziegler1983,Ziegler1987}), with a prescribed thermodynamic force $\bm T^{\mathrm{neq}}$, the arrangement of the internal variable maximize the dissipation $\bm T^{\mathrm{neq}} : \dot{\bm E}^{\mathrm v}$, subject to the condition $\bm T^{\mathrm{neq}} : \dot{\bm E}^{\mathrm v} = \Phi(\dot{\bm E}^{\mathrm v})$. This extreme value problem can be solved by considering
\begin{align}
\label{eq:maximum-dissipation}
\max_{(\dot{\bm E}^{\mathrm v}, \delta)} \left\lbrace \bm T^{\mathrm{neq}} : \dot{\bm E}^{\mathrm v} - \delta \left( \Phi(\dot{\bm E}^{\mathrm v}) -  \bm T^{\mathrm{neq}} : \dot{\bm E}^{\mathrm v} \right) \right\rbrace,
\end{align}
with $\delta$ introduced as a Lagrange multiplier. Through examining the optimality condition, we have
\begin{align}
\label{eq:Tneq_orthogonality_condition}
\bm T^{\mathrm{neq}} = \gamma \frac{\partial \Phi}{\partial \dot{\bm E}^{\mathrm v}}, \quad \gamma := \frac{\delta}{1+\delta}, \quad \mbox{and} \quad \Phi(\dot{\bm E}^{\mathrm v}) = \gamma \frac{\partial \Phi}{\partial \dot{\bm E}^{\mathrm v}} : \dot{\bm E}^{\mathrm v}.
\end{align}
The multiplier can be determined from \eqref{eq:Tneq_orthogonality_condition}$_3$. If the multiplier $\delta$ is positive, the convexity of $\Phi$ guarantees that the optimality condition \eqref{eq:Tneq_orthogonality_condition} gives the maximized entropy production. The condition \eqref{eq:Tneq_orthogonality_condition}$_1$ is geometrically interpreted as an orthogonality condition, stating the thermodynamic force $\bm T^{\mathrm{neq}}$ is orthogonal to the plane tangent to the dissipation surface $\Phi(\dot{\bm E}^{\mathrm v}) = \Phi_0$. Combining \eqref{eq:def_Tneq} and \eqref{eq:Tneq_orthogonality_condition}, the evolution equation of the internal variables is given by
\begin{align}
\label{eq:evolution-eqn-psi-phi}
\frac{\partial \Upsilon}{\partial \bm E^{\mathrm v}} + \gamma \frac{\partial \Phi}{\partial \dot{\bm E}^{\mathrm v}} = \bm O,
\end{align}
with a proper initial condition of $\bm E^{\mathrm v}$. Noticing the definition of $\bm T^{\mathrm{neq}}$ given in \eqref{eq:def_Tneq}, the above can be equivalently expressed as
\begin{align}
\label{eq:evolution-eqn-T-Phi}
\bm T^{\mathrm{neq}} = \gamma \frac{\partial \Phi}{\partial \dot{\bm E}^{\mathrm v}}.
\end{align}
The derivation of the constitutive relations relies on the introduction of two potential functions, that is, the free energy $\Psi$ and the dissipative potential $\Phi$. The evolution equation for the internal variable is derived based on the principle of maximum dissipation \eqref{eq:maximum-dissipation}. This approach, known as the framework of generalized thermodynamics, provides a unified rational approach for modeling dissipative material behaviors (\cite{Zhan2023,Houlsby2000,Martyushev2006,Flaschel2023}). We notice that the condition \eqref{eq:Tneq_orthogonality_condition}$_3$ essentially demands the dissipative potential as a homogeneous function of $\dot{\bm E}^{\mathrm v}$. Due to the convexity of the dissipation potential, one may introduce its dual $\Phi^{*}(\bm T^{\mathrm{neq}})$ by performing the Legendre transformation, and \eqref{eq:evolution-eqn-T-Phi} leads to
\begin{align}
\label{eq:evolution-eqn-phi-star}
\dot{\bm E}^{\mathrm v} = \gamma \frac{\partial \Phi^{*}}{\partial \bm T^{\mathrm{neq}}}.
\end{align}
This representation of the evolution equation can be viewed as a normality rule for the viscous strain $\bm E^{\mathrm v}$.

In this study, we restrict our discussion to the following quadratic form of the dissipation potential,
\begin{align}
\label{eq:dissipation_potential_quad_form}
\Phi(\dot{\bm E}^{\mathrm v}) = \dot{\bm E}^{\mathrm v} : \mathbb V : \dot{\bm E}^{\mathrm v}.
\end{align}
In the above, $\mathbb V$ is a rank-four viscosity tensor, which is symmetric and positive semi-definite. We postulate that its inverse $\mathbb V^{-1}$ exists, satisfying $\mathbb V : \mathbb V^{-1} = \mathbb V^{-1} : \mathbb V = \mathbb I$. Noticing that the above $\Phi$ is a homogeneous function of degree $2$, we immediately have $\gamma = 1/2$ and $\delta = 1$. Then the evolution equation \eqref{eq:evolution-eqn-T-Phi} simplifies to
\begin{align}
\label{eq:constitutive_evo_eqn}
\bm T^{\mathrm{neq}} = \mathbb V : \dot{\bm E}^{\mathrm v}.
\end{align}
Given the existence of $\mathbb V^{-1}$, the above relation is expressed as
\begin{align}
\label{eq:evolution_equation}
 \dot{\bm E}^{\mathrm v} = \mathbb V^{-1} : \bm T^{\mathrm{neq}},
\end{align}
which can also be derived from \eqref{eq:evolution-eqn-phi-star}. With the above constitutive relations, the internal dissipation can be explicitly represented as
\begin{align}
\label{eq:helmholtz-dissipation}
\mathcal D = \dot{\bm E}^{\mathrm v}  : \mathbb V : \dot{\bm E}^{\mathrm v}  = \bm T^{\mathrm{neq}} : \mathbb V^{-1} : \bm T^{\mathrm{neq}},
\end{align}
which remains non-negative. The quadratic form of the dissipation potential $\Phi$ as well as the constitutive relations \eqref{eq:constitutive_evo_eqn} and \eqref{eq:evolution_equation} can be viewed as an application of the reciprocal principle of \cite{Onsager1931}.

The \textit{thermodynamic equilibrium state} is defined as the state when the rate of change of the internal variable is zero, i.e.,
\begin{align}
\label{eq:def-thermdynamic-equilibrium-state}
\dot{\bm E}^{\mathrm v} = \bm O.
\end{align}
It is straightforward to obtain the vanishment of both $\bm T^{\mathrm{neq}}$ and $\bm S^{\mathrm{neq} }$ at the thermodynamic equilibrium state, according to the relations \eqref{eq:constitutive_evo_eqn} and \eqref{eq:constitutive_S_eq_neq}$_2$. This implies the stress on the non-equilibrium branch gets fully relaxed in the thermodynamic equilibrium state. This property is physically reasonable and guarantees the model to be well-behaved. Readers may refer to Remark 1 of \cite{Reese1998} and Section 2.4.3 of \cite{Liu2021b} for discussions on this property. Models not satisfying this property can exhibit pathological behavior, see Section 4.1 of \cite{Liu2021b}. In our modeling framework, it is Assumption \ref{as:Ev-existence} that guarantees this property.

\begin{remark}
The derivation of the constitutive relations starts with the prescription of the free energy $\Psi$ and the dissipation potential $\Phi$. The Clausius-Plank inequality and the maximum entropy production principle result in the constitutive relations. This approach falls into the framework of generalized standard materials. It offers a rational and elegant way for modeling different inelastic material behaviors, including plasticity (\cite{Hackl1997,Houlsby2000}), damage (\cite{Murakami2012,Zhan2023}), etc. For related development in viscoelasticity, one may refer to \cite{Kumar2016,Sadik2024}. The particular form of the dissipation potential $\Phi$ given in \eqref{eq:dissipation_potential_quad_form} is sufficient for the purposes of this work. It can be further generalized by postulating a more general form of $\Phi$, allowing it to depend on $\bm C$, $\dot{\bm C}$, and other state variables. Such generalization allows the incorporation of non-Newtonian effects and additional dissipative mechanisms.
\end{remark}

\subsection{Treatment of compressibility}
\label{sec:treatment-of-compressibility}
In many cases, materials exhibit distinct behaviors under isochoric and volumetric deformations. In this work, compressibility is taken into account through the following decoupled form,
\begin{align}
\label{eq:helmholtz-energy-additive-form-2}
\Psi(\bm C, \bm E^{\mathrm v}) =  \Psi_{\mathrm{a}}(\bm C, \bm E^{\mathrm v}) + \Psi_{\mathrm{vol}}(J), \quad \mbox{with} \quad \Psi_{\mathrm{a}}(\bm C, \bm E^{\mathrm v}) = \Psi^{\infty}_{\mathrm{a}}(\bm C) + \Upsilon(\bm E(\bm C) - \bm E^{\mathrm v}).
\end{align}
Notably, there is no involvement of the internal variable in $\Psi_{\mathrm{vol}}$ in the energy form \eqref{eq:helmholtz-energy-additive-form-2}, implying that the bulk response is purely elastic. This choice is based on the observation of typical soft materials, whose bulk viscous effect is several orders of magnitude narrower than the viscous effect in shear deformation (\cite{Ferry1980}). Readers may refer to \cite{Gouhier2024} for a discussion on modeling bulk viscous effect using the multiplicative decomposition. The two components, $\Psi_{\mathrm a}$ and $\Psi_{\mathrm{vol}}$, in \eqref{eq:helmholtz-energy-additive-form-2} have different physical origins. The former arises due to the configurational entropy, while the latter is due to the internal energy, respectively (\cite{Bischoff2001}). The energy $\Psi_{\mathrm{vol}}$ characterizes the stiff material behavior in bulk deformation. 

A drawback of the energy form \eqref{eq:helmholtz-energy-additive-form-2} is its restriction to the compressible regime. Specifically, the energy $\Psi_{\mathrm{vol}}$ becomes an infinity-times-zero indeterminate in the incompressible limit. To address this singularity problem, we perform a Legendre transformation on the bulk energy to inspire a saddle-point theory with the pressure or a pressure-like variable entering as an independent variable. The resulting energy $G_{\mathrm{vol}}$ is defined as
\begin{align}
\label{eq:legendre-transformation}
G_{\mathrm{vol}}(P) := \inf_{J} \left \{ \Psi_{\mathrm{vol}}(J) + PJ \right \}.
\end{align}
In the above, the \textit{pressure-like} variable $P$ is defined as a conjugate variable to $-J$ with respect to $\Psi_{\mathrm{vol}}$. We may further introduce the Gibbs free energy as
\begin{align}
\label{eq:Gibbs-a-b}
G(\bm C, P, \bm E^{\mathrm v}) :=  G_{\mathrm a}(\bm C, \bm E^{\mathrm v}) + G_{\mathrm{vol}}(P), \quad \mbox{with} \quad G_{\mathrm{a}} (\bm C, \bm E^{\mathrm v}) := G^{\infty}_{\mathrm{a}}(\bm C) + \Upsilon(\bm E - \bm E^{\mathrm v}),
\end{align}
in which $G^{\infty}_{\mathrm{a}}(\bm C) := \Psi^{\infty}_{\mathrm{a}}(\bm C)$ is introduced to maintain notation consistency. It is important to note that the energy $\Psi_{\mathrm a}$ is in fact unaffected by the Legendre transformation. Due to the transformed free energy, the representation of the internal dissipation is written as
\begin{align*}
\mathcal{D} &= \bm{S} : \frac12 \dot{\bm C} - \dot{G} + \dot{P}J + P\dot{J} = \left( \bm S - \bm S^{\infty}_{\mathrm{a}} - \bm S^{\mathrm{neq}}_{\mathrm{a}} + P J \bm C^{-1} \right) : \frac12 \dot{\bm C} +  \left( J - \frac{d{G_{\mathrm{vol}}}}{d P} \right) \dot{P} + \bm T^{\mathrm{neq}}  :  \dot{\bm E}^{\mathrm v},
\end{align*}
with 
\begin{align}
\label{eq:Gibbs_a_vol_Sa}
\bm{S}^{\infty}_{\mathrm{a}} := 2\frac{\partial G_{\mathrm{a}}^{\infty}}{\partial \bm{C}} \quad \mbox{and} \quad \bm{S}_{\mathrm{a}}^{\mathrm{neq}} := 2\frac{\partial \Upsilon}{\partial \bm{C}} = \bm T^{\mathrm{neq}} : \mathbb Q.
\end{align}
Inspired by the above representation of $\mathcal D$, we make the following choices
\begin{align}
\label{eq:Gibbs_a_vol_constitutive_relation}
\rho = \rho_0 \left( \frac{d{G_{\mathrm{vol}}}}{d P} \right)^{-1}, \quad
\bm S = \bm S_{\mathrm{a}} + \bm S_{\mathrm{vol}}  =  \bm S_{\mathrm{a}}^{\infty} + \bm S_{\mathrm{a}}^{\mathrm{neq}} + \bm S_{\mathrm{vol}}, \quad \bm S_{\mathrm{vol}} = -J P \bm C^{-1},
\end{align}
and the evolution equation remains identical to \eqref{eq:evolution-eqn-psi-phi}. We mention that $p := P \circ \bm \varphi_t^{-1}$ is only part of the hydrostatic pressure of the Cauchy stress since $\bm S_{\mathrm a} : \bm C$ is not necessarily zero. We therefore cautiously refer to $P$ as a pressure-like variable here. Table \ref{table:vol_energy} lists commonly used volumetric energies $\Psi_{\mathrm{vol}}$, their counterparts $G_{\mathrm{vol}}$, and related constitutive relations. Adopting the Gibbs free energy as the outset offers the advantage of a unified description for both incompressible and compressible models, as demonstrated in the first row of Table \ref{table:vol_energy}. Additional details on the relationship between $\Psi_{\mathrm{vol}}$ and $G_{\mathrm{vol}}$ are provided in Appendix \ref{appendix:vol-energy}.

\begin{table}[htbp]
\begin{center}
\tabcolsep=0.19cm
\renewcommand{\arraystretch}{1.6}
\begin{tabular}{P{2.0cm} P{3.0cm} P{3.0cm} P{2.0cm} P{2.0cm} P{3.0cm} }
\hline
Name & $\Psi_{\mathrm{vol}}$ & $G_{\mathrm{vol}}$ & $\rho$ & $\beta$ & Reference   \\
\hline
Incompressible model & - & $P$ & $\rho_0$ & $0$ & - \\
Quadratic model &  $\kappa (J-1)^2/2$ & $P - \frac{P^2}{2\kappa}$ & $\rho_0 \left( 1 - \frac{P}{\kappa} \right)^{-1}$ &  $(\kappa - P)^{-1}$ & \cite{Peng1975} \\
ST91 model & $\frac{\kappa}{4} \left( J^2 -2\ln(J) - 1 \right)$ & $\frac{P\sqrt{P^2+\kappa^2}-P^2}{2\kappa} - \frac{\kappa}{2} \ln( \frac{\sqrt{P^2+\kappa^2}-P}{\kappa})$ & $\rho_0 \frac{\sqrt{P^2+\kappa^2} + P}{\kappa}$ & $(P^2+\kappa^2)^{-\frac12}$ & \cite{Simo1991} \\
M94 model & $\kappa (J - \ln(J) - 1)$ & $ \kappa (\ln(P+\kappa) -\ln \kappa)$ & $\rho_0(1 + \frac{P}{\kappa})$ & $(P+\kappa)^{-1}$ & \cite{Miehe1994} \\
L94 model & $\kappa (J \ln(J)-J+1)$ & $\kappa(1-e^{-\frac{P}{\kappa}})$ & $\rho_0 e^{P/\kappa}$ & $1/\kappa$ & \cite{Liu1994} \\
\hline
\end{tabular}
\end{center} 
\caption{A list of volumetric energies $\Psi_{\mathrm{vol}}$, the corresponding Gibbs volumetric energies $G_{\mathrm{vol}}$, and the constitutive relations for the density $\rho$ and isothermal compressibility factor $\beta := (d\rho/dP)/\rho$.} 
\label{table:vol_energy}
\end{table}

Oftentimes, there is a more refined structure of the energy $G_{\mathrm{a}}$ in that it can be written in the following form,
\begin{align}
\label{eq:Gibbs-energy-additive-form}
G_{\mathrm{a}}(\bm C, \bm E^{\mathrm v}) = G^{\infty}_{\mathrm{ich}}(\tilde{\bm C}) + \Upsilon(\tilde{\bm E} - \bm E^{\mathrm v}).
\end{align}
Theoretical arguments have been presented in support of this energy split, particularly for isotropic elastic materials (\cite{Sansour2008,Liu2018}). With the energy form \eqref{eq:Gibbs-energy-additive-form}, the stresses are represented as
\begin{align}
\label{eq:Gibbs_ich_vol_constitutive_relation}
\bm S = \bm S_{\mathrm{ich}} + \bm S_{\mathrm{vol}} =  \bm S_{\mathrm{ich}}^{\infty} + \bm S_{\mathrm{ich}}^{\mathrm{neq}} + \bm S_{\mathrm{vol}}, \quad
\bm S_{\mathrm{ich}}^{\infty} = J^{-\frac{2}{3}} \mathbb{P} : \tilde{\bm{S}}^{\infty}_{\mathrm{iso}}, \quad
\bm S_{\mathrm{ich}}^{\mathrm{neq}} = J^{-\frac{2}{3}} \mathbb{P} : \tilde{\bm{S}}^{\mathrm{neq}}_{\mathrm{iso}}, \quad
\bm S_{\mathrm{vol}} = -J P \bm C^{-1},
\end{align}
with
\begin{align}
\label{eq:tilde_S_infty_ich}
\tilde{\bm{S}}^{\infty}_{\mathrm{ich}} := 2\frac{\partial G_{\mathrm{ich}}^{\infty}}{\partial \tilde{\bm{C}}},
\quad
\tilde{\bm{S}}_{\mathrm{ich}}^{\mathrm{neq}} := 2\frac{\partial \Upsilon}{\partial \tilde{\bm{C}}} = \bm T^{\mathrm{neq}} : \tilde{\mathbb Q},
\quad \mbox{and}\quad 
\mathbb P := \mathbb I - \frac13 \bm C^{-1} \otimes \bm C.
\end{align}
In \eqref{eq:tilde_S_infty_ich}$_2$, we invoked the definition of $\bm T^{\mathrm{neq}}$ in \eqref{eq:def_Tneq} and noticed that $\bm T^{\mathrm{neq}} = \partial \Upsilon / \partial \tilde{\bm E}$ due to the energy form \eqref{eq:Gibbs-energy-additive-form} here. It can be shown that now $p$ is indeed the hydrostatic pressure of the Cauchy stress. We hereafter refer to $G=G_{\mathrm a} + G_{\mathrm{vol}}$ as the isochoric-volumetric decoupled form of the energy if $G_{\mathrm a}$ takes the form of \eqref{eq:Gibbs-energy-additive-form}.

\begin{remark}
Despite its elegance, the form \eqref{eq:Gibbs-energy-additive-form} does not capture all material behaviors, such as those of anisotropic materials. A convenient resolution is to incorporate the full strain tensor in the anisotropic contribution (\cite{Gueltekin2019,Helfenstein2010,Nolan2014}). In the original work of \cite{Flory1961}, compressibility was in fact treated by appending a bulk energy term without invoking the unimodular part of deformation, which inherently incorporates the ideas similar to those introduced for handling anisotropic materials. We therefore consider both the refined energy form \eqref{eq:Gibbs-energy-additive-form} as well as a general energy form \eqref{eq:Gibbs-a-b} in this study. 
\end{remark}

\subsection{Generalization to multiple relaxation processes}
\label{sec:gen-multiple-relax-process}
We extend the previous discussion to accommodate multiple relaxation processes, specifically considering the isochoric-volumetric decoupled form due to \eqref{eq:Gibbs-energy-additive-form}. Assuming the existence of $M$ independent internal variables $\bm E^{\mathrm v}_{\alpha}$ with $1\leq \alpha \leq M$, each associated with a positive semi-definite viscosity tensor $\mathbb V_{\alpha}$, the configurational energy can be expressed as
\begin{align*}
\Upsilon(\tilde{\bm C}, \bm E^{\mathrm v}_{1}, \cdots, \bm E^{\mathrm v}_{M}) := \sum_{\alpha=1}^{M} \Upsilon_{\alpha}(\tilde{\bm E}_{\alpha} - \bm E^{\mathrm v}_{\alpha}).
\end{align*}
The rationale for this formulation stems from the rheological model illustrated in Figure \ref{fig:multiple_relaxation}, where multiple Maxwell elements are arranged in parallel with an elastic spring. The non-equilibrium stress is generalized to
\begin{align*}
\bm S^{\mathrm{neq}}_{\mathrm{ich}} = J^{-\frac23} \mathbb P : \tilde{\bm S}_{\mathrm{ich}}^{\mathrm{neq}}, \quad \mbox{with} \quad
\tilde{\bm S}_{\mathrm{ich}}^{\mathrm{neq}} = \sum_{\alpha=1}^{M} \bm T^{\mathrm{neq}}_{\alpha} : \tilde{\mathbb Q}_{\alpha}, \quad \bm T^{\mathrm{neq}}_{\alpha} := - \frac{\partial \Upsilon_{\alpha}}{\partial \bm E^{\mathrm v}_{\alpha}} = \frac{\partial \Upsilon_{\alpha}}{\partial \tilde{\bm E}_{\alpha}}, \quad \mbox{and} \quad \tilde{\mathbb Q}_{\alpha} := 2 \frac{\partial \tilde{\bm E}_{\alpha}}{\partial \tilde{\bm C}}.
\end{align*}
There are $M$ evolution equations associated with each non-equilibrium process,
\begin{align}
\label{eq:evo-multiple-relax-process}
\bm T^{\mathrm{neq}}_{\alpha} = \mathbb V_{\alpha} : \dot{\bm E}^{\mathrm v}_{\alpha}, \qquad \mbox{for} \quad 1 \leq \alpha \leq M.
\end{align}
The total dissipation is
\begin{align*}
\mathcal D = \sum_{\alpha=1}^{M} \dot{\bm E}_{\alpha}^{\mathrm v} : \mathbb V_{\alpha} : \dot{\bm E}_{\alpha}^{\mathrm v} = \sum_{\alpha=1}^{M} \bm T_{\alpha}^{\mathrm{neq}} : \mathbb V_{\alpha}^{-1} : \bm T_{\alpha}^{\mathrm{neq}},
\end{align*}
and the positive semi-definiteness of the viscosity tensors ensure the non-negativity of $\mathcal D$. At the thermodynamic equilibrium state, $\dot{\bm E}^{\mathrm v}_{\alpha} = \bm O$ for $1 \leq \alpha \leq M$, leading to the relaxation of $\bm S^{\mathrm{neq}}_{\mathrm{ich}}$ and zero dissipation.

\begin{figure}
  \begin{center}
    \includegraphics[angle=0, trim=180 115 180 70, clip=true, scale = 0.45]{./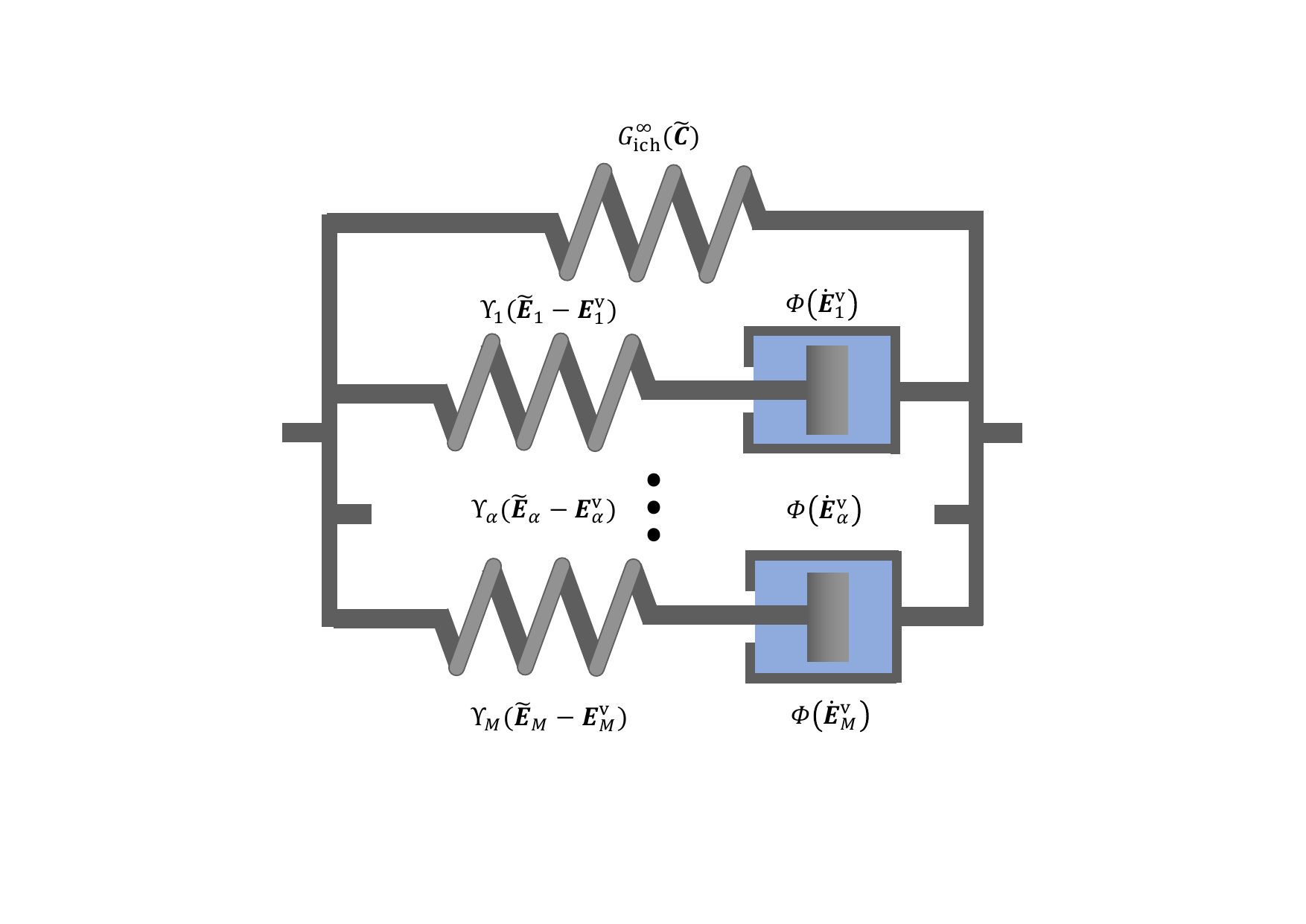}
  \end{center}
  \caption{An illustration of the multiple relaxation processes by the spring-dashpot device.}
  \label{fig:multiple_relaxation}
\end{figure}

\begin{remark}
This study develops a framework based on the spring-dashpot rheological model illustrated in Figure \ref{fig:multiple_relaxation}, commonly known as the generalized Maxwell model. An alternative approach, the generalized Kelvin-Voigt model, has not been widely studied in the literature (\cite{Huber2000,Laiarinandrasana2003}). Its limited popularity may be attributed to challenges in employing the multiplicative decomposition when incorporating multiple relaxation processes, as managing multiple intermediate configurations can be cumbersome. We feel the kinematic assumption proposed in this work offers an effective approach to model the generalized Kelvin-Voigt model at finite strains, and it remains to be thoroughly investigated in our future study.
\end{remark}

\section{Choice of internal variable and connection with existing models}
\label{sec:choice-of-ISV}
In the previous section, a general modeling framework has been established based on the Green-Naghdi kinematic assumption, with $\bm E^{\mathrm v} \in$ Sym(3) chosen as the internal variable. Here we discuss alternative models generated by different assumptions and examine their connection to the proposed framework. To simplify our discussion, we consider a single relaxation process within this section. 

\subsection{Two alternative internal variable choices}
In this part, we discuss the choice of the internal variables and emphasize their role in shaping the constitutive theory. One choice is based on the multiplicative decomposition with the internal variable being a member of GL(3)$_{+}$. Another choice is based on the Green-Naghdi assumption with the internal variable belonging to Sym(3)$_{+}$.

\paragraph{Multiplicative decomposition of \cite{Sidoroff1974}} We begin by discussing the theory based on the multiplicative decomposition, and we use the bar symbol $\bar{(\cdot)}$ to represent quantities associated with the multiplicative decomposition, e.g., $\bm F = \bar{\bm F}^{\mathrm e} \bar{\bm F}^{\mathrm v}$ and $\bar{\bm C}^{\mathrm e} = \bar{\bm F}^{\mathrm e \: T} \bar{\bm F}^{\mathrm e}$. \cite{Tallec1993} assumed the intermediate configuration does not undergo rotation and chose $\bar{\bm C}^{\mathrm v} := \bar{\bm F}^{\mathrm v \: T} \bar{\bm F}^{\mathrm v} = \bar{\bm F}^{\mathrm v \: 2}$ to characterize the viscous behavior. The Helmholtz free energy was represented as $\Psi(\bm C, \bar{\bm C}^{\mathrm e})$, while the dissipation potential was proposed in the form
\begin{align*}
\Phi(\bar{\bm C}^{\mathrm v}, \dot{\bar{\bm C}}^{\mathrm v}) = \eta \bar{\bm d}^{\mathrm v} : \bar{\bm d}^{\mathrm v}, \quad \mbox{with} \quad \bar{\bm d}^{\mathrm v} := \frac12 \bar{\bm F}^{\mathrm v \: -T} \dot{\bar{\bm C}}^{\mathrm v} \bar{\bm F}^{\mathrm v \: -1}
\end{align*}
where $\eta$ is a viscosity-like modulus. Interestingly, this dissipation potential is akin to the dissipation of viscous fluids. The evolution equation of \cite{Tallec1993} can be derived as
\begin{align}
\label{eq:Tallec_evolution_eqn}
\frac{\partial \Psi}{\partial \bar{\bm C}^{\mathrm v}} + \frac12 \frac{\partial \Phi}{\partial \dot{\bar{\bm C}}^{\mathrm v}} = 0.
\end{align}
Leveraging isotropy of the free energy $\Psi$, a more explicit form of the evolution equation can be obtained (\cite{Gouhier2024,Tallec1993}). \cite{Gouhier2024} also demonstrated that the models of \cite{Lion1997,Tallec1993,Reese1998} share the same evolution equations and are written on the intermediate, referential, and current configurations, respectively. Given the fact they adopted the same kinematic assumptions, it is not so surprising that the three ostensibly different models are identical. This observation highlights the importance of kinematic assumption, as it essentially dictates the form of the evolution equation.

\paragraph{Green-Naghdi assumption with an internal variable in Sym(3)$_{+}$} \cite{Liu2024} developed a constitutive theory by positing the existence of an internal variable 
\begin{align*}
\bm \Gamma \in \mathrm{Sym}(3)_{+},
\end{align*}
whose role is analogous to the deformation tensor $\bm C$. That choice drew inspiration from the work of \cite{Holzapfel1996} and the concept of \textit{plastic metric} proposed by Miehe (\cite{Miehe1998a,Miehe2000}). Based on the spectral decomposition of $\bm \Gamma$, a viscous strain $\bm E^{\mathrm v}$ can be defined by invoking the generalized strain concept \eqref{eq:Hill_strain}. The strain energy was constructed following an assumption analogous to Assumption \ref{as:Ev-existence}, with $\bm E(\bm C) - \bm E^{\mathrm v}(\bm \Gamma)$ entering into the configurational free energy. From the general framework, the work conjugate variable is defined as $\bm Q := -2 \partial \Psi / \partial \bm \Gamma = -2 \partial \Upsilon / \partial \bm \Gamma$ and the evolution equation is given by
\begin{align}
\label{eq:gamma_evolution_eqn}
\bm Q = \frac12 \mathbb V : \dot{\bm \Gamma}.
\end{align}
The relation between $\bm \Gamma$ and $\bm E^{\mathrm v}$ induces a rank-four tensor $\mathbb Q^{\mathrm v} := 2 \partial \bm E^{\mathrm v}/\partial \bm \Gamma$, and it can be shown that $\bm Q = \bm T^{\mathrm{neq}} : \mathbb Q^{\mathrm v}$, with $\bm T^{\mathrm{neq}}$ being defined consistently as \eqref{eq:def_Tneq}. \cite{Liu2024} showed that the non-equilibrium stress vanishes in the thermodynamic equilibrium state, i.e., the state with $\dot{\bm \Gamma} = \bm O$. Despite the similarities with the proposed model, the evolution equation \eqref{eq:gamma_evolution_eqn} in fact can be reformulated as
\begin{align}
\label{eq:gamma_evolution_eqn_detail}
\bm T^{\mathrm{neq}} = \mathbb V : \left( \mathbb Q^{\mathrm v \: -1} : \dot{\bm E}^{\mathrm v} : \mathbb Q^{\mathrm v \: -1} \right),
\end{align}
which differs from the evolution equation \eqref{eq:evolution_equation} proposed in this work. The distinction lies in the presence of two $\mathbb Q^{\mathrm v \: -1}$ acting on both sides of $\dot{\bm E}^{\mathrm v}$. Although both frameworks adopt the Green-Naghdi kinematic assumption, the difference stems from the choice of the internal variable. Unless $\mathbb Q^{\mathrm v} = \mathbb I$, the evolution equation \eqref{eq:gamma_evolution_eqn_detail} is inherently nonlinear. Additionally, adopting an internal variable in Sym(3)$_{+}$ demands special consideration in integrating the constitutive relation, as most integration schemes fail to preserve the positive definiteness (\cite[p.~74]{Simo1992e}).

\subsection{Connection with existing models}
\label{sec:connection-with-existing-models}
In the second part of this section, we discuss three finite viscoelasticity models in the literature and reveal how they are related to the framework developed in Section \ref{sec:Kinematics}. This reveals the intricate connection among different models and paves the way for our subsequent discussions. 

\paragraph{The identical polymer chain model}
Among different instantiations of Simo's model, a particularly relevant and important model is the identical polymer chain model (\cite{Govindjee1992,Holzapfel1996}), which uses a scaled hyperelastic stress to drive the dissipative evolution equation. It is inspired by the phenomenological observation that the viscous effect is induced in a medium composed of the same polymer chains. \cite{Liu2021b} analyzed its thermomechanical foundation and provided its configurational free energy as
\begin{align}
\label{eq:FLV-Upsilon}
\Upsilon(\bm C, \bm \Gamma) = \frac{1}{4\mu} \left\lvert 2 \frac{\partial \mathcal G(\tilde{\bm C})}{\partial \tilde{\bm C}} - \mu(\bm \Gamma - \bm I) - \hat{\bm S}_0 \right\rvert^2.
\end{align}
The constant tensor $\hat{\bm S}_0$ specifies the initial value of $\bm Q= -2 \partial \Upsilon / \partial \bm \Gamma$, and $\mu$ is the shear modulus associated with the relaxation process. The above configurational free energy does not fall into the form of $\Upsilon(\bm E - \bm E^{\mathrm v})$, and its internal variable is $\bm \Gamma \in$ Sym(3)$_{+}$. The relaxation property of the non-equilibrium stress deserves special analysis and was confirmed by \cite{Liu2021b}. Following the identical polymer chain assumption, the energy-like function $\mathcal G$ in \eqref{eq:FLV-Upsilon} is specified as $\mathcal G(\tilde{\bm C}) = \beta G^{\infty}_{\mathrm{ich}}(\tilde{\bm C})$,
with $\beta \in \mathbb R_{+}$ being a dimensionless scaling factor. It can be shown that the evolution equation for this model is given by $\eta \dot{\bm \Gamma} = \beta \tilde{\bm S}^{\infty}_{\mathrm{ich}} - \hat{\bm S}_0 - \mu (\bm \Gamma - \bm I)$, in which $\tilde{\bm S}^{\infty}_{\mathrm{ich}} = 2 \partial G^{\infty}_{\mathrm{ich}} / \partial \tilde{\bm C}$. As was pointed out, integrating this equation is non-trivial as one has to preserve the positiveness of the internal variable in the discrete scheme. Taking a time derivative at both sides of the equation results in
\begin{align}
\label{eq:FLV-evo-eqn}
\dot{\bm Q} + \frac{\bm Q}{\tau} = \beta \dot{\tilde{\bm S}}^{\infty}_{\mathrm{ich}}.
\end{align}
Given an initial condition $\bm Q|_{t=0} = \bm Q_0$, the solution can be represented in a hereditary integral form,
\begin{align}
\label{eq:FLV-hereditary-integral}
\bm Q = \exp\left( - t / \tau \right) \bm Q_0 + \int^t_{0^+} \exp\left( -(t-s)/\tau \right) \beta \frac{d}{ds} \left( \tilde{\bm S}^{\infty}_{\mathrm{ich}} \right) ds.
\end{align}
The hereditary integral inspires a recurrence formula for the constitutive integration, rendering the algorithm implementation quite convenient (see, e.g. \cite[Chapter~10]{Simo2006}). With the identical polymer chain assumption, one may account for anisotropy by introducing structural tensors into the definition of $\mathcal G$ (\cite{Holzapfel2001,Wollner2023}). 

\begin{remark}
If $\hat{\bm S}_0 = \bm O$ and the energy-like function $\mathcal G$ is modeled as $\mathcal G = \mu | (\tilde{\bm C} - \bm I)/2 |^2$, the configurational free energy \eqref{eq:FLV-Upsilon} can be reorganized as $\Upsilon(\tilde{\bm C}, \bm \Gamma) = \mu |\tilde{\bm E}(\tilde{\bm C}) - \bm E^{\mathrm v}(\bm \Gamma)|^2$, where both strains, $\tilde{\bm E}$ and $\bm E^{\mathrm v}$, are of the Green-Lagrange type. This corresponds exactly to the energy mentioned in Section \ref{sec:introduction} if we set $\mu = \mu^{\mathrm{neq}}/2$.
\end{remark}

\paragraph{The model of \cite{Green1946}} The outset of our derivation is based on the existence of the internal variable $\bm E^{\mathrm v}$, and we do not consider it to be defined as a tensorial function of a symmetric positive tensor. Nevertheless, here we temporarily regard $\bm E^{\mathrm v}$ defined as a generalized strain of $\bm \Gamma \in$ Sym(3)$_{+}$ and assume the configurational free energy in a quadratic form,
\begin{align*}
\Upsilon(\bm E, \bm E^{\mathrm v}) = \frac{\mu^\mathrm{neq}}{2} \left\lvert \bm E - \bm E^{\mathrm v} \right\rvert^2.
\end{align*}
Then, the evolution equation \eqref{eq:evolution_equation} can be written as
\begin{align*}
\dot{\bm E}^{\mathrm v} = \frac{\mu^{\mathrm{neq}}}{\eta} \left( \bm E - \bm E^{\mathrm v} \right).
\end{align*}
If both $\bm E$ and $\bm E^{\mathrm v}$ are chosen as the Euler-Almansi strain, the above evolution equation becomes
\begin{align*}
\frac{d}{dt}\bm \Gamma^{-1} = \frac{\mu^{\mathrm{neq}}}{\eta} \left( \bm C^{-1} - \bm \Gamma^{-1} \right),
\end{align*}
which is the evolution equation of the model proposed by \cite{Green1946}. \cite{Vernerey2017} proposed a kinetic theory that recovers the form of the above evolution equation and the internal variable can be interpreted as a tensor characterizing the chain distribution. \cite{Lubliner1985} invoked the multiplicative decomposition, adopted the above evolution equation, and heuristically interpreted the internal variable as $\bar{\bm C}_{\mathrm v}$. His evolution equation
\begin{align*}
\frac{d}{dt}\bar{\bm C}_{\mathrm v}^{-1} = \frac{\mu^{\mathrm{neq}}}{\eta} \left( \bm C^{-1} - \bar{\bm C}_{\mathrm v}^{-1} \right)
\end{align*}
has been used widely for soft materials (\cite{Hossain2012,Stewart2023}). \cite{Reese1998} showed that the above evolution equation can be recovered through linearization of \eqref{eq:Tallec_evolution_eqn}.

\paragraph{The Model based on the inelastic metric}
Following the above discussion, we consider $\bm E^{\mathrm v}$ defined as the Hencky strain of $\bm \Gamma \in$ Sym(3)$_{+}$ here. If $\bm E$ is also chosen as the Hencky strain, the configurational free energy is a function of $\bm E - \bm E^{\mathrm v} = \ln \bm C - \ln \bm \Gamma$. Noting that the Hencky strain is coercive, one may introduce an logarithmic elastic strain as $\bm E^{\mathrm e} := \bm E - \bm E^{\mathrm v}$, which can be represented as the Hencky strain defined by a positive-definite tensor $\bm C^{\mathrm e}$. Moreover, if $\bm C$ and $\bm \Gamma$ commute, the relation $\ln \bm C^{\mathrm e} = \ln \left( \bm C \bm \Gamma^{-1} \right)$ leads to
\begin{align*}
\bm C^{\mathrm e} = \bm C \bm \Gamma^{-1},
\end{align*}
which in fact is the decomposition proposed by \cite{Miehe1998}. In that work, $\bm \Gamma$ was introduced as a Lagrangian covariant plastic metric defined a priori. The coerciveness of the Hencky strain facilitates the definition of $\bm C^{\mathrm e}$, which conceptually describes the elastic deformation. This enables the characterization of elastic behavior using more flexible functional forms. \cite{Miehe2000} designed a model for rubbery polymers using the Ogden model based on the principal stretches of $\bm C^{\mathrm e}$ to describe the response in the viscoelastic and plastoelastic branches. On the numerical integration side, \cite{Miehe2000} formulated the evolution equation for the plastic metric in such a way that the exponential integrator can be applied. In doing so, one may guarantee the positiveness of the internal variable. Upon recognizing the one-to-one relationship between $\bm \Gamma$ and $\ln \bm \Gamma$, \cite{Miehe2002} switched to using the logarithmic strain of the plastic metric as the internal variable, and the design of the constitutive integration becomes more flexible.

\section{A family of finite linear viscoelasticity models}
\label{sec:quad-energy-model}
Our discussion in Section \ref{sec:connection-with-existing-models} reveals that the quadratic configurational free energy leads to a linear evolution equation for the internal variable, and different strain choices lead to different theories. For example, the Green-Lagrange strain results in the model of \cite{Simo1987} and the Euler-Almansi strain leads to the model of \cite{Green1946,Lubliner1985}. In the realm of hyperelasticity, the models of Hill's class are constructed with a quadratic strain energy, and the material nonlinearity is characterized by the use of generalized strains. In this section, we proceed to specify the energy forms based on the isochoric-volumetric decoupled Gibbs free energy, as given in \eqref{eq:Gibbs-energy-additive-form}. The equilibrium part of the energy $G^{\infty}_{\mathrm{ich}}$ can be constructed based on existing hyperelasticity theory, and our focus is on the quadratic form of the configurational free energy. To simplify the discussion, we consider a single relaxation process, with generalization to multiple relaxation processes as outlined in Section \ref{sec:gen-multiple-relax-process}. 

\subsection{Hyperelasticity of Hill's class}
\label{sec:hyperelasticity-of-Hill-class}
The modeling strategy is inspired by the hyperelasticity of Hill's class, whose strain energy is represented as a quadratic function form of a generalized strain. As an example, given a generalized strain $\tilde{\bm E}$, the equilibrium behavior is represented as 
\begin{align}
\label{eq:hyper-quadratic-energy}
G^{\infty}_{\mathrm{ich}}(\tilde{\bm C}) = \frac{\mu^\infty}{2} \left\lvert \tilde{\bm E}(\tilde{\bm C}) \right \rvert^2.
\end{align}
If one adopts the Seth-Hill strain family, the parameter $m$ of the strain and the shear modulus $\mu^\infty$ need to be calibrated to fit experimental data. Over the years, the advent of the generalized strains has significantly enriched the hyperelasticity of Hill's class (\cite{Korobeynikov2022}). Moreover, \cite{Beex2019} suggested extending model's capabilities by using multiple quadratic terms. For example, the strain energy can be expressed as
\begin{align}
\label{eq:gen-hyper-quadratic-energy}
G^{\infty}_{\mathrm{ich}}(\tilde{\bm C}) = \sum_{\beta=1}^{N} \frac{\mu^\infty_{\beta}}{2} \left\lvert \tilde{\bm E}_{\beta}(\tilde{\bm C}) \right \rvert^2,
\end{align}
by employing $N$ quadratic terms. 
\begin{remark}
It is worth pointing out that the term $\lvert \tilde{\bm E}_{\beta} \rvert^2$ can be viewed as an invariant of $\tilde{\bm E}_{\beta}$, i.e., $\mathrm{tr} \tilde{\bm E}_{\beta}^2$. Replacing $\mathrm{tr} \tilde{\bm E}_{\beta}^2$ by the invariant $\mathrm{tr} \tilde{\bm E}_{\beta}$ in \eqref{eq:gen-hyper-quadratic-energy}, we have
\begin{align*}
G^{\infty}_{\mathrm{ich}}(\bm C) = \sum_{\beta=1}^{N} \frac{\mu^\infty_{\beta}}{2} \mathrm{tr} \tilde{\bm E}_{\beta} \quad \mbox{with} \quad \mathrm{tr} \tilde{\bm E}_{\beta} = \sum_{a=1}^{3} \frac{\lambda_a^{m_{\beta}}-1}{m_{\beta}},
\end{align*}
which is in fact the Ogden model. It is intriguing to notice that the above establishes an interesting relation between the Ogden model and the model of Hill's class through the invariants of $\tilde{\bm E}_{\beta}$.
\end{remark}

\begin{figure}
\begin{center}
\begin{tabular}{ccc}
\includegraphics[angle=0, trim=20 25 160 110, clip=true, scale=0.075]{./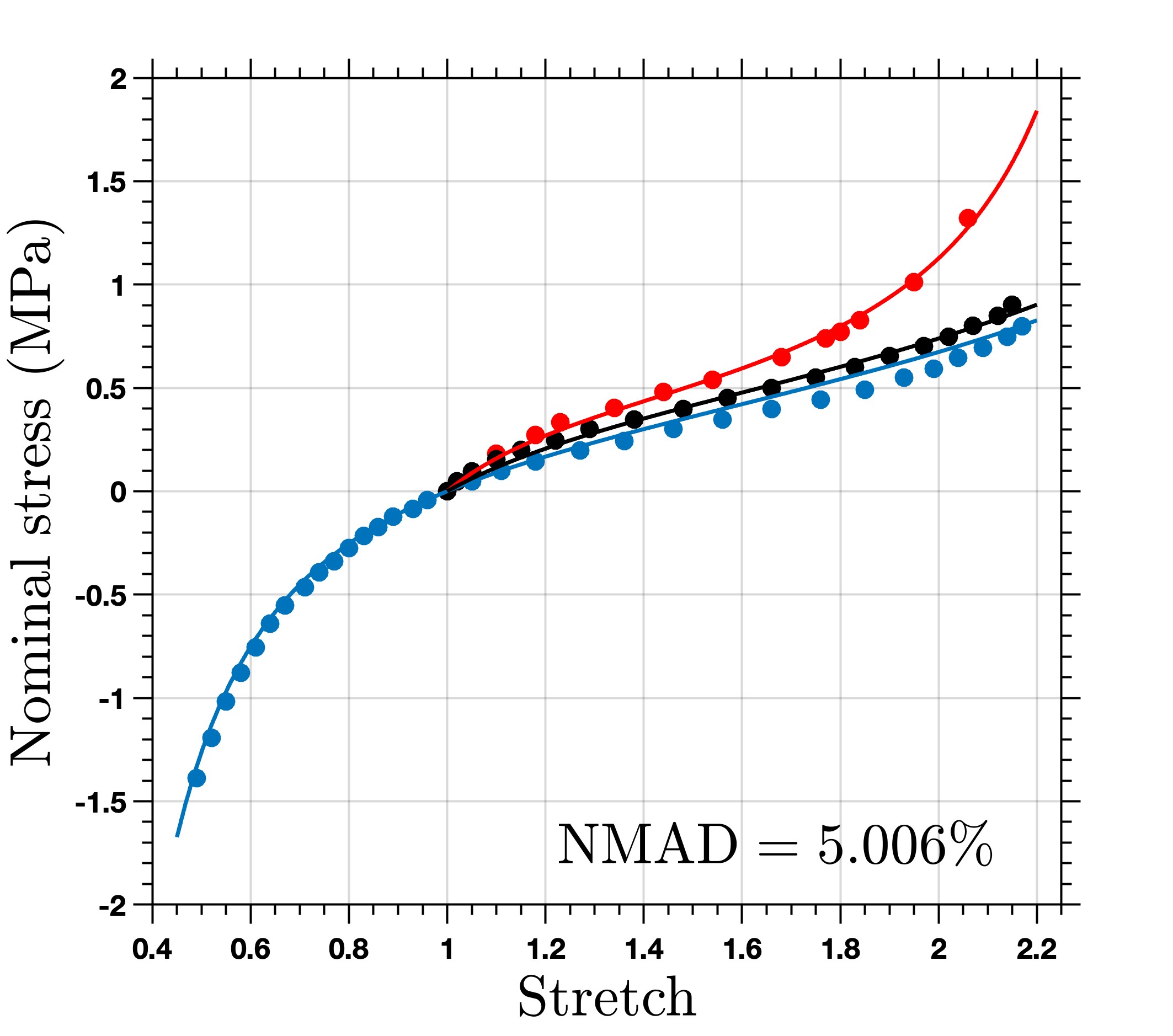} &
\includegraphics[angle=0, trim=130 25 160 110, clip=true, scale=0.075]{./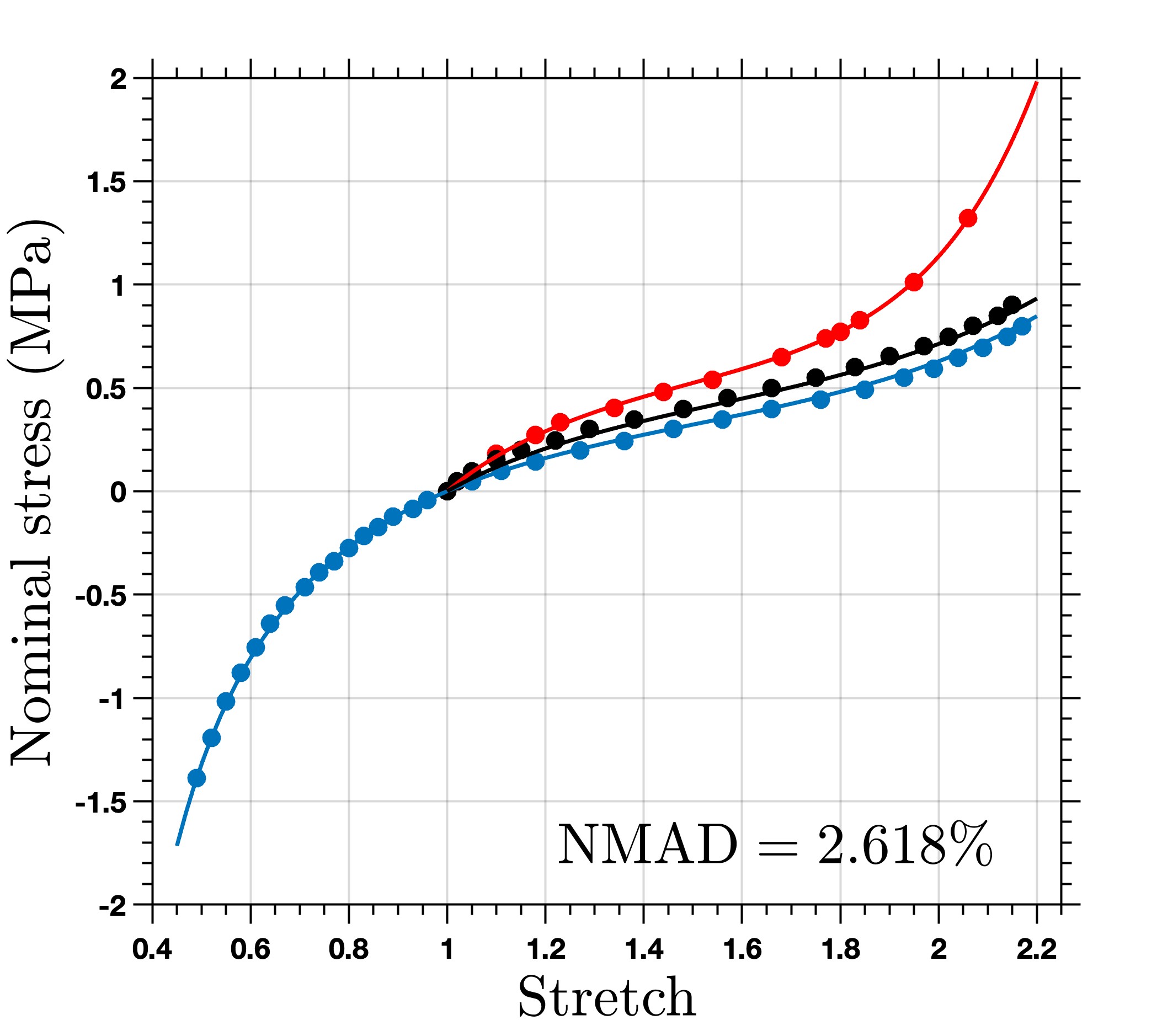} &
\includegraphics[angle=0, trim=130 25 160 110, clip=true, scale=0.075]{./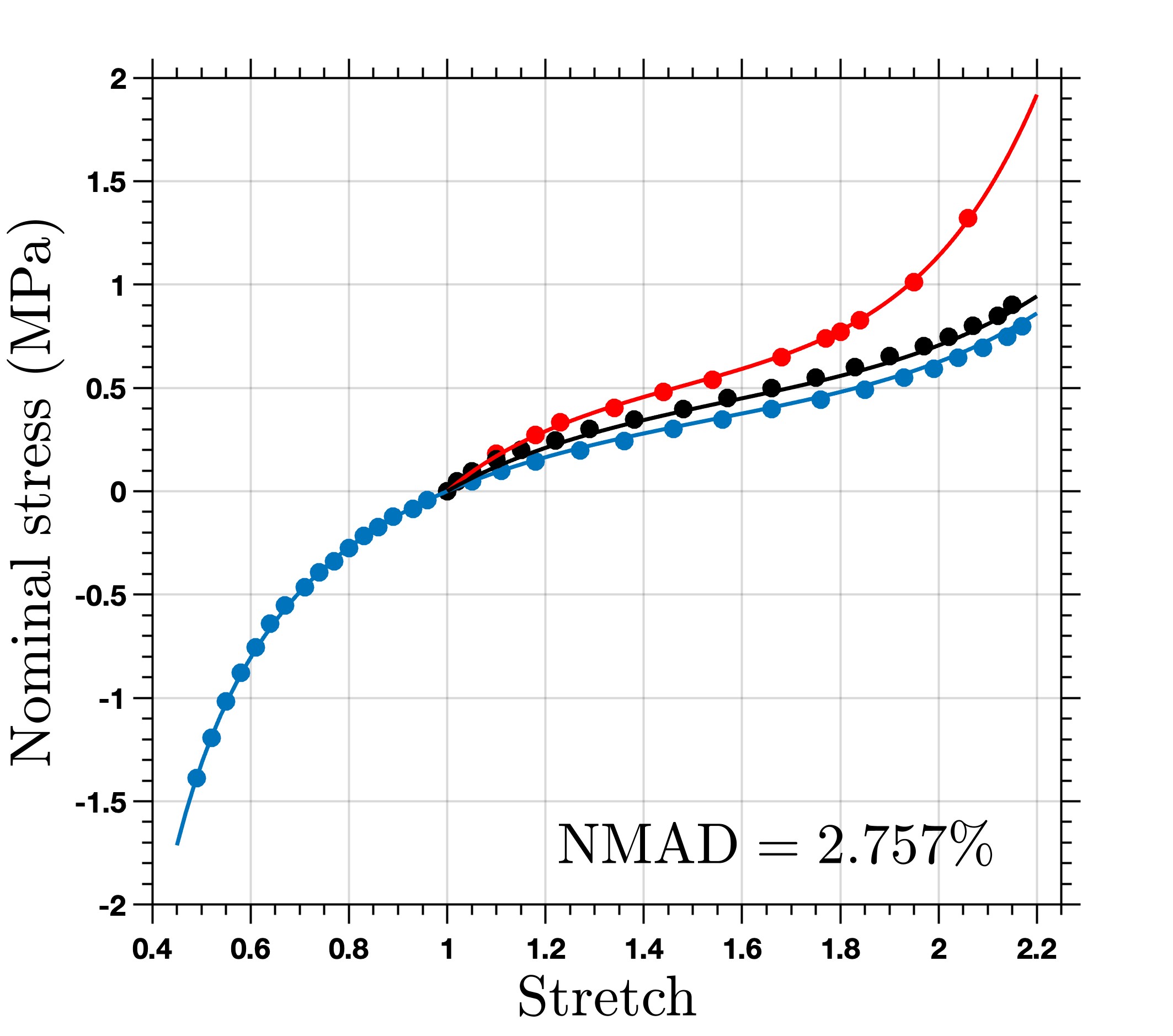}
\end{tabular}
\end{center}
\caption{Fitting results of the unfilled silicone rubber experiment of \cite{Meunier2008}: the eight-chain model (left), the three-term Ogden model (middle), and the model of Hill's class with two Curnier-Rakotomanana strains (right), under the uniaxial tension (blue), equi-biaxial tension (red) and pure shear (black) test conditions.}
\label{fig:hyperelasticity_fitting}
\end{figure}

To identify the material parameters and assess the models, we invoke the normalized mean absolute difference (NMAD) as the metric, which evaluates the error of the model in percentage relative to the experimental data. Let 
\begin{align*}
\bm e := \left\lbrace P_{\mathrm{exp}}(\lambda_i) \right\rbrace_{i=1}^{\mathcal N}, \quad \bm p := \left\lbrace P_{\mathrm{sim}}(\lambda_i) \right\rbrace_{i=1}^{\mathcal N}, \quad \mbox{and} \quad \bm e - \bm p := \left\lbrace P_{\mathrm{exp}}(\lambda_i) - P_{\mathrm{sim}}(\lambda_i) \right\rbrace_{i=1}^{\mathcal N}
\end{align*}
represent the set of the experimental data, the model predicted data, and their difference, respectively. Here $P_{\mathrm{exp}}(\lambda_i)$ and $P_{\mathrm{sim}}(\lambda_i)$ represent the experimentally measured and predicted nominal stress values at the stretch value $\lambda_i$, and there are a total of $\mathcal N$ data points in the data set. There may exist non-unique parameters that can all fit one single set of experimental data, especially for material models with multiple parameters (\cite{Dal2021,Ogden2004}). Therefore, more than one loading condition is utilized during the process of the model calibration, which is known as simultaneous fitting. For the $j$-th loading condition, with $j=1,\cdots, \mathcal M$, the experimental and predicted data are denoted by $\{\bm e^j \}$ and $\{ \bm p^j\}$, respectively, both with $\mathcal N_j$ data points. Here $\mathcal M$ denotes the total number of different experimental data sets. The NMAD of the simultaneous fitting is defined as
\begin{align}
\label{eq:NMAD}
\text{NMAD} := \frac{1}{\mathcal{M}} \sum_{j=1}^{\mathcal{M}} \frac{ \langle \bm{e}^j - \bm{p}^j \rangle_j }{\max(\langle \bm{e}^j \rangle_j, \langle \bm{p}^j \rangle_j)} 100 \%
\quad
\mbox{with}
\quad
\langle (\cdot) \rangle_j := \frac{1}{\mathcal N_j} \sum_{i=1}^{\mathcal N_j} | (\cdot)^{j}_{i} |.
\end{align}
In the above, $ \langle (\cdot) \rangle_j $ is the mean absolute value of the $j$-th generic data set. In this work, the above NMAD serves as the metric for evaluating the quality of fit. A smaller NMAD value indicates better model fitting performance.

To demonstrate the capability of the model \eqref{eq:gen-hyper-quadratic-energy}, we use the experimental data from \cite{Meunier2008}, which includes uniaxial, equi-biaxial, and pure shear tests for the unfilled silicone rubber. The calibration of three distinct material models is shown in Figure \ref{fig:hyperelasticity_fitting}. In the figure, the dots represent the experimental data, while the solid lines depict the fitted model curves. The eight-chain model, with only two parameters, fails to simultaneously capture the responses of uniaxial and biaxial tensile tests. In contrast, the Ogden model and the quadratic strain energy model based on two Curnier-Rakotomanana strains, both utilizing six parameters, accurately capture all three fundamental tests. Both models achieve comparable and notably lower NMAD values, demonstrating the effectiveness of the model of Hill's class.

\subsection{Quadratic configurational free energy}
\label{sec:quadratic_configurational_free_energy}
Consider the configurational free energy expressed as
\begin{align}
\label{eq:Upsilon-Hill-energy}
\Upsilon(\tilde{\bm C}, \bm E^{\mathrm v}) = \frac{\mu^\mathrm{neq}}{2} \left\lvert \tilde{\bm E} - \bm E^{\mathrm v} \right\rvert^2,
\end{align}
where $\mu^{\mathrm{neq}}$ denotes the shear modulus. In this design, the generalized strains serve two roles. They offer a flexible way for separating the inelastic deformation from the total deformation. At the same time, they characterize the nonlinear material behavior within the framework of hyperelasticity of Hill's class. The functional form of the energy \eqref{eq:Upsilon-Hill-energy} leads to the evolution equation 
\begin{align}
\label{eq:evo-eqn-hill-original-form}
\eta \dot{\bm E}^{\mathrm v} = \mu^\mathrm{neq} \left( \tilde{\bm E} - \bm E^{\mathrm v} \right).
\end{align}
Recalling that the thermodynamic equilibrium state is defined by $\dot{\bm E}^{\mathrm v} = \bm O$, it can be equivalently characterized by $\tilde{\bm E} = \bm E^{\mathrm v}$ from the evolution equation above. If we introduce $\tau := \eta / \mu^{\mathrm{neq}}$ as the relaxation time of the non-equilibrium process, the evolution equation \eqref{eq:evo-eqn-hill-original-form} is simplified to
\begin{align}
\label{eq:evo-eqn-hill-Ev}
\dot{\bm E}^{\mathrm v} + \frac{1}{\tau} \bm E^{\mathrm v} = \frac{1}{\tau} \tilde{\bm E}.
\end{align}
Taking a time derivative on \eqref{eq:evo-eqn-hill-original-form} results in an evolution equation for $\bm T^{\mathrm{neq}}$, which reads as
\begin{align}
\label{eq:evo-eqn-hill-T}
\dot{\bm T}^{\mathrm{neq}} + \frac{1}{\tau} \bm T^{\mathrm{neq}} = \mu^\mathrm{neq} \dot{\tilde{\bm E}}.
\end{align}
In the derivation of \eqref{eq:evo-eqn-hill-T}, we assume the moduli remain constant over time. Otherwise, there will be an extra term associated with $\dot{\mu}^\mathrm{neq}$ on the right-hand side of \eqref{eq:evo-eqn-hill-T}. Given the initial condition
\begin{align*}
\bm E^{\mathrm v} \Big|_{t=0} = \bm E^{\mathrm v}_{0},
\end{align*}
the solution of \eqref{eq:evo-eqn-hill-Ev} can be represented in the hereditary form
\begin{align}
\label{eq:exp_integration_Ev}
\bm E^{\mathrm v} = \exp\left( - \frac{t}{\tau} \right) \bm E^{\mathrm v}_{0} + \int^t_{0^+} \exp\left( -\frac{t-s}{\tau} \right) \frac{1}{\tau} \tilde{\bm E}  ds.
\end{align}
Similarly, integrating \eqref{eq:evo-eqn-hill-T} directly for $\bm T^{\mathrm{neq}}$ with the initial data $\bm T^{\mathrm{neq}}_{0}$ yields
\begin{align}
\label{eq:exp_integration_T}
\bm T^{\mathrm{neq}} = \exp\left( - \frac{t}{ \tau } \right) \bm T^{\mathrm{neq}}_{0} + \int^t_{0^+} \exp\left( -\frac{t-s}{\tau} \right) \mu^\mathrm{neq} \frac{d\tilde{\bm E}}{ds}  ds.
\end{align}
An interesting observation is the similarity between the evolution equations \eqref{eq:evo-eqn-hill-Ev}-\eqref{eq:evo-eqn-hill-T} and \eqref{eq:FLV-evo-eqn} from the identical polymer chain model. The similarity lies in that the evolution equations are linear with respect to the internal variables, leading to the analogous hereditary integral forms for their solutions. In both cases, the evolution equations \eqref{eq:evo-eqn-hill-T} and \eqref{eq:FLV-evo-eqn} are driven by nonlinear stress-like tensors. However, in \eqref{eq:FLV-evo-eqn} the right-hand side comes from the identical polymer chain assumption and the specific form of the configurational free energy \eqref{eq:FLV-Upsilon}. In the model developed here, the right-hand side of the evolution equation comes from the generalized strains $\tilde{\bm E}$ adopted in the configurational free energy.

\begin{remark}
To emphasize the impact of the choice of the internal variable, we consider the configurational free energy
\begin{align*}
\Upsilon = \frac{\mu^\mathrm{neq}}{2} \left\lvert \tilde{\bm E}(\tilde{\bm C}) - \bm E^{\mathrm v}(\bm \Gamma) \right\rvert^2,
\end{align*}
where the internal variable $\bm \Gamma \in$ Sym(3)$_{+}$ as discussed in Section \ref{sec:choice-of-ISV}. It results in the evolution equation
\begin{align*}
\frac12 \dot{\bm\Gamma} = \frac{1}{\tau}(\tilde{\bm E} - \bm E^{\mathrm v}) : \mathbb Q^{\mathrm v}.
\end{align*}
This equation is nonlinear in general and requires local Newton-Raphson iteration for constitutive integration.
\end{remark}

\begin{remark}
Our proposed model is based on $\bm E^{\mathrm v} \in$ Sym(3), without further specifying its dependence on $\bm \Gamma \in$ Sym(3)$_{+}$. Nevertheless, if we postulate the existence of $\bm \Gamma$ that defines $\bm E^{\mathrm v}$ as a generalized viscous strain, the proposed model recovers several existing models, following our discussion made in Section \ref{sec:connection-with-existing-models}. In particular, if $\bm E^{\mathrm v}$ is a Seth-Hill strain, the proposed model corresponds to the models of \cite{Green1946}, \cite{Miehe2000}, and \cite{Simo1987}, if the strain parameter is set to $m=-2$, $0$, and $2$, respectively.
\end{remark}

\subsection{Constitutive integration}
\label{sec:quadratic_constutitve_integration}
In this section, a detailed derivation of the stress and elasticity tensor is provided for the material model, incorporating both equilibrium and non-equilibrium components. Let $(0,T)$ be the time interval of interest. It is divided into $n_\mathrm{ts}$ subintervals of size $\Delta t_n = t_{n+1} - t_n$, bounded by a discrete time vector $\left\{ t_n \right\}_{n=0}^{n_\mathrm{ts}}$. In the following, $\left. (\cdot)\right|_{t_n}$ denotes the value of $(\cdot)$ evaluated at time $t_n$, and $(\cdot)_n$ denotes the discrete approximation of $(\cdot)$ at the time instance $t_n$. The elasticity tensor at time $t_{n+1}$ is given by the following explicit expression \cite[pp.~255-256]{Holzapfel2000},
\begin{align*}
\mathbb C_{\mathrm{ich}\:n+1} :=& 2\frac{\partial \bm{S}_{\text{ich}\:n+1}}{\partial \bm{C}_{n+1}} \displaybreak[2] \\
=& \mathbb P_{n+1} : \tilde{\mathbb C} _{\mathrm{ich}\:n+1} : \mathbb P_{n+1}^\mathrm{T} + \frac 23 \mathrm{Tr} \left(J_{n+1}^{-2/3} \tilde{\bm S}_{\mathrm{ich}\:n+1} \right) \tilde{\mathbb P}_{n+1} - \frac 23 \left(\bm C_{n+1}^{-1} \otimes\bm S_{\mathrm{ich}\:n+1}  + \bm S_{\mathrm{ich}\:n+1} \otimes \bm C_{n+1}^{-1} \right),
\end{align*}
with
\begin{gather}
\mathbb P_{n+1} := \mathbb I - \frac13 \bm C^{-1}_{n+1} \otimes \bm C_{n+1}, \quad  \mathrm{Tr}(\cdot):=(\cdot):\bm C_{n+1}, \quad \mbox{and} \quad \tilde{\mathbb P}_{n+1} := \bm C_{n+1}^{-1} \odot \bm C_{n+1}^{-1} - \frac 13 \bm C_{n+1}^{-1} \otimes\bm C_{n+1}^{-1}, \\
\label{eq:FLV-elasticity-tensor}
\tilde{\mathbb C}_{\mathrm{ich}\:n+1} = \tilde{\mathbb C}^{\infty}_{\mathrm{ich}\:n+1} + \tilde{\mathbb C}^{\mathrm{neq}}_{\mathrm{ich}\:n+1}, \quad \tilde{\mathbb C}^{\infty}_{\mathrm{ich}\:n+1}:= 2 J_{n+1}^{-\frac43} \frac{\partial \tilde{\bm S}^{\infty}_{\mathrm{ich}\:n+1}}{\partial \tilde{\bm C}_{n+1}}, \quad \tilde{\mathbb C}^{\mathrm{neq}}_{\mathrm{ich}\:n+1}:= 2 J_{n+1}^{-\frac43} \frac{\partial \tilde{\bm S}^{\mathrm{neq}}_{\mathrm{ich}\:n+1}}{\partial \tilde{\bm C}_{n+1}},
\end{gather}
We may start by briefly discussing the algorithmic components of the equilibrium branch of the model.

\paragraph{Stress and elasticity tensor of the equilibrium part}
Based on the energy form of $G^{\infty}_{\mathrm{ich}}$ given in \eqref{eq:hyper-quadratic-energy}, the isochoric second Piola-Kirchhoff stress can be expressed as
\begin{align*}
\bm S^\infty_\mathrm{ich \: n+1} = J^{-2/3}_{n+1} \mathbb P_{n+1} :  \tilde{\bm S}^\infty_\mathrm{ich \: n+1}
\end{align*}
with
\begin{align*}
\tilde{\bm S}^\infty_\mathrm{ich \: n+1}=\bm T^\infty_{n+1} : \tilde{\mathbb Q}_{n+1}, \quad \bm T^\infty_{n+1} =  \mu^\infty \tilde{\bm E}_{n+1}, \quad \mbox{and} \quad \tilde{\mathbb Q}_{n+1}=2 \frac{\partial \tilde{\bm E}_{n+1}}{\partial \tilde{\bm C}_{n+1}}.
\end{align*}
The fictitious elasticity tensor $\tilde{\mathbb C}^\infty_{\mathrm{ich}\:n+1}$ in \eqref{eq:FLV-elasticity-tensor}$_2$ is explicitly formulated as
\begin{align*}
\tilde{\mathbb C}^\infty_{\mathrm{ich}\:n+1}=2J^{-4/3}_{n+1} \frac{\partial \left(\bm T^\infty_{n+1}:\tilde{\mathbb Q}_{n+1}\right)}{\partial \tilde{\bm C}_{n+1}}=J_{n+1}^{-4/3} \left( \mu^\infty \tilde{\mathbb{Q}}^{\:\mathrm{T}}_{n+1}:\tilde{\mathbb{Q}}_{n+1} + \bm T^\infty_{n+1}:\tilde{\bm{\mathcal L}}_{n+1}\right),
\end{align*}
with
\begin{align*}
\tilde{\bm{\mathcal{L}}}_{n+1} := 2 \frac{\partial \tilde{\mathbb Q}_{n+1}}{\partial \tilde{\bm C}_{n+1}} = 4\frac{\partial^2 \tilde{\bm E}_{n+1}}{\partial \tilde{\bm C}_{n+1} \partial \tilde{\bm C}_{n+1}}.
\end{align*}
The rank-six tensor $\tilde{\bm{\mathcal L}}_{n+1}$ can be calculated based on the eigen-decomposition of $\tilde{\bm C}_{n+1}$ and the scale function of the strain $\tilde{\bm E}_{n+1}$. Its explicit formula is documented in \cite{Miehe2001b} or \cite{Liu2024}. As discussed in Section \ref{sec:hyperelasticity-of-Hill-class}, the strain energy can be extended by adopting multiple quadratic terms, as shown in \eqref{eq:gen-hyper-quadratic-energy}. This leads to the modification of the stress and elasticity tensors accordingly. Readers may refer to \cite[Section~3.2]{Liu2024} for more details.

\paragraph{Stress and elasticity tensor of the non-equilibrium part}
The integration of the constitutive relation is based on developing a recursive formula for the internal variables. According to  \eqref{eq:exp_integration_T}, we have
\begin{align*}
\bm T^{\mathrm{neq}}|_{t_{n+1}} = \exp\left( - \frac{\Delta t_n}{ \tau } \right) \left. \bm T^{\mathrm{neq}}\right|_{t_n} + \int^{t_{n+1}}_{t_n} \exp\left( -\frac{t_{n+1}-s}{\tau} \right) \mu^\mathrm{neq} \frac{d\tilde{\bm E}}{ds} ds.
\end{align*}
The integral in the above can be approximated numerically by a variety of different strategies \cite[p.~353-355]{Simo2006}. Here we may use the value $\exp(-\Delta t_n / 2\tau)$ to approximate the exponential function in the time integral, leading to
\begin{align}
\label{eq:exponential_integration_T}
\left.\bm T^\mathrm{neq} \right|_{t_{n+1}} \approx \bm T^\mathrm{neq}_{n+1}= \exp\left(-\frac{\Delta t_n}{\tau}\right)\bm T^\mathrm{neq}_n+ \exp\left(-\frac{\Delta t_n}{2\tau}\right) \mu^\mathrm{neq} \left(\tilde{\bm E}_{n+1}-\tilde{\bm E}_n\right).
\end{align}
This gives the constitutive integration algorithm. We notice that this formula is one-step, demanding only the information of the previous time instance. The non-equilibrium stress is then defined as
\begin{align*}
\bm S^\mathrm{neq}_\mathrm{ich\:n+1} = J^{-2/3}_{n+1}\mathbb{P}_{n+1}:\tilde{\bm S}^\mathrm{neq}_\mathrm{ich\:n+1} \quad \mbox{with} \quad \tilde{\bm S}^\mathrm{neq}_\mathrm{ich\:n+1}=\bm T^\mathrm{neq}_{n+1}:\tilde{\mathbb{Q}}_{n+1}.
\end{align*}
The fictitious elasticity tensor $\tilde{\mathbb C}^{\mathrm{neq}}_{\mathrm{ich}\:n+1}$ in \eqref{eq:FLV-elasticity-tensor}$_3$ is given by
\begin{align*}
\tilde{\mathbb C}^\mathrm{neq}_{\mathrm{ich}\:n+1}:=2J_{n+1}^{-4/3} \frac{\partial \tilde{\bm S}^\mathrm{neq}_{\mathrm{ich}\:n+1}}{\partial \tilde {\bm C}_{n+1}}=J_{n+1}^{-4/3}\left( \tilde{\mathbb Q}^{T}_{n+1}:\tilde{\mathbb S}_{n+1}+\bm T^\mathrm{neq}_{n+1}:\tilde{\bm{\mathcal L}}_{n+1}\right)
\quad \mbox{with} \quad
\tilde{\mathbb S}_{n+1} := 2\frac{\partial \bm T^\mathrm{neq}_{n+1}}{\partial \tilde{\bm C}_{n+1}}.
\end{align*}
The expression of $\tilde{\mathbb S}_{n+1}$ depends on the constitutive integration algorithm. According to \eqref{eq:exponential_integration_T}, one has
\begin{align*}
\tilde{\mathbb S}_{n+1} = \mu^\mathrm{neq}\exp\left(-\frac{\Delta t_n}{2\tau}\right) \frac{\partial \tilde{\bm E}_{n+1}}{\partial  \tilde{\bm C}_{n+1}}=\mu^\mathrm{neq}\exp\left(-\frac{\Delta t_n}{2\tau}\right)\tilde{\mathbb{Q}}_{n+1}.
\end{align*}
Remarkably, the formula of $\tilde{\mathbb C}^\mathrm{neq}_{\mathrm{ich}\:n+1}$ is identical to that of $\tilde{\mathbb C}^\infty_{\mathrm{ich}\:n+1}$, except that $\mu^{\infty}$ replaced by $\mu^{\mathrm{neq}}\exp(-\Delta t_n/2\tau)$. The energy of the non-equilibrium part can be characterized by multiple relaxation processes as well. Each process evolves independently with distinct parameters and generalized strains, as shown in \eqref{eq:evo-multiple-relax-process}. The modified isochoric second Piola-Kirchhoff stress $\bm S^\mathrm{neq}_{\mathrm{ich}\:n+1}$ and the algorithmic tensor $\tilde{\mathbb C}^\mathrm{neq}_{\mathrm{ich}\:n+1}$ can therefore be represented as the sum of these multiple processes.

\begin{remark}
We may alternatively design the constitutive integration by evaluating \eqref{eq:exp_integration_Ev} at time $t_{n+1}$, leading to
\begin{align*}
\left.\bm E^\mathrm{v}\right|_{t_{n+1}}=\exp\left( -\frac{\Delta t_n}{\tau}\right) \left. \bm E ^\mathrm{v}\right|_{t_n}+\int_{t_n}^{t_{n+1}}  \exp\left(-\frac{t_{n+1}-s}{\tau}\right) \frac{\tilde{\bm E}}{\tau} ds.
\end{align*}
An unconditionally stable and second-order accurate formula can be obtained by approximating $\tilde{\bm E}$ in the integral using its midpoint value, yielding
\begin{align*}
\left.\bm E^\mathrm{v}\right|_{t_{n+1}} \approx \bm E^\mathrm{v}_{n+1}=\exp\left( -\frac{\Delta t_n}{\tau}\right) \bm E ^\mathrm{v}_n+\frac{\left(1-\exp\left(-\Delta t_n/\tau\right)\right)}{2}\left(\tilde{\bm E}_n+\tilde{\bm E}_{n+1}\right).
\end{align*}
In the model developed here, integrating the viscoelastic constitutive relation based on $\bm E^{\mathrm v}$ is completely feasible because our theory merely demands $\bm E^{\mathrm v}$ as a symmetric tensor. Yet, if one introduces $\bm E^{\mathrm v}$ as a non-coercive generalized strain of $\bm \Gamma \in \mathrm{Sym}(3)_{+}$, the eigenvalues of $\bm E^{\mathrm v}$ need to lie within a strict subset of $\mathbb R$. The above integration formula does not warrant the particular eigenstructure of $\bm E^{\mathrm v}$ and may thus fail. This is likely the reason that \cite{Simo1987} chose to integrate the stress-like tensor instead of integrating the internal variables $\bm \Gamma$.
\end{remark}

\section{Beyond the linear evolution equation}
\label{sec:micromechanical-model}
In this section, we extend the model by introducing the \textit{elastic strain} within the framework outlined in Section \ref{sec:theory}. This concept was briefly touched in the discussion of the model based on the inelastic metric in Section \ref{sec:connection-with-existing-models}. The concept of the elastic strain facilitates the construction of the configurational free energy $\Upsilon$ using the general strategies of hyperelasticity modeling, without being confined to a quadratic form. This approach paves the way for deriving the nonlinear viscoelasticity model in a systematic manner.

\subsection{On the notion of the elastic strain}
In Section \ref{sec:quad-energy-model}, we developed a family of models based on Assumption \ref{as:Ev-existence} and a quadratic form of the energies. For the Seth-Hill strain family, the non-coerciveness renders $\bm E - \bm E^{\mathrm v}$ not necessarily representable as a strain. It is thus avoided to interpret $\bm E - \bm E^{\mathrm v}$ as the additivity of elastic and inelastic strains. Despite the fact that several inelasticity theories have been established based on the Green-Naghdi kinematic assumption, the majority of them describe the elastic response by a quadratic energy function, closely related to the class of Hill's hyperelasticity. An exception is the model by \cite{Miehe2000}, in which the authors designed the elastic response using the Ogden model. The Hencky strain adopted in \cite{Miehe2000} allows the authors to retrieve the elastic stretches in the inelastic branches. It thus becomes evident that the coerciveness \eqref{eq:E_coerciveness} is not only physically sound but also mathematically relevant. It ensures that any symmetric tensor can be represented in the form of a generalized strain. This observation is stated in the following proposition.
\begin{proposition}
\label{prop:Sym3-spectral-rep}
Given $\bm W \in$ Sym(3) and a coercive scale function $E$, there exist $w_a \in \mathbb R_{+}$ and orthonormal eigenvectors $\hat{\bm N}_a$ for $a=1,2,3$, such that
\begin{align*}
\bm W = \sum_{a=1}^{3} E(w_a) \hat{\bm N}_a \otimes \hat{\bm N}_a.
\end{align*}
\end{proposition}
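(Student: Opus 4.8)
The plan is to combine the spectral theorem for symmetric tensors with the bijectivity of a coercive scale function. First I would invoke the spectral theorem: since $\bm W \in \mathrm{Sym}(3)$ is a real symmetric rank-two tensor, it admits a spectral decomposition $\bm W = \sum_{a=1}^{3} \omega_a \, \hat{\bm N}_a \otimes \hat{\bm N}_a$ with real eigenvalues $\omega_a \in \mathbb R$ and an associated orthonormal set of eigenvectors $\hat{\bm N}_a$, $a=1,2,3$. The reality of the $\omega_a$ is exactly the self-adjointness of $\bm W$, and repeated eigenvalues cause no difficulty, since the spectral theorem still furnishes an orthonormal eigenbasis spanning each eigenspace.

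The crux is then to show that every eigenvalue $\omega_a$ lies in the range of $E$, so that one may legitimately set $w_a := E^{-1}(\omega_a)$. Here I would appeal to the assumed properties of the scale function. The regularity condition \eqref{eq:E_property}$_1$, namely $E'>0$, renders $E$ strictly increasing and hence injective on $(0,\infty)$, while the twice-differentiability requirement guarantees continuity. The coerciveness condition \eqref{eq:E_coerciveness}, together with the intermediate value theorem, supplies surjectivity onto $\mathbb R$: for any prescribed real target, the limits $E(\lambda)\to -\infty$ as $\lambda \to 0$ and $E(\lambda)\to +\infty$ as $\lambda \to \infty$ provide arguments at which $E$ lies below and above it, and continuity then yields a preimage. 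Thus $E$ is a bijection from $\mathbb R_{+}$ onto $\mathbb R$, precisely as recorded in the discussion following \eqref{eq:E_coerciveness}, and for each $a$ there exists a unique $w_a \in \mathbb R_{+}$ with $E(w_a) = \omega_a$.

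Finally I would substitute $\omega_a = E(w_a)$ back into the spectral decomposition to obtain $\bm W = \sum_{a=1}^{3} E(w_a)\, \hat{\bm N}_a \otimes \hat{\bm N}_a$, which is exactly the claimed representation, with the eigenvectors of $\bm W$ playing the role of principal directions. I expect the main obstacle to be conceptual rather than computational: the entire argument hinges on the surjectivity step, and it is there alone that coerciveness is indispensable. Without \eqref{eq:E_coerciveness}, the range of $E$ may be a proper subset of $\mathbb R$ (as for the non-coercive Seth-Hill strains), so an eigenvalue $\omega_a$ falling outside that range would admit no preimage $w_a$. This is precisely the compatibility obstruction highlighted earlier in the text, illustrated by the fact that a Green-Lagrange strain cannot accommodate an eigenvalue below $-1/2$. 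Recognizing that coerciveness is exactly the hypothesis removing this obstruction is, to my mind, the conceptual heart of the proof.
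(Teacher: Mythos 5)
Your proposal is correct and follows essentially the same route as the paper's own proof: spectral decomposition of $\bm W$, then monotonicity of $E$ for invertibility and coerciveness for surjectivity onto $\mathbb R$, giving $w_a = E^{-1}(W_a)$. Your version merely spells out the surjectivity step (via continuity and the intermediate value theorem) that the paper states in one line.
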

\begin{proof}
As a symmetric tensor, $\bm W$ enjoys a spectral decomposition 
\begin{align*}
\bm W = \sum_{a=1}^{3} W_a \hat{\bm N}_a \otimes \hat{\bm N}_a,
\end{align*}
where $\lbrace \hat{\bm N}_a \rbrace$ are mutually orthonormal eigenvectors, and $W_a \in \mathbb R$ are the eigenvalues. The monotonicity \eqref{eq:E_property}$_1$ ensures the inverse of the scale function $E^{-1}$ exists, while the coerciveness \eqref{eq:E_coerciveness} implies the domain of the inverse function is $\mathbb R$. Therefore, the value of $w_a$ can be uniquely determined as $E^{-1}(W_a)$. 
\end{proof}

The above proposition implies that $\bm E - \bm E^{\mathrm v}$, as a symmetric tensor, can be expressed as a coercive generalized strain denoted by $\bm E^{\mathrm e}$, which is intuitively interpreted as the elastic strain. For convenience, it is natural to require that the scale function of $\bm E^{\mathrm e}$ is of the same type as that of the strain $\bm E$. We formalized this by stating the following kinematic assumption in addition to Assumption \ref{as:Ev-existence}.

\begin{assumption}
\label{as:Ee-existence}
Given a coercive strain $\bm E$, the elastic strain $\bm E^{\mathrm e} := \bm E - \bm E^{\mathrm v}$ is defined by the same coercive scale function used in the definition of $\bm E$.  
\end{assumption}

The coerciveness enables the introduction of the elastic strain, and Assumption \ref{as:Ee-existence} leads to the additive split of the strain $\bm E = \bm E^{\mathrm e} + \bm E^{\mathrm v}$. We have to cautiously mention that the definition of $\bm E^{\mathrm e}$ is indeed local, as there does not necessarily exist a continuous global deformation that generates it. The same situation arises in the multiplicative decomposition and even in the theory of infinitesimal strain inelasticity. Nevertheless, we still refer to it as a `strain' with the underlying issue recognized. Following Proposition \ref{prop:Sym3-spectral-rep}, we have the elastic stretches $\{ \lambda^{\mathrm e}_a \}$ and elastic principal directions $\{ \bm N_a^{\mathrm e} \}$ defined, such that
\begin{align*}
\bm E^{\mathrm e} = \sum_{a=1}^{3} E(\lambda^{\mathrm e}_a) \bm N_a^{\mathrm e} \otimes \bm N_a^{\mathrm e}.
\end{align*}
With this eigenstructure, a positive semi-definite tensor
\begin{align}
\label{eq:def-GN-C-e}
\bm C^{\mathrm e} := \sum_{a=1}^{3} \lambda^{\mathrm e \: 2}_a \bm N_a^{\mathrm e} \otimes \bm N_a^{\mathrm e},
\end{align}
is introduced and is termed as the \textit{elastic deformation tensor}.  With the introduction of $\bm E^{\mathrm e}$ and $\bm C^{\mathrm e}$, we may construct the Gibbs free energy \eqref{eq:Gibbs-a-b} as
\begin{align}
\label{eq:Gibbs-vol-a-Ce}
G(\bm C, P, \bm E^{\mathrm v}) = G_{\mathrm{vol}}(P) + G^{\infty}_{\mathrm a}(\bm C) + \Upsilon(\bm C^{\mathrm e}).
\end{align}
The configurational free energy $\Upsilon(\bm C^{\mathrm e})$, interpreted as the energy stored in the Maxwell element, is consistent with Assumption \ref{as:Ev-existence}, as $\bm C^{\mathrm e}$ is in fact a tensorial function depending on $\bm E(\bm C) - \bm E^{\mathrm v}$. Therefore, the constitutive theory developed in Section \ref{sec:theory} remain valid. Due to the additional structure induced by Assumption \ref{as:Ee-existence}, we can present a more refined representation of the constitutive relations. Recalling \eqref{eq:Gibbs_a_vol_constitutive_relation}, the stress is given by $\bm S = \bm S_{\mathrm{a}} + \bm S_{\mathrm{vol}}  =  \bm S_{\mathrm{a}}^{\infty} + \bm S_{\mathrm{a}}^{\mathrm{neq}} + \bm S_{\mathrm{vol}}$, with
\begin{align*}
\bm S^{\infty}_{\mathrm a} = 2 \frac{\partial G^{\infty}_{\mathrm a}}{\partial \bm C}, \quad
\bm S^{\mathrm{neq}} = 2 \frac{\partial \Upsilon}{\partial \bm C} = \bm T^{\mathrm{neq}} : \mathbb Q, \quad \bm T^{\mathrm{neq}} = -\frac{\partial \Upsilon}{\partial \bm E^{\mathrm v}}, \quad \mbox{and} \quad \bm S_{\mathrm{vol}} = -J P\bm C^{-1}.
\end{align*}
The evolution equation follows \eqref{eq:evolution_equation} and is restated here as
\begin{align*}
\dot{\bm E}^{\mathrm v} = \mathbb V^{-1} : \bm T^{\mathrm{neq}}.
\end{align*}
Regarding the configurational free energy constructed as a function of $\bm C^{\mathrm e}$, an explicit form of $\bm T^{\mathrm{neq}}$ can be derived as
\begin{align}
\label{eq:T_neq_explicit_form}
\bm T^{\mathrm{neq}} = - \frac{\partial \Upsilon}{\partial \bm E^{\mathrm v}} = - \frac{\partial \Upsilon}{\partial \bm C^{\mathrm e}} : \frac{\partial \bm C^{\mathrm e}}{\partial \bm E^{\mathrm e}} : \frac{\partial \bm E^{\mathrm e}}{\partial \bm E^{\mathrm v}} = \bm S^{\mathrm e} : \mathbb Q^{\mathrm e \: -1}, \quad \mbox{with} \quad \bm S^{\mathrm e} := 2\frac{\partial \Upsilon}{\partial \bm C^{\mathrm e}} \quad \mbox{and} \quad \mathbb Q^{\mathrm e\:-1} := \frac12 \frac{\partial \bm E^{\mathrm e}}{\partial \bm C^{\mathrm e}}.
\end{align}
From the above, we conclude that the constitutive relations and evolution equations depend on the form of $\bm S^{\mathrm e}$ essentially. The benefit of introducing the elastic deformation tensor is that we may exploit the existing phenomenological or micromechanical models to characterize the elastic response in the non-equilibrium branch. In addition to the Hencky strain, the coercive strains listed in Table \ref{table:list_of_gen_strains} offer more flexibility in the separation of the inelastic behavior from the total deformation. Before delving into the micromechanical models, we first consider the two general options for phenomenological modeling by examining the explicit forms of $\bm S^{\mathrm e}$.

\paragraph{Invariant-based models}
One general modeling approach constructs the energy as the polynomial combinations of the principal invariants of the deformation tensor. Sometimes, more complex forms, such as logarithmic or exponential functions, are introduced to enhance the model. We consider the configurational free energy $\Upsilon$ constructed in terms of the principal invariants of $\bm C^{\mathrm e}$, that is,
\begin{align*}
\Upsilon(\bm C^{\mathrm e}) = \Upsilon( I_1^{\mathrm e}, I_2^{\mathrm e}, I_3^{\mathrm e}),
\end{align*}
where the invariants are given by
\begin{align*}
I_1^{\mathrm e} := \bm C^{\mathrm e} : \bm I, \quad I_2^{\mathrm e} := \frac12 \left( I_1^{\mathrm e \: 2} - \bm C^{\mathrm e \: 2} : \bm I \right), \quad I_3^{\mathrm e} := \mathrm{det}\bm C^{\mathrm e}.
\end{align*}
With this form of $\Upsilon$, the tensor $\bm S^{\mathrm e}$ can be expressed as
\begin{align*}
\bm S^{\mathrm e} := 2\frac{\partial \Upsilon}{\partial \bm C^{\mathrm e}} = 2 \left( \frac{\partial \Upsilon}{\partial I_1^{\mathrm e}} + I_1^{\mathrm e} \frac{\partial \Upsilon}{\partial I_2^{\mathrm e}} \right) \bm I - 2\frac{\partial \Upsilon}{\partial I^{\mathrm e}_2} \bm C^{\mathrm e} + 2 I^{\mathrm e}_3 \frac{\partial \Upsilon}{\partial I^{\mathrm e}_3} \bm C^{\mathrm e \: -1}.
\end{align*}
An appealing aspect of the invariant-based approach is that it allows for generalization to anisotropic material responses, with the aid of structural tensors (\cite{Holzapfel2000}). However, the treatment of anisotropic materials is beyond the scope of this work.

\paragraph{Principal stretch-based models}
Instead of using the principal invariants, the isotropic hyperelastic material response can also be conveniently represented in terms of principal stretches. The Valanis-Landel hypothesis enables a more concise form of the configurational free energy, i.e.,
\begin{align*}
\Upsilon(\bm C^{\mathrm e}) = \sum_{a=1}^{3} \varpi(\lambda_a^{\mathrm e}),
\end{align*}
where the response in each direction is characterized by an identical scalar function $\varpi$. With the particular form of the energy, we have
\begin{align*}
\bm S^{\mathrm e} = \sum_{a=1}^{3} \frac{1}{\lambda^{\mathrm e}_a} \varpi'(\lambda^{\mathrm e}_{a}) \bm N_a^{\mathrm e} \otimes \bm N_a^{\mathrm e}.
\end{align*}
Since the elastic deformation tensor $\bm C^{\mathrm e}$ is defined based on the elastic stretches and elastic principal directions, the calculation of $\bm S^{\mathrm e}$ is straightforward in practice.

\begin{remark}
Assumption \ref{as:Ee-existence} assumes the same scale function is used for defining both the strain $\bm E$ and the elastic strain $\bm E^{\mathrm e}$. It is possible to further relax this condition by allowing these two strains to be defined with different scale functions.
\end{remark}

\begin{remark}
Regarding the theory based on the isochoric-volumetric split of the energy form \eqref{eq:Gibbs-energy-additive-form}, we may introduce $\tilde{\bm E}^{\mathrm e} := \tilde{\bm E} - \bm E^{\mathrm v}$, and the energy can be constructed as $G(\bm C, P, \bm E^{\mathrm v}) = G_{\mathrm{vol}}(P) + G^{\infty}_{\mathrm{ich}}(\tilde{\bm E}(\bm C)) + \Upsilon(\tilde{\bm C}^{\mathrm e})$. The resulting constitutive relations follow the discussion made in Section \ref{sec:treatment-of-compressibility}.
\end{remark}

\subsection{A micromechanically inspired model}
\label{sec:a_micro_inspired_model}
The strength of introducing the elastic strain lies in its ability to leverage general models for the elastic response. Micromechanical models, rooted in the microstructure of materials, often demonstrate exceptional accuracy in capturing the stress-strain response for rubber-like materials. In this work, we utilize the eight-chain model for our nonlinear viscoelasticity model (\cite{Arruda1993,Bischoff2001}). It is worth noting that the model of \cite{Arruda1993} has been adopted for nonlinear viscoelasticity based on the multiplicative decomposition $\bm F = \bar{\bm F}^{\mathrm e} \bar{\bm F}^{\mathrm v}$ by \cite{Bergstroem1998,Dal2020}, and others.

In this section, we propose a micromechanical model based on the Gibbs free energy in the form of \eqref{eq:Gibbs-vol-a-Ce}, with the contributions of the equilibrium and non-equilibrium parts given by
\begin{align*}
G^{\infty}_{\mathrm{a}}(\bm C) = \mu^\infty N^\infty \left( \lambda^{\infty} \mathfrak L^{-1}(\lambda^{\infty}) + \ln \frac{\mathfrak L^{-1}(\lambda^{\infty})}{\sinh(\mathfrak L^{-1}(\lambda^{\infty}))} \right) - \frac{\mu^\infty\sqrt{N^\infty}}{3} \mathfrak L^{-1}(1/\sqrt{N^\infty}) \ln(J),
\end{align*}
and
\begin{align*}
\Upsilon(\bm C^{\mathrm e}) = \mu^\mathrm{neq} N^\mathrm{neq} \left( \lambda^{\mathrm{neq}} \mathfrak L^{-1}(\lambda^{\mathrm{neq}}) + \ln \frac{\mathfrak L^{-1}(\lambda^{\mathrm{neq}})}{\sinh(\mathfrak L^{-1}(\lambda^{\mathrm{neq}}))} \right) - \frac{\mu^\mathrm{neq} \sqrt{N^\mathrm{neq}}}{3} \mathfrak L^{-1}(1/\sqrt{N^\mathrm{neq}}) \ln(J^{\mathrm e}),
\end{align*}
respectively. Here, $\mu^{\infty}$ ($\mu^{\mathrm{neq}}$) and $N^{\infty}$ ($N^{\mathrm{neq}}$) represent the shear modulus and the number of molecular chain segments for the (non-)equilibrium part, respectively; we use $\mathfrak L^{-1}$ to denote the inverse of the Langevin function $\mathfrak L(x) := \coth x - 1/x$, which accounts for the limited chain extensibility; the relative average network stretches for the equilibrium part $\lambda^{\infty}$ and the non-equilibrium part $\lambda^\mathrm{neq}$ are given by
\begin{align*}
\lambda^{\infty} := \sqrt{\frac{I_1}{3N^\infty}}, \quad \text{and} \quad\lambda^{\mathrm{neq}} := \sqrt{\frac{I^{\mathrm e}_1}{3N^\mathrm{neq}}},
\end{align*}
respectively; the principal invariant $I_1$ defined as $\bm C : \bm I$, and $J^{\mathrm e} := \lambda^{\mathrm e}_1\lambda^{\mathrm e}_2\lambda^{\mathrm e}_3$. The forms of the energies are based on the statistical mechanics of polymer networks and a mapping between the macroscopic deformation and the microscopic network stretch. In the meantime, they can also be viewed as energy functions written in terms of the first principal invariants $I_1$ and $I_1^{\mathrm e}$. Recalling \eqref{eq:Gibbs_a_vol_constitutive_relation}, the second Piola-Kirchhoff stress is given by:
\begin{align*}
\bm S = \bm S_{\mathrm a} + \bm S_{\mathrm{vol}}, \quad \mbox{with} \quad \bm S_{\mathrm a} = \bm S_\mathrm{a}^\infty + \bm S_\mathrm{a}^\mathrm{neq} \quad \mbox{and} \quad \bm S_{\mathrm{vol}} = -J P \bm C^{-1}.
\end{align*}
The equilibrium part of the stress is expressed as
\begin{align}
\label{eq:michro-S-infty-a}
\bm S^\infty_\mathrm{a}=2\frac{\partial G^\infty_\mathrm{a}}{\partial \bm C}=2\frac{\partial G^\infty_\mathrm{a}}{\partial \lambda^{\infty}}\frac{\partial \lambda^{\infty}}{\partial \bm C}=\frac{\mu^\infty}{3\lambda^{\infty}}\mathfrak{L}^{-1}(\lambda^{\infty}) \bm I - \frac{\mu^\infty \sqrt{N^\infty}}{3}\mathfrak{L}^{-1}( 1/\sqrt{N^\infty} ) \bm C^{-1}.
\end{align}
The non-equilibrium part of the stress is determined from $\bm S^{\mathrm{neq}}_{\mathrm{a}} = \bm T^{\mathrm{neq}} : \mathbb{Q}$, with the form of $\bm T^{\mathrm{neq}}$ given by \eqref{eq:T_neq_explicit_form}, that is,
\begin{align*}
\bm T^\mathrm{neq} = \bm S^\mathrm{e} : {\mathbb{Q}^\mathrm{e}}^{-1}.
\end{align*}
The form of $\bm S^\mathrm{e}$ is similar to that of $\bm S^\infty_\mathrm{a}$, which is given by
\begin{align*}
\bm S^\mathrm{e} = 2\frac{\partial \Upsilon}{\partial \bm C^\mathrm{e}} = 2\frac{\partial \Upsilon}{\partial \lambda^\mathrm{neq}}\frac{\partial \lambda^\mathrm{neq}}{\partial \bm C^\mathrm{e}} = \frac{\mu^\mathrm{neq}}{3\lambda^\mathrm{neq}}\mathfrak{L}^{-1}(\lambda^\mathrm{neq}) \bm I - \frac{\mu^\mathrm{neq} \sqrt{N^\mathrm{neq}}}{3}\mathfrak{L}^{-1} (1/\sqrt{N^\mathrm{neq}} ) {\bm C^\mathrm{e}}^{-1}.
\end{align*}
Recalling the evolution equation \eqref{eq:evolution_equation}, the evolution equation for the micromechanical model can be expressed as
\begin{align}
\label{eq:micro_evolution_equation}
\eta \dot{ \bm E}^{\mathrm{v}} = \bm T^{\mathrm{neq}} = \bm S^\mathrm{e} : {\mathbb{Q}^\mathrm{e}}^{-1}.
\end{align}
Apparently, due to the nonlinear dependency of $\bm T^{\mathrm{neq}}$ on the internal variable $\bm E^{\mathrm v}$, the above evolution equation demands a local iterative process for the constitutive integration.

\subsection{Constitutive integration}
In this section, we discuss the integration algorithms of the proposed micromechanical model for finite element analysis. Given the nonlinear nature of the evolution equation, the overall algorithm involves the integration of the constitutive equations as well as the integration of the global balance equations. We begin by considering the integration scheme for the evolution equation \eqref{eq:micro_evolution_equation}. The computation of the elasticity tensor, which is essential for integrating the momentum balance equation, will be detailed subsequently. 

\paragraph{Integration of the evolution equation}
The nonlinear evolution equation \eqref{eq:micro_evolution_equation} is discretized through the mid-point scheme
\begin{align}
\label{eq:mid-point}
\bm E^\mathrm{v}_{n+1} = \bm E^\mathrm{v}_n + \frac{\Delta t_n}{\eta}\bm S^\mathrm{e}_{n+\frac12} :  \mathbb Q^{\mathrm{e}\:-1}_{n+\frac{1}{2}},
\end{align}
where
\begin{gather*}
\bm S^{\mathrm e}_{n+\frac12} = \frac{\mu^\mathrm{neq}}{3\lambda^{\infty}_{n+\frac12}}\mathfrak{L}^{-1}(\lambda^{\infty}_{n+\frac12}) \bm I - \frac{\mu^\mathrm{neq} \sqrt{N^\mathrm{neq}}}{3}\mathfrak{L}^{-1} (1/\sqrt{N^\mathrm{neq}} ) \bm C^\mathrm{e \: -1}_{n+\frac12}, \qquad
\mathbb Q^{\mathrm e \: -1}_{n+\frac12} = \frac12 \frac{\partial \bm C^{\mathrm e}_{n+\frac12}}{\partial \bm E^{\mathrm e}_{n+\frac12}}, \displaybreak[2] \\
\bm E^{\mathrm e}_{n+1} = \bm E_{n+1} - \bm E^{\mathrm v}_{n+1}, \quad \bm E^{\mathrm e}_{n} = \bm E_{n} - \bm E^{\mathrm v}_{n}, \displaybreak[2] \\
\bm E^{\mathrm e}_{n} = \sum_{a=1}^{3} E(\lambda^{\mathrm e}_{a \: n}) \bm N^{\mathrm e}_{a \: n} \otimes \bm N^{\mathrm e}_{a \: n}, \quad \bm E^{\mathrm e}_{n+1} = \sum_{a=1}^{3} E(\lambda^{\mathrm e}_{a \: n+1}) \bm N^{\mathrm e}_{a \: n+1} \otimes \bm N^{\mathrm e}_{a \: n+1}, \displaybreak[2] \\
\bm C^{\mathrm e}_{n} = \sum_{a=1}^{3} \lambda^{\mathrm e \: 2}_{a \: n} \bm N^{\mathrm e}_{a \: n} \otimes \bm N^{\mathrm e}_{a \: n}, \quad \bm C^{\mathrm e}_{n+1} = \sum_{a=1}^{3} \lambda^{\mathrm e \: 2}_{a \: n+1} \bm N^{\mathrm e}_{a \: n+1} \otimes \bm N^{\mathrm e}_{a \: n+1}, \displaybreak[2] \\
\bm C^{\mathrm e}_{n+\frac12} = \frac12 \left( \bm C^{\mathrm e}_{n} + \bm C^{\mathrm e}_{n+1} \right), \quad \lambda^{\mathrm{neq}}_{n+\frac12} = \left(\bm C^{\mathrm e}_{n+\frac12} : \bm I  / 3 N^{\mathrm{neq}}\right)^{\frac12},
\end{gather*}
and $\Delta t_n = t_{n+1}-t_n$. Given the deformation state characterized by $\bm E_{n+1}$, the internal variable $\bm E^{\mathrm v}_{n+1}$ is determined by solving the equation \eqref{eq:mid-point} through a Newton-Raphson iterative procedure at each quadrature point. The residual of the local Newton-Raphson iteration is defined as
\begin{align*}
\bm R_{n+1} := \bm E^\mathrm{v}_{n+1} - \bm E^\mathrm{v}_n - \frac{\Delta t_n}{\eta} \bm S^\mathrm{e}_{n+\frac12}:\mathbb Q_{n+\frac12}^{\mathrm{e}\:-1}.
\end{align*}
The linearization of $\bm R_{n+1}$ with respect to $\bm E^{\mathrm v}_{n+1}$ can be performed as
\begin{align}
\mathbb K_{n+1} := \frac{\partial \bm R_{n+1}}{\partial \bm E^\mathrm{v}_{n+1}} &=\mathbb I - \frac{\Delta t_n}{\eta} \Big(
\mathbb Q_{n+\frac12}^{\mathrm{e}\:-\mathrm{T}}:\frac{\partial \bm S^\mathrm{e}_{n+\frac12}}{\partial \bm E^\mathrm{v}_{n+1}}
+ \bm S^\mathrm{e}_{n+\frac12} : \frac{\partial \mathbb Q_{n+\frac12}^{\mathrm{e}\:-1}}{\partial \bm E^\mathrm{v}_{n+1}}
\Big) \displaybreak[2] \nonumber \\
\label{eq:mid-point-KK-def}
&=\mathbb I +  \frac{\Delta t_n}{\eta} \Big( \mathbb Q_{n+\frac12}^{\mathrm{e}\:\mathrm{-T}}:\frac12 \mathbb C^\mathrm{e}_{n+\frac12} : \mathbb Q_{n+1}^{\mathrm{e}\:-1} + \bm S^\mathrm{e}_{n+\frac12}: \bm{\mathcal K}^{\mathrm{e}}_{n+\frac12}:\mathbb Q^\mathrm{e}_{n+\frac12} : \mathbb Q ^{\mathrm{e}\:-1}_{n+1} \Big),
\end{align}
where
\begin{align}
\label{eq:micro-model-C-e-mid}
\mathbb C^\mathrm{e}_{n+\frac12}:= 2\frac{\partial \bm S^\mathrm{e}_{n+\frac12}}{\partial \bm C^\mathrm{e}_{n+\frac12}}
= \frac{\mu^\mathrm{neq}}{9N^\mathrm{neq}\lambda^\mathrm{neq}_{n+\frac12}}\frac{\partial \big(\mathfrak{L}^{-1}(\lambda_{n+\frac12}^\mathrm{neq})/\lambda_{n+\frac12}^\mathrm{neq}\big)}{\partial \lambda^\mathrm{neq}_{n+\frac12}} \bm I \otimes \bm I + \frac{2\mu^\mathrm{neq}\sqrt{N^\mathrm{neq}}}{3}\mathfrak{L}^{-1}(1/\sqrt{N^\mathrm{neq}}) \bm C_{n+\frac12}^{\mathrm{e}\:-1} \odot \bm C_{n+\frac12}^{\mathrm{e}\:-1}
\end{align}
and
\begin{align*}
\bm{\mathcal K}^\mathrm{e}_{n+\frac12}:= \frac{\partial \mathbb Q^{\mathrm{e}\:-1}_{n+\frac12}}{2\partial \bm E^\mathrm{e}_{n+\frac12}}= \frac{\partial^2 \bm C_{n+\frac12}^\mathrm{e}}{4\partial \bm E_{n+\frac12}^\mathrm{e}\partial \bm E^\mathrm{e}_{n+\frac12}}.
\end{align*}
The calculation of the rank-six tensor $\bm{\mathcal K}^{\mathrm e}_{n+1/2}$ is based on the eigendecomposition of $\bm C^{\mathrm e}_{n+1/2}$, and we give its explicit formula in Appendix \ref{appendix:mathcal-K}. We determine the internal variable at quadrature points by solving the nonlinear equation $\bm R_{n+1} = \bm O$. If we use the subscript $(i)$ to denote the quantities at the $i$-th Newton-Raphson iteration, the local iterative procedure can be summarized as follows.

\begin{myenv}{Local Newton-Raphson iteration}
\noindent \textbf{Predictor stage}: Set $\bm{E}_{n+1\:(0)}^{\mathrm v} = \bm{E}_{n}^{\mathrm v}$.
 
\noindent \textbf{Multi-Corrector stage}: Repeat the following steps for $i=0, \cdots, i_{\mathrm{max}}$.
\begin{enumerate}
\item Construct the local residual $\bm R_{n+1 \: (i)}$ with $\bm E^\mathrm{v}_{n+1 \: (i)}$. If one of the stopping criteria
\begin{align*}
\frac{\left \vert \bm{R}_{n+1\:(i)} \right \vert}{\left \vert \bm{R}_{n+1\:(0)} \right \vert} \leq \mathrm{tol}_{\mathrm{r}} \quad \mbox{and} \quad \left \vert \bm{R}_{n+1\:(i)} \right \vert \leq \mathrm{tol}_{\mathrm{a}}
\end{align*} 
is satisfied for two prescribed tolerances $\mathrm{tol}_{\mathrm r}$ and $\mathrm{tol}_{\mathrm a}$, set $\bm{E}_{n+1} = \bm{E}_{n+1\:(i)}$ and exit the multi-corrector stage. Otherwise, continue to Step 2.
\item Perform linearization of $\bm{R}_{n+1\:(i)}$ with respect to $\bm E^\mathrm{v}_{n+1 \: (i)}$ by constructing $\mathbb K_{n+1 \: (i)}$ using the internal variable $\bm E^\mathrm{v}_{n+1 \: (i)}$.
\item Solve the incremental solution
\begin{align}
\label{eq:local-NR-linear-tensor-eqn}
\mathbb K_{n+1\: (i)} : \Delta \bm{E}^{\mathrm v}_{n+1\:(i)} = -\bm{R}_{n+1\:(i)}.
\end{align}
\item Update the discrete internal state variable as $\bm{E}^{\mathrm v}_{n+1\:(i+1)} = \bm{E}^{\mathrm v}_{ n+1\:(i)} + \Delta \bm{E}^{\mathrm v}_{n+1\:(i)}$.
\end{enumerate}
\end{myenv}

\begin{remark}
We employ the mid-point rule for the evolution equation to achieve second-order temporal accuracy in the overall scheme. It can be demonstrated that the rank-four tensor $\mathbb K$, as given in \eqref{eq:mid-point-KK-def}, possesses minor symmetries. When the time step is sufficiently small, $\mathbb K$ is close to the identity tensor. Therefore, the equation \eqref{eq:local-NR-linear-tensor-eqn} is expected to be uniquely solvable at least for small time step sizes. Moreover, if the backward Euler scheme is applied to the evolution equation, the linearization of the local residual possesses both minor and major symmetries, which renders an even better-structured incremental equation in the local Newton-Raphson iteration.
\end{remark}

\paragraph{Integration of the linear momentum balance equations}
The integration of the global balance equation is also performed with the Newton-Raphson iterative procedure, which necessitates the specification of the elasticity tensor. The elasticity tensor due to $\bm S_{\mathrm a} = \bm S_\mathrm{a}^\infty + \bm S_\mathrm{a}^\mathrm{neq}$ at time $t_{n+1}$ is given by
\begin{align*}
\mathbb C_\mathrm{a \: n+1}:=2\frac{\partial \bm S_{\mathrm{a}\:n+1}}{\partial \bm C_{n+1}}=\mathbb C_{\mathrm{a}\: n+1}^\infty + \mathbb C_{\mathrm{a} \: n+1}^\mathrm{neq}
\quad \mbox{with} \quad
\mathbb C^\infty_{\mathrm{a} \: n+1} := 2\frac{\partial \bm S^\infty_{\mathrm{a} \: n+1}}{\partial \bm C_{n+1}}
\quad \mbox{and} \quad
\mathbb C^\mathrm{neq}_{\mathrm{a} \: n+1}:=2\frac{\partial \bm S^\mathrm{neq}_{\mathrm{a} \: n+1}}{\partial \bm C_{n+1}}.
\end{align*}
Based on the stress representation \eqref{eq:michro-S-infty-a}, the elasticity tensor of the equilibrium part can be derived directly as
\begin{align*}
\mathbb{C}_{\mathrm{a}\:n+1}^\infty = \frac{\mu^\infty}{9N^\infty\lambda^{\infty}_{n+1}}\frac{\partial \left(\mathfrak{L}^{-1}(\lambda^{\infty}_{n+1})/\lambda^{\infty}_{n+1}\right)}{\partial \lambda^{\infty}_{n+1}} \bm I \otimes \bm I + \frac{2\mu^\infty\sqrt{N^\infty}}{3}\mathfrak{L}^{-1}(1/\sqrt{N^\infty}) \bm C_{n+1}^{-1} \odot \bm C_{n+1}^{-1},
\end{align*}
where 
\begin{align*}
\lambda^{\infty}_{n+1} := \left( \bm C_{n+1} : \bm I / 3 N^{\infty} \right)^{\frac12}. 
\end{align*}
The inverse Langevin function $\mathfrak L^{-1}$ does not have an analytical formula, and in practice it is evaluated using approximated functions. In this work, the inverse Langevin function is approximated by the following rational function,
\begin{align*}
\mathfrak{L}^{-1}(x) \approx x\frac{15-\left(6x^2+x^4-2x^6\right)}{5\left(1-x^2\right)},
\end{align*}
which achieves a balance between accuracy and complexity (\cite{Kroeger2015}). The elasticity tensor of the non-equilibrium part is given by
\begin{align}
\label{eq:micro-model-CC-neq}
\mathbb{C}_{\mathrm{a}\:n+1}^\mathrm{neq} = 2\frac{\partial \left(\bm T_{n+1}^\mathrm{neq}:\mathbb Q_{n+1}\right)}{\partial \bm C_{n+1}} =  \mathbb Q_{n+1}^\mathrm{T}: \mathbb S_{n+1} + \bm T_{n+1}^\mathrm{neq} : \bm{\mathcal L}_{n+1},
\end{align}
with
\begin{align*}
\bm{\mathcal L}_{n+1} &:= 2\frac{\partial \mathbb Q_{n+1}}{\partial \bm C_{n+1}}=4\frac{\partial^2 \bm E_{n+1}}{\partial \bm C_{n+1} \partial \bm C_{n+1}}, \\
\mathbb S_{n+1} &:= 2\frac{\partial \bm T_{n+1}^\mathrm{neq}}{\partial \bm C_{n+1}} = 2\frac{\partial \left(\bm S_{n+1}^\mathrm{e}:\mathbb Q_{n+1}^{\mathrm{e}\:-1}\right)}{\partial \bm C_{n+1}}=\mathbb Q^{\mathrm{e}\:-\mathrm{T}}_{n+1}:2\frac{\partial \bm S^\mathrm{e}_{n+1}}{\partial \bm C_{n+1}} + \bm S_{n+1}^\mathrm{e} : 2\frac{\partial \mathbb Q^{\mathrm{e}\:-1}_{n+1}}{\partial \bm C_{n+1}}.
\end{align*}
The rank-six tensor $\bm{\mathcal L}_{n+1}$ can be calculated with the aid of the eigen-decomposition of $\bm C_{n+1}$. To derive a more explicit formula for $\mathbb S_{n+1}$, we introduce $\mathbb C^\mathrm{e}_{n+1}:=2\partial \bm S_{n+1}^\mathrm{e} / \partial \bm C_{n+1}^\mathrm{e}$, whose explicit expression is similar to \eqref{eq:micro-model-C-e-mid}, except that the network stretch is evaluated at $\lambda^{\mathrm{neq}}_{n+1}$. Consequently, we have
\begin{align}
\label{eq:micro-model-SS_1}
2\frac{\partial \bm S_{n+1}^\mathrm{e}}{\partial \bm C_{n+1}}=2\frac{\partial \bm S_{n+1}^\mathrm{e}}{\partial \bm C_{n+1}^\mathrm{e}}:\frac{\partial \bm C_{n+1}^\mathrm{e}}{2\partial \bm E_{n+1}^\mathrm{e}}:2\frac{\partial \bm E_{n+1}^\mathrm{e}}{\partial \bm C_{n+1}}=\mathbb C_{n+1}^\mathrm{e}:\mathbb{Q}_{n+1}^{\mathrm{e}\:-1}:\left(\mathbb Q_{n+1} - \mathbb H_{n+1} \right)
\end{align}
and
\begin{align}
\label{eq:micro-model-SS_2}
2\frac{\partial \mathbb Q_{n+1}^{\mathrm{e}\:-1}}{\partial \bm C_{n+1}}=\frac{\partial \mathbb Q^{\mathrm{e}\:-1}_{n+1}}{\partial \bm E_{n+1}^\mathrm{e}}:2\frac{\partial \bm E^\mathrm{e}_{n+1}}{\partial \bm C_{n+1}}=2 \bm{\mathcal K}_{n+1}^{\mathrm{e}}:\left(\mathbb Q_{n+1} - \mathbb H_{n+1} \right).
\end{align}
where $\mathbb H_{n+1} := 2\partial \bm E^\mathrm{v}_{n+1}/\partial \bm C_{n+1}$ is determined based on the constitutive integration scheme. In our proposed method, the discrete evolution equation \eqref{eq:mid-point} leads to
\begin{align*}
\mathbb H_{n+1} &:= 2\frac{\partial \bm E^\mathrm{v}_{n+1}}{\partial \bm C_{n+1}}=   \frac{2\Delta t_n}{\eta}\frac{\partial \left(\bm S^\mathrm{e}_{n+\frac12}:\mathbb Q^{\mathrm{e}\:-1}_{n+\frac12}  \right)}{\partial \bm C_{n+1}}\\
&=\frac{\Delta t_n}{\eta} \left(
\mathbb Q^\mathrm{e\:-T}_{n+\frac12}:\frac12\mathbb C^\mathrm{e}_{n+\frac12}:\mathbb Q^{\mathrm{e}\:-1}_{n+1}+\bm S^\mathrm{e}_{n+\frac12}: \bm{\mathcal K}^{\mathrm{e}}_{n+\frac12}:\mathbb Q^\mathrm{e}_{n+\frac12}:\mathbb Q^{\mathrm{e}\:-1}_{n+1} \right)
: \left(\mathbb Q_{n+1}-\mathbb H_{n+1}\right) \\
&= \left( \mathbb K_{n+1} - \mathbb I \right) : \left( \mathbb Q_{n+1}-\mathbb H_{n+1} \right).
\end{align*}
Re-organizing the above relation leads to
\begin{align*}
\mathbb K_{n+1} : \mathbb H_{n+1} = \left( \mathbb K_{n+1} - \mathbb I \right) :  \mathbb Q_{n+1}.
\end{align*}
The rank-four tensor $\mathbb H_{n+1}$ is determined through solving the above equation. Interestingly, the formulation of the above equation is based on $\mathbb K_{n+1}$, which has already been obtained in \eqref{eq:mid-point-KK-def}. With $\mathbb H_{n+1}$ determined, the formula of $\mathbb C^\mathrm{neq}_{\mathrm{a}\:n+1}$ can be re-organized as
\begin{align*}
\mathbb C^\mathrm{neq}_{\mathrm{a}\:n+1}&= \mathbb Q_{n+1}^\mathrm{T}: \left(
\mathbb Q^\mathrm{e\:-T}_{n+1}:\mathbb C^\mathrm{e}_{n+1}:\mathbb Q^{\mathrm{e}\:-1}_{n+1}+2\bm S^\mathrm{e}_{n+1}: \bm{\mathcal K}^{\mathrm{e}}_{n+1} \right)
: \left(\mathbb Q_{n+1}-\mathbb H_{n+1}\right) + \bm T_{n+1}^\mathrm{neq}: \bm{\mathcal L}_{n+1}
\end{align*}
by substituting \eqref{eq:micro-model-SS_1} and \eqref{eq:micro-model-SS_2} into the expression of $\mathbb S_{n+1}$ in \eqref{eq:micro-model-CC-neq}. With the above expression of $\mathbb C^\mathrm{neq}_{\mathrm{a}\:n+1}$, we complete the derivation of the elasticity tensor to be used in the solution procedure of the linear momentum balance equation.

\section{Results}
\label{sec:results}
This section presents the results of model calibration and finite element analysis to examine, validate, and compare the constitutive theories proposed in this work. In Section \ref{sec:exp_VHB_4910}, the proposed models are calibrated using the experimental data of VHB 4910 from \cite{Hossain2012}. The results demonstrate the capability and potential advantages of the proposed modeling framework in characterizing the representative viscoelastomer. In Section \ref{sec:results_FEM}, finite element analysis is performed on the proposed models to investigate the influence of generalized strains and kinematic decomposition on the dissipation behavior. In both calibration and simulations, we consider the finite linear viscoelasticity (FLV) models presented in Section \ref{sec:quad-energy-model} and the micromechanical models developed in Section \ref{sec:micromechanical-model}. As for the former, the model is given by the energy in the form $G = G_{\mathrm{ich}}^{\infty} + \Upsilon + G_{\mathrm{vol}}$, with
\begin{align}
\label{eq:sec-results-flv-model}
G^{\infty}_{\mathrm{ich}}(\tilde{\bm C}) = \sum_{\beta = 1}^{N}\frac{\mu^\infty_\beta}{2}\left|\tilde{\bm E}_\beta^\infty(\tilde{\bm C}) \right|^2, \quad
\Upsilon(\tilde{\bm C}, \bm E^{\mathrm v}_1, \cdots, \bm E^{\mathrm v}_{M}) = \sum_{\alpha =1 }^{M}\frac{\mu^\mathrm{neq}_\alpha}{2}\left| \tilde{\bm E}^\mathrm{neq}_{\alpha}(\tilde{\bm C}) -\bm E^\mathrm{v}_\alpha \right|^2.
\end{align}
Here, $\tilde{\bm E}^{(\cdot)}_{(\cdot)}$ can be instantiated by different generalized strains. For the micromechanical models, the energy is given by $G = G_{\mathrm a}^{\infty} + \Upsilon + G_{\mathrm{vol}}$, with
\begin{align}
\label{eq:sec-results-mm-1}
G^{\infty}_{\mathrm{a}}(\bm C) = \mu^\infty N^\infty \left( \lambda^{\infty} \mathfrak L^{-1}(\lambda^{\infty}) + \ln \frac{\mathfrak L^{-1}(\lambda^{\infty})}{\sinh(\mathfrak L^{-1}(\lambda^{\infty}))} \right) - \frac{\mu^\infty\sqrt{N^\infty}}{3} \mathfrak L^{-1}(1/\sqrt{N^\infty}) \ln(J)
\end{align}
and
\begin{align}
\label{eq:sec-results-mm-2}
\Upsilon(\bm C^{\mathrm e}) = \mu^\mathrm{neq} N^\mathrm{neq} \left( \lambda^{\mathrm{neq}} \mathfrak L^{-1}(\lambda^{\mathrm{neq}}) + \ln \frac{\mathfrak L^{-1}(\lambda^{\mathrm{neq}})}{\sinh(\mathfrak L^{-1}(\lambda^{\mathrm{neq}}))} \right) - \frac{\mu^\mathrm{neq} \sqrt{N^\mathrm{neq}}}{3} \mathfrak L^{-1}(1/\sqrt{N^\mathrm{neq}}) \ln(J^{\mathrm e}).
\end{align}
In particular, we adopt a single relaxation process for the micromechanical models in this study. For the micromechanical model, we also invoke the multiplicative decomposition for comparison purposes.

\subsection{Model calibration based on VHB 4910}
\label{sec:exp_VHB_4910}
VHB 4910 is an acrylic polymer capable of undergoing large deformations with pronounced dissipative behavior. Mechanical characterization of this material is critical for understanding its performance in novel devices. In this study, experimental data are obtained from \cite{Hossain2012}. A suite of uniaxial tests was conducted on specimens with a length-to-width ratio of 10:1, ensuring uniaxial stress conditions throughout the experiment. Tensile loading was applied at three strain rates of $0.01$s$^{-1}$, $0.03$s$^{-1}$, and $0.05$s$^{-1}$, with final strains extending from $50\%$ to $200\%$, followed by unloading to zero stress. Nominal stress values are calculated through dividing the applied force by the specimen’s original cross-sectional area. The experimental data reveals significant rate-dependent hysteresis and stress relaxation, which are used for examining the proposed material models.

\begin{table}[htbp]
\centering
\begin{tabular}{>{\centering\arraybackslash}m{4cm} >{\centering\arraybackslash}m{2cm} >{\centering\arraybackslash}m{2cm} >{\centering\arraybackslash}m{2cm}}
\toprule
& \multicolumn{3}{c}{\textbf{Stretch rate (s$^{-1}$)}} \\
\cmidrule(lr){2-4}
\textbf{The maximum stretch} & \textbf{0.01} & \textbf{0.03} & \textbf{0.05} \\
\midrule
\textbf{1.5} & 80 & 83 & 86 \\
\textbf{2.0} & 80 & 83 & 86 \\
\textbf{2.5} & 86 & 88 & 91 \\
\textbf{3.0} & 85 & - & 91 \\
\bottomrule
\end{tabular}
\caption{The number of data points corresponding to different maximum stretches and strain rates of the uniaxial tests extracted from \cite{Hossain2012}. }
\label{table:Hossain_data}
\end{table}

The metric used for calibration and evaluation is NMAD, as defined in \eqref{eq:NMAD}. The numbers of data points in different loading cases are listed in Table \ref{table:Hossain_data}. We mention that for the case with a maximum stretch of $3.0$, the data for the strain rate of $0.03$ s$^{-1}$ is unavailable. Within the FLV framework, three different types of generalized strains are employed within the quadratic energy formulation. For the micromechanical models, fittings are conducted based on two distinct kinematic assumptions: the multiplicative decomposition and the additive decomposition. For the latter, there remain options for the choice of generalized strains, and we opt for the Hencky and Curnier-Rakotomanana strains.

\begin{figure}
\begin{center}
\begin{tabular}{ccc}
\includegraphics[angle=0, trim=50 30 160 120, clip=true, scale=0.075]{./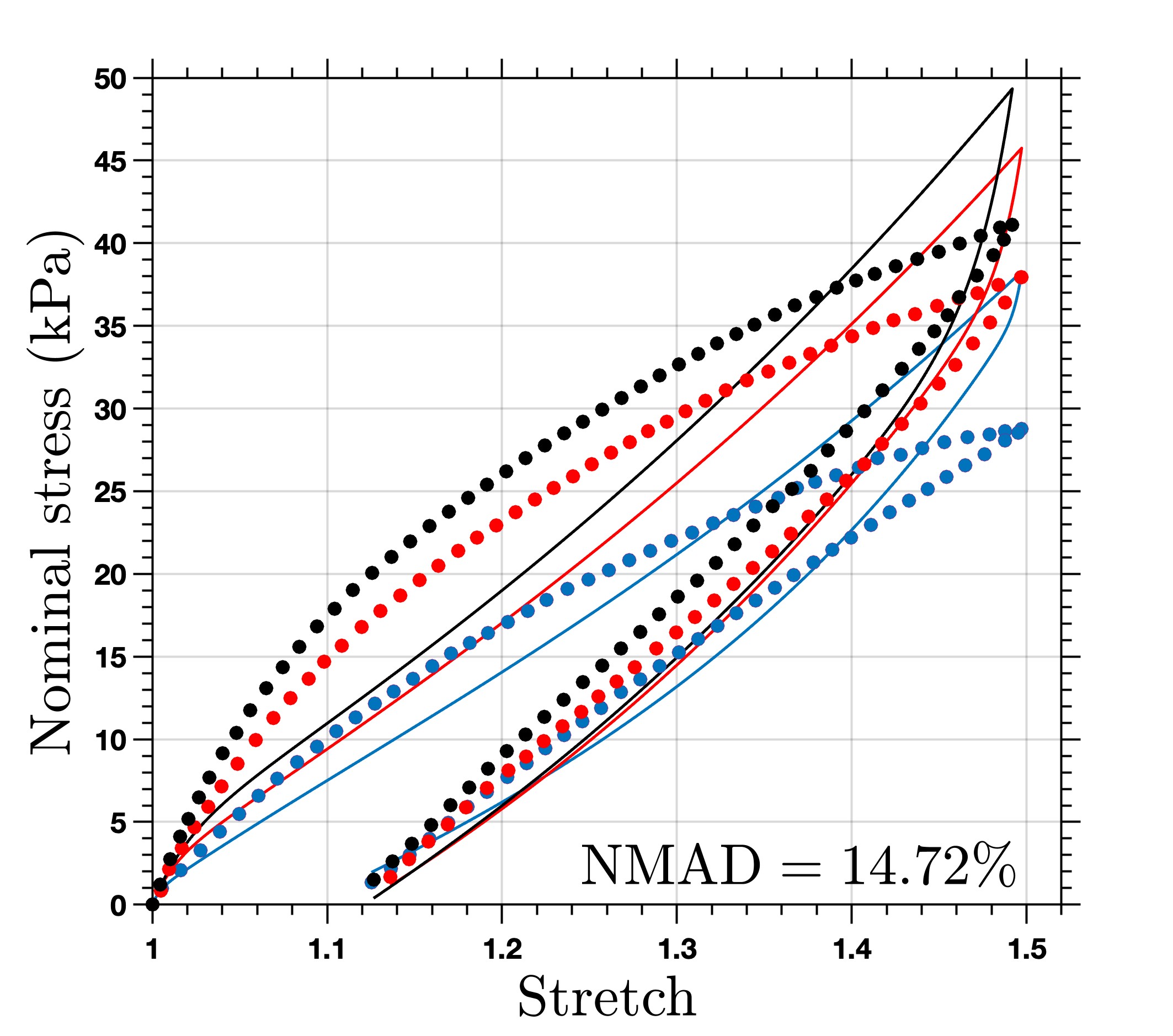} &
\includegraphics[angle=0, trim=50 30 160 120, clip=true, scale=0.075]{./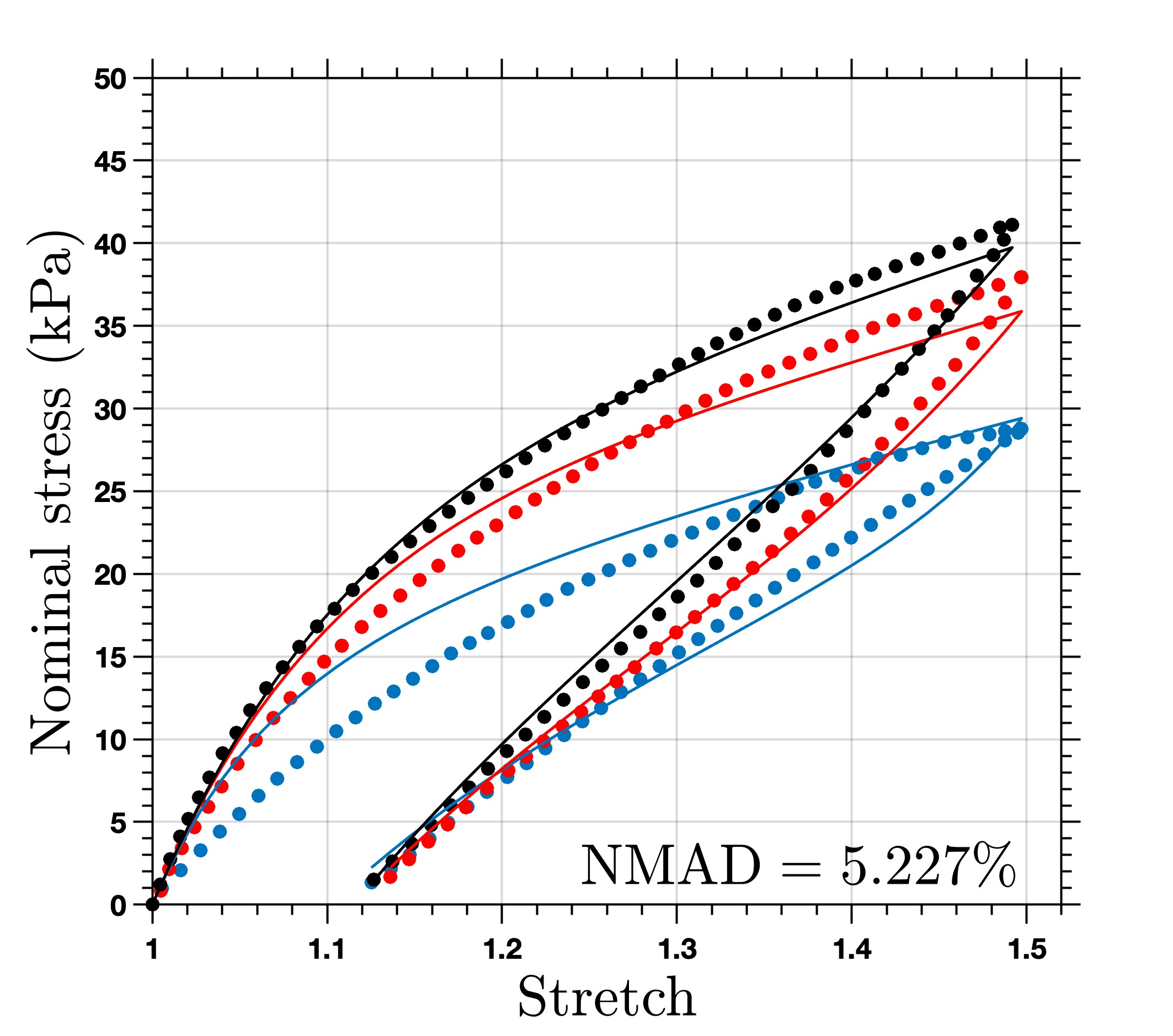} &
\includegraphics[angle=0, trim=50 30 160 120, clip=true, scale=0.075]{./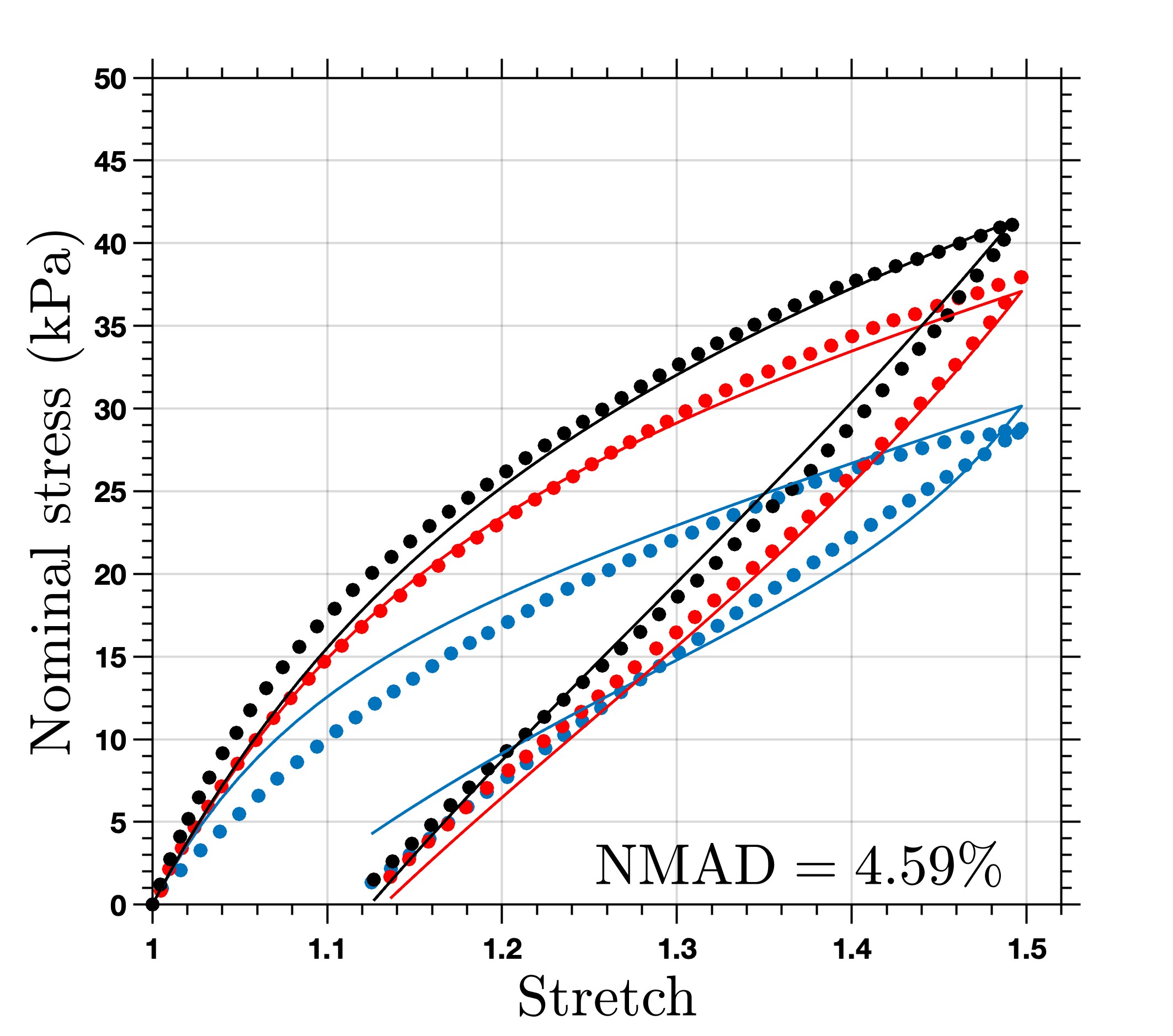} \\
\includegraphics[angle=0, trim=50 30 160 120, clip=true, scale=0.075]{./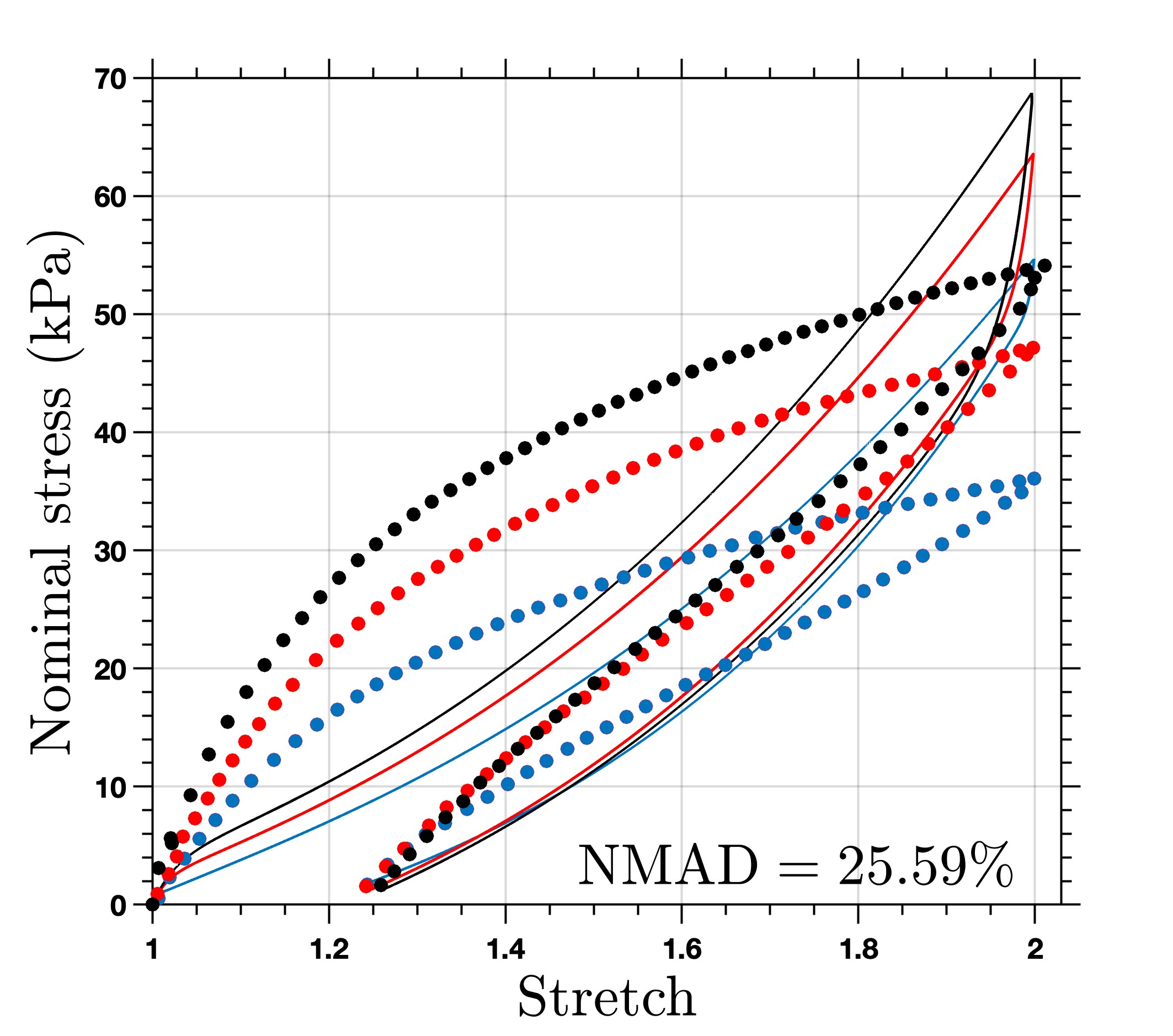} &
\includegraphics[angle=0, trim=50 30 160 120, clip=true, scale=0.075]{./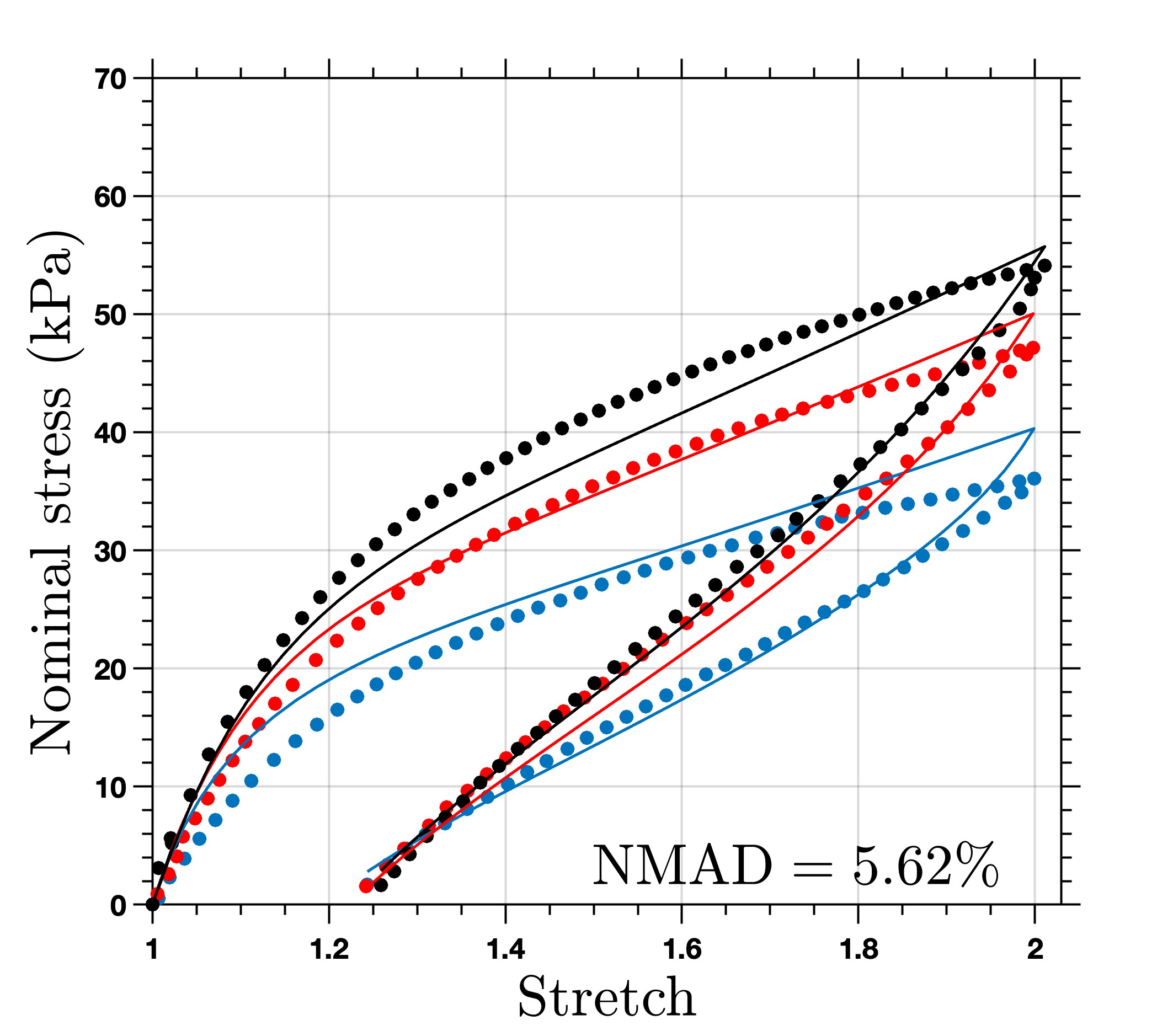} &
\includegraphics[angle=0, trim=50 30 160 120, clip=true, scale=0.075]{./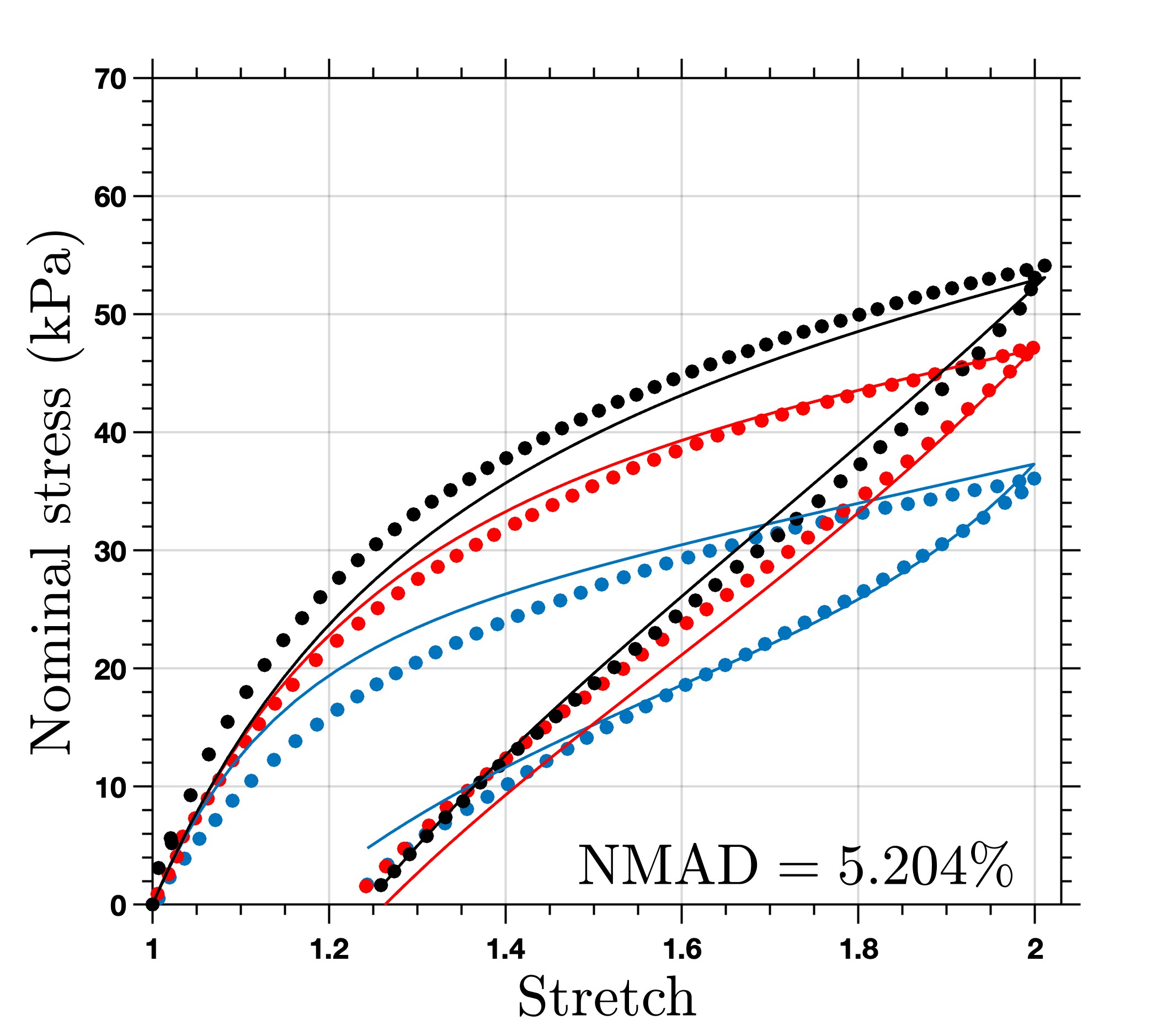} \\
\includegraphics[angle=0, trim=50 30 160 120, clip=true, scale=0.075]{./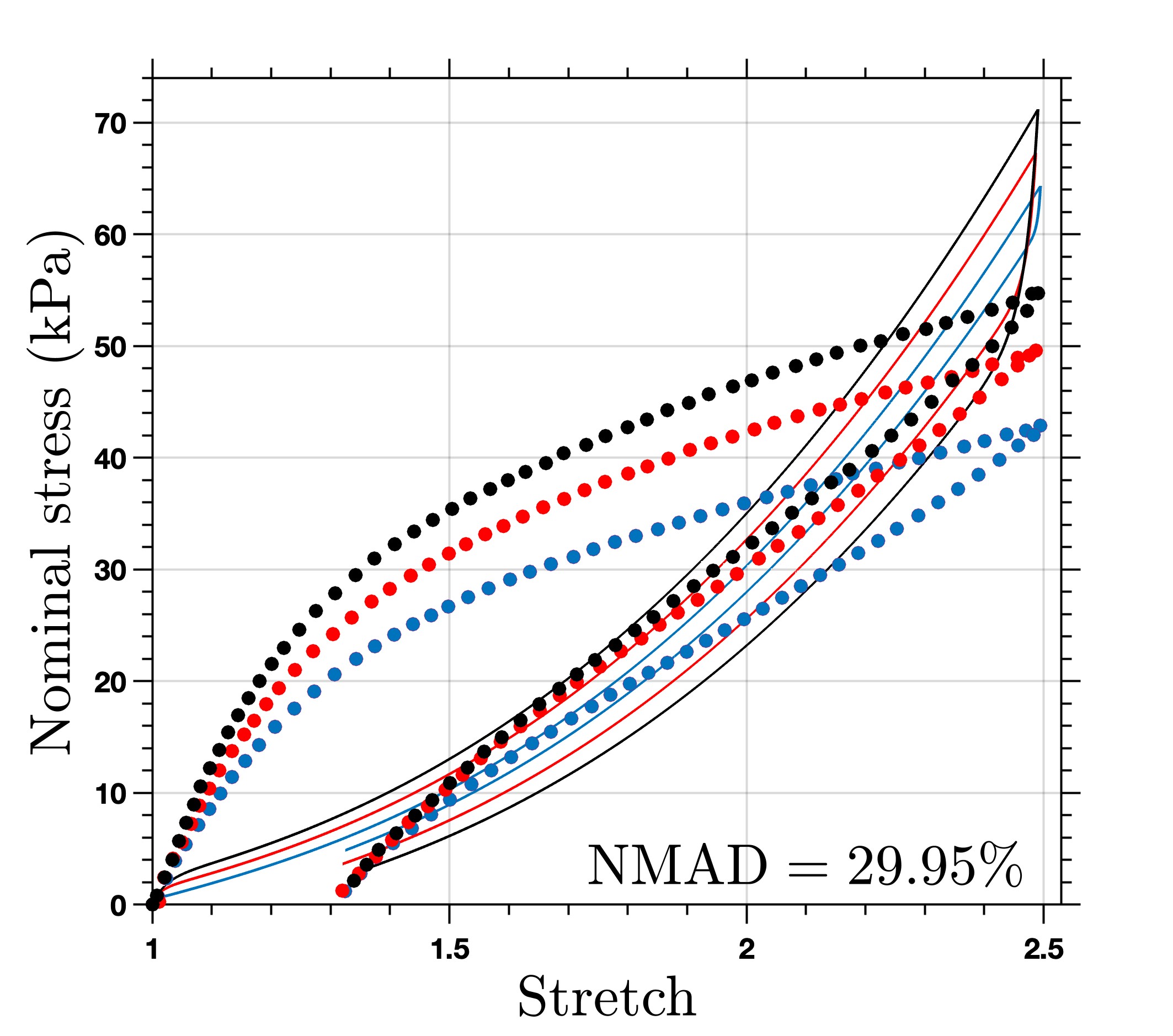} &
\includegraphics[angle=0, trim=50 30 160 120, clip=true, scale=0.075]{./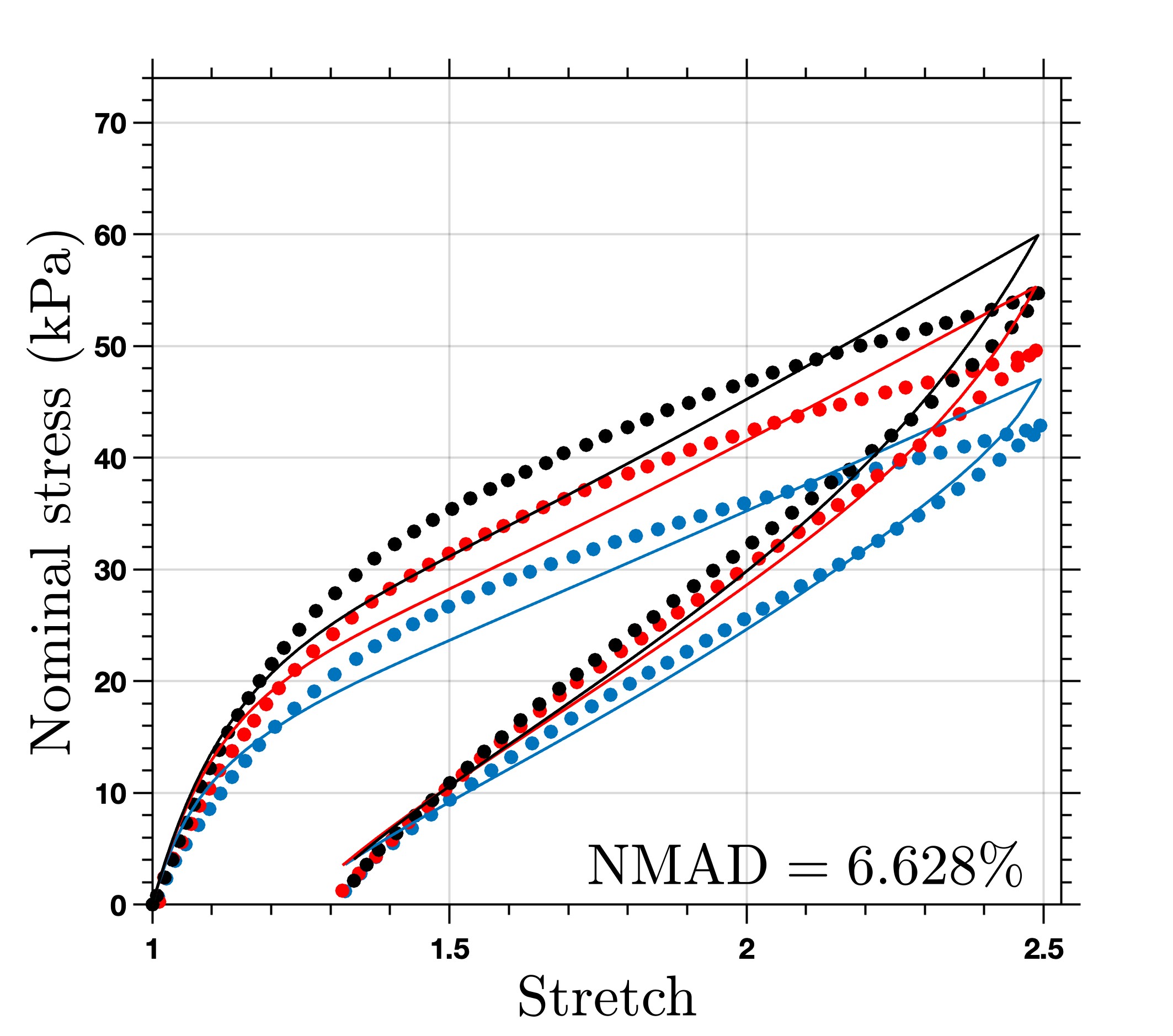} &
\includegraphics[angle=0, trim=50 30 160 120, clip=true, scale=0.075]{./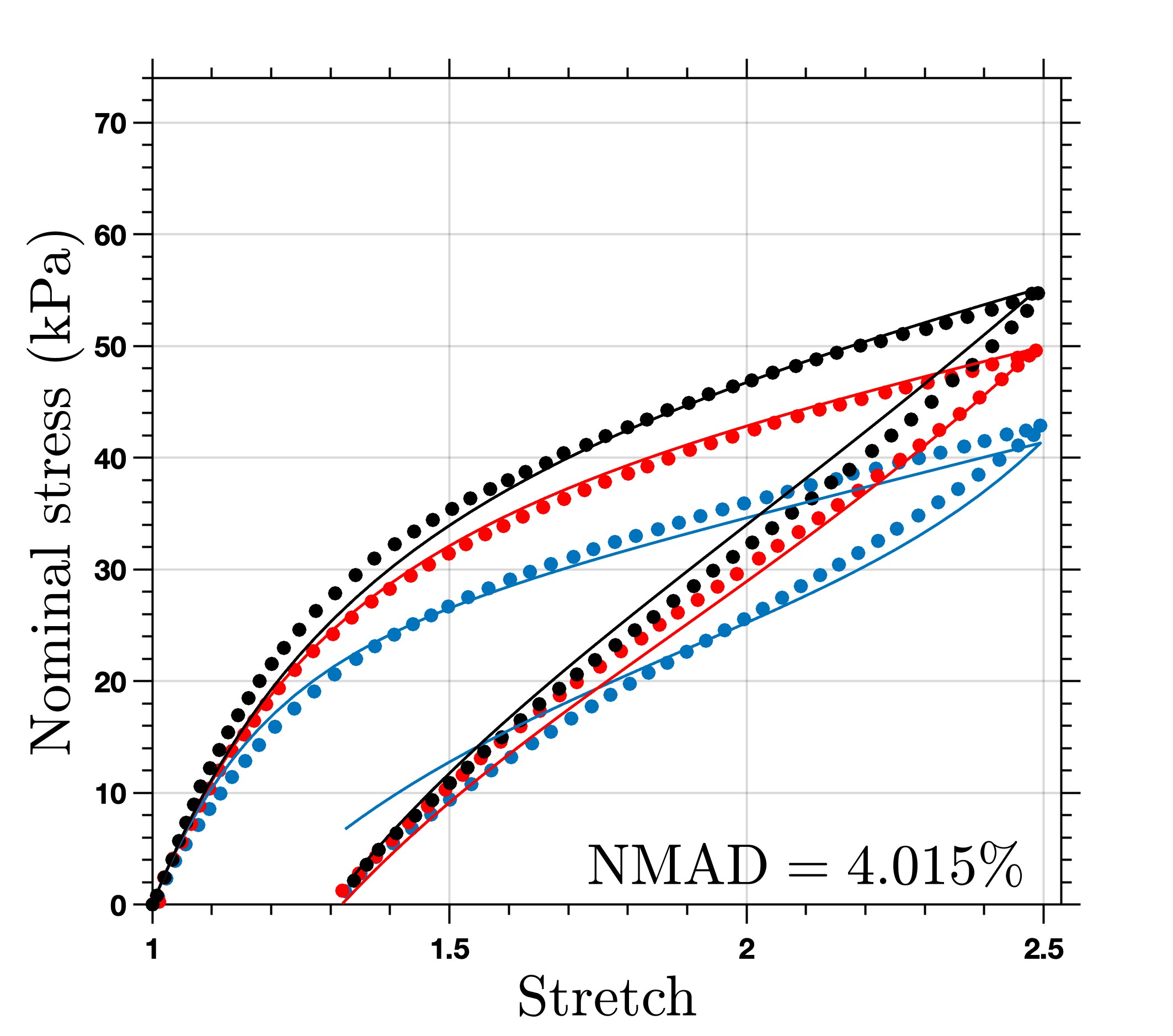} \\
\includegraphics[angle=0, trim=50 30 160 120, clip=true, scale=0.075]{./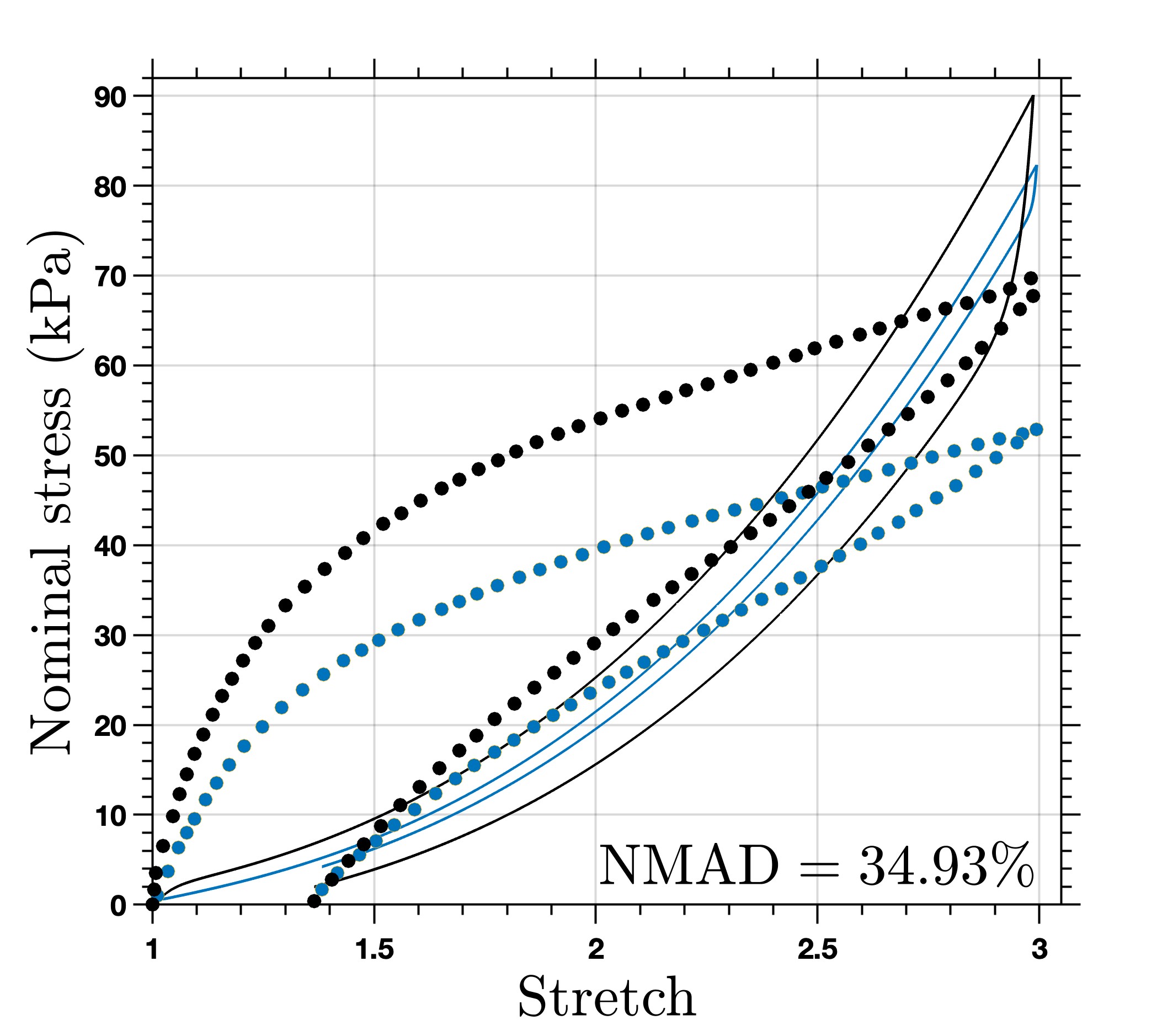} &
\includegraphics[angle=0, trim=50 30 160 120, clip=true, scale=0.075]{./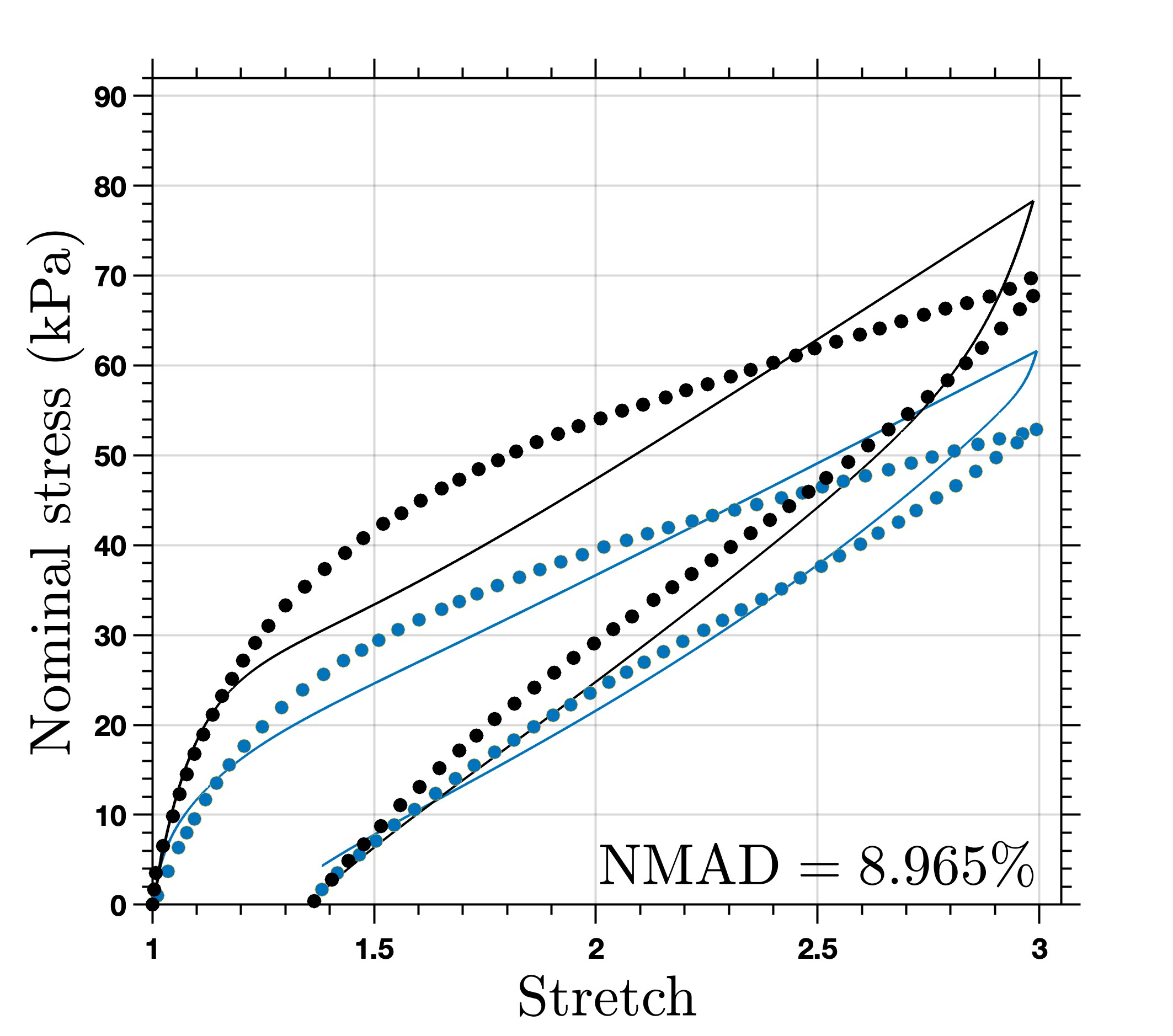} &
\includegraphics[angle=0, trim=50 30 160 120, clip=true, scale=0.075]{./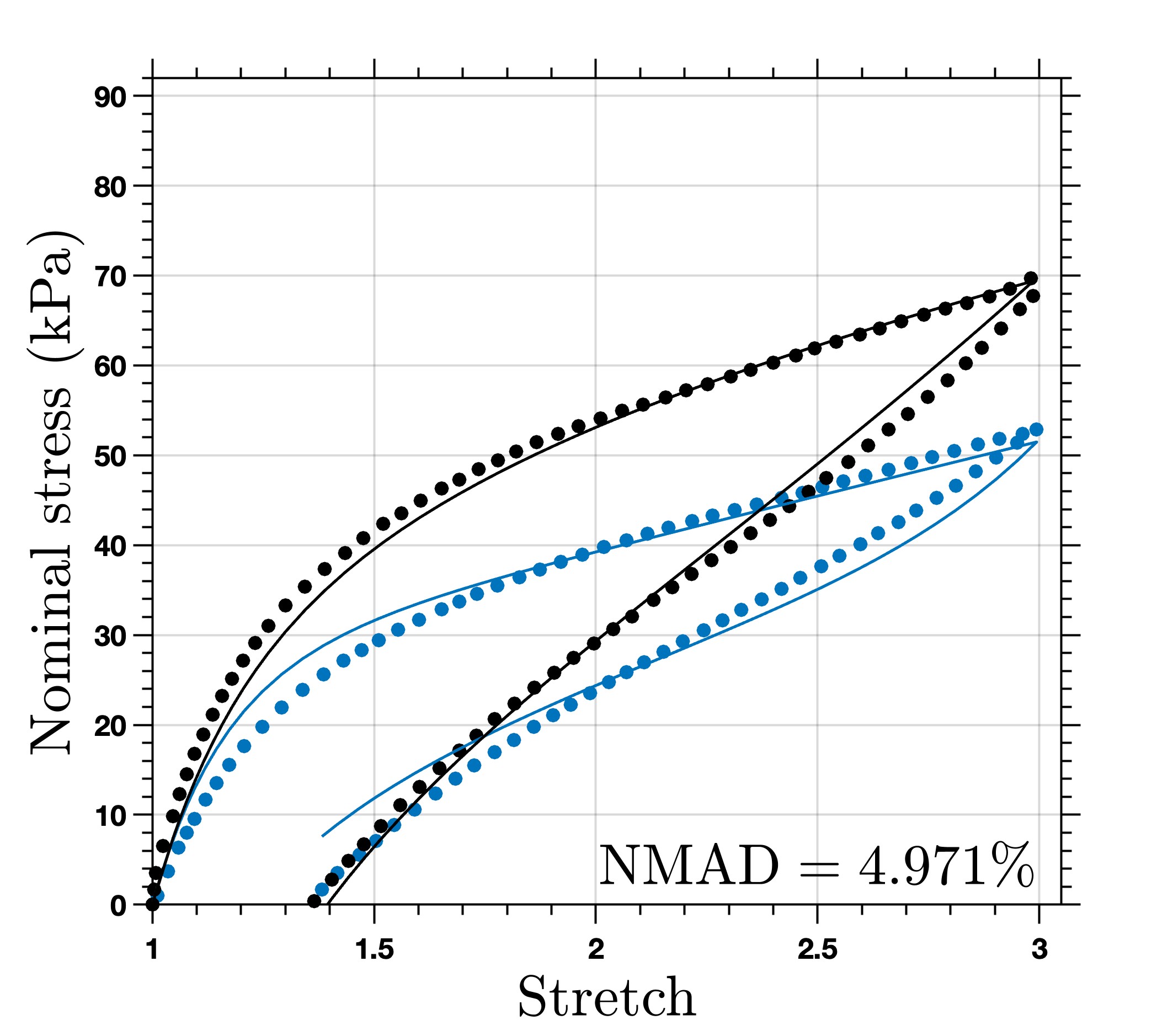} \\
Green-Lagrange strain & Euler-Almansi strain & Curnier-Rakotomanana strain
\end{tabular}
\end{center}
\caption{The results of simultaneous fitting for different maximum stretches using the FLV model with the Green-Lagrange strain (left), Euler-Almansi strain (middle), and Curnier-Rakotomanana strain (right). The fittings are performed on uniaxial test data from \cite{Hossain2012}, corresponding to stretch rates of $0.01$ s$^{-1}$ (blue), $0.03$ s$^{-1}$ (red), and $0.05$ s$^{-1}$ (black).}
\label{fig:results_quad_Hossain}
\end{figure}

\paragraph{Calibration of the finite linear viscoelasticity models}
We evaluate the modeling capability of the quadratic energy form with a variety of generalized strains. Simultaneous fittings are performed for each given maximum stretch, using data of distinct strain rates. In the first case, the energy is constructed with $N=1$ and $M=2$, using the Green-Lagrange strain. This choice is related to the model of \cite{Simo1987} and involves a total of $5$ parameters. The fitting results, presented in the left column of Figure \ref{fig:results_quad_Hossain}, indicate that the model demonstrates rate dependence across different maximum stretches. However, using the Green-Lagrange strain fails to accurately capture the actual material behavior and exhibits a significant hardening near the maximum stretch.

In the second case, the energy is also constructed with $N=1$ and $M=2$, using the Euler-Almansi strain. This choice aligns with the model of \cite{Green1946} and \cite{Lubliner1985} and also involves $5$ parameters in total. The fitting results, shown in the middle column of Figure \ref{fig:results_quad_Hossain}, demonstrate that the model effectively captures the rate-dependent behavior and more accurately characterizes the stress-strain curve. Nevertheless, as the stretch increases to $3.0$, the model exhibits hardening, deviating from the experimental data significantly. For the quadratic energy considered here, the type of the generalized strain dictates the overall stress-strain curve, while the remaining material parameters (such as the shear moduli and viscosities) influence characteristics like the slope of the curve. With this observation, we proceed to calibrate the model with the strain parameters taken into account.

In the third case, the energy is constructed with $N=M=1$, and the generalized strains are characterized using the Curnier-Rakotomanana strain, with the strain parameters subjected to optimization. Therefore, in addition to $\mu_1^{\infty}$, $\mu_1^{\mathrm{neq}}$, and $\eta$, there are two additional strain parameters, leading to a total of $5$ parameters. The fitting results, presented in the right column of Figure \ref{fig:results_quad_Hossain}, demonstrate that the model accurately captures the rate-dependent behavior of VHB 4910 during uniaxial loading-unloading tests conducted at three different stretch rates. The optimized parameters reveal that the generalized strain parameters effectively improve the overall stress-strain curve, especially when the maximum stretch exceeds $2.0$.

For the first and second cases, we also conducted calibrations with $M=3$. Yet, it is observed that increasing the number of relaxation processes does not improve the performance of the FLV models if a well-defined generalized strain is absent. Therefore, incorporating suitable generalized strains into the calibration process is crucial for accurately capturing the mechanical behavior of the material. Additionally, the calibrated viscosity steadily increases with larger maximum stretch values, indicating the viscosity of VHB 4910 exhibits non-Newtonian behavior.

\begin{figure}
\begin{center}
\begin{tabular}{cc}
\includegraphics[angle=0, trim=50 30 160 120, clip=true, scale=0.075]{./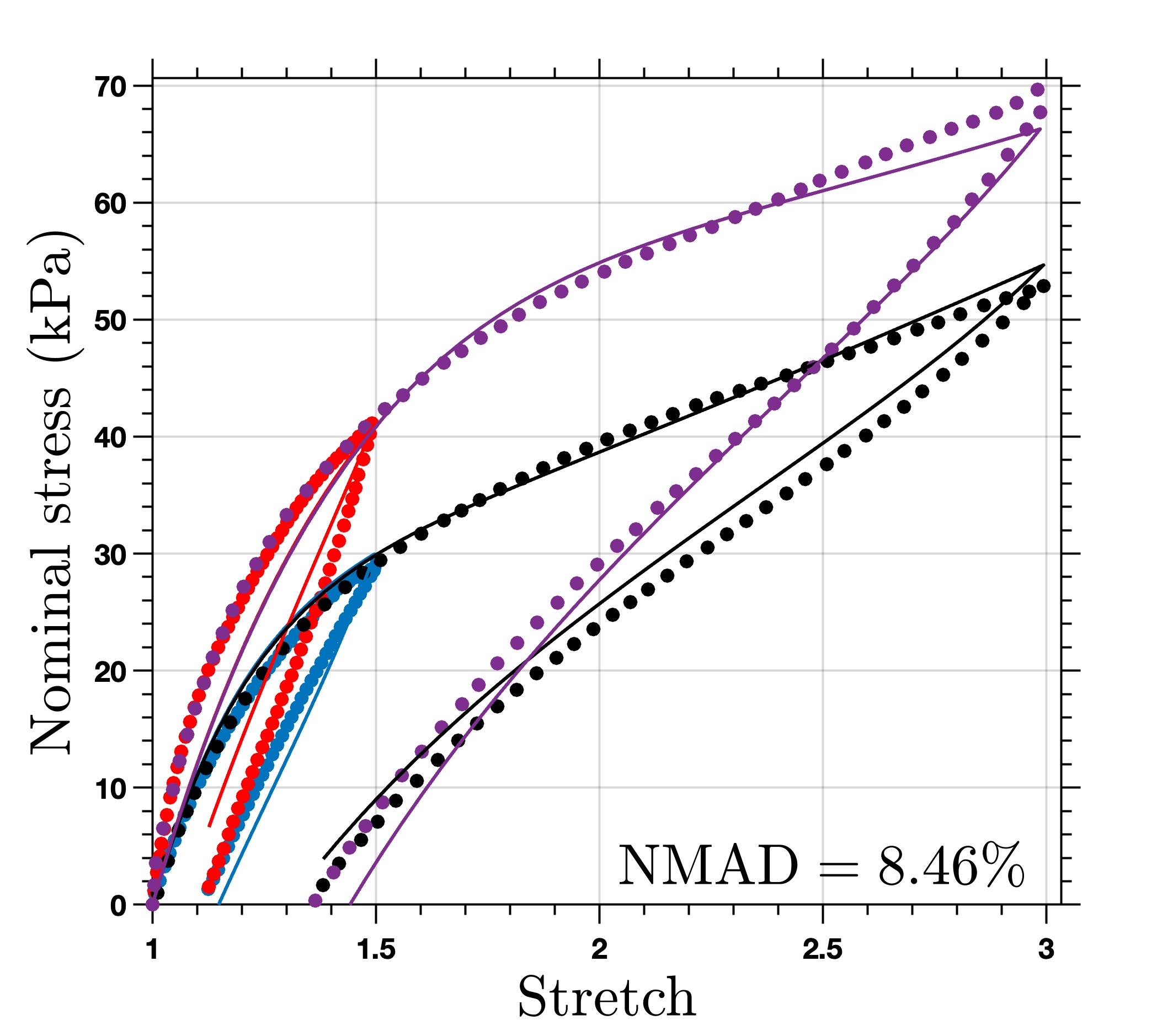} &
\includegraphics[angle=0, trim=50 30 160 120, clip=true, scale=0.075]{./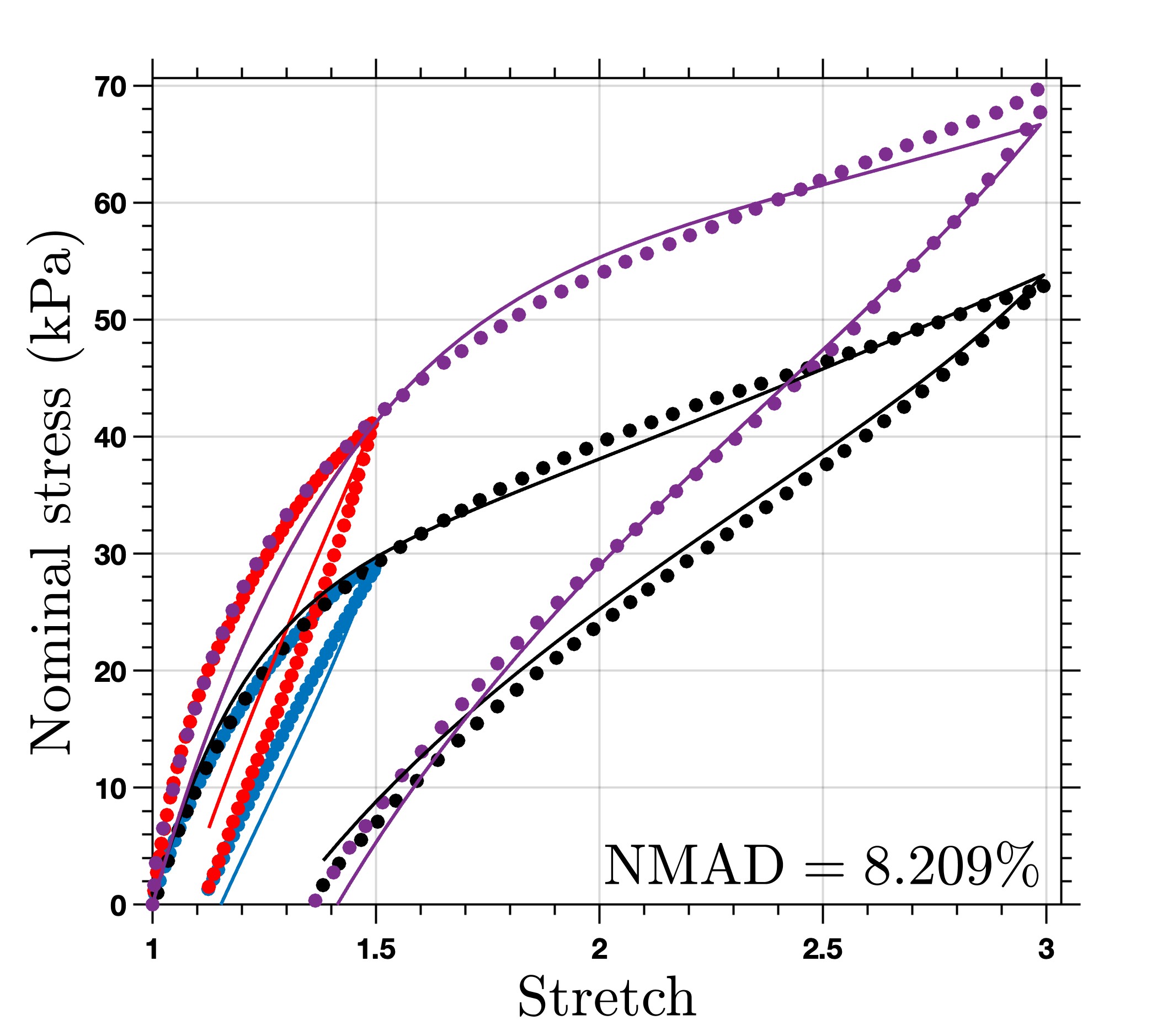} \\
(a) & (b) \\
\includegraphics[angle=0, trim=50 30 160 120, clip=true, scale=0.075]{./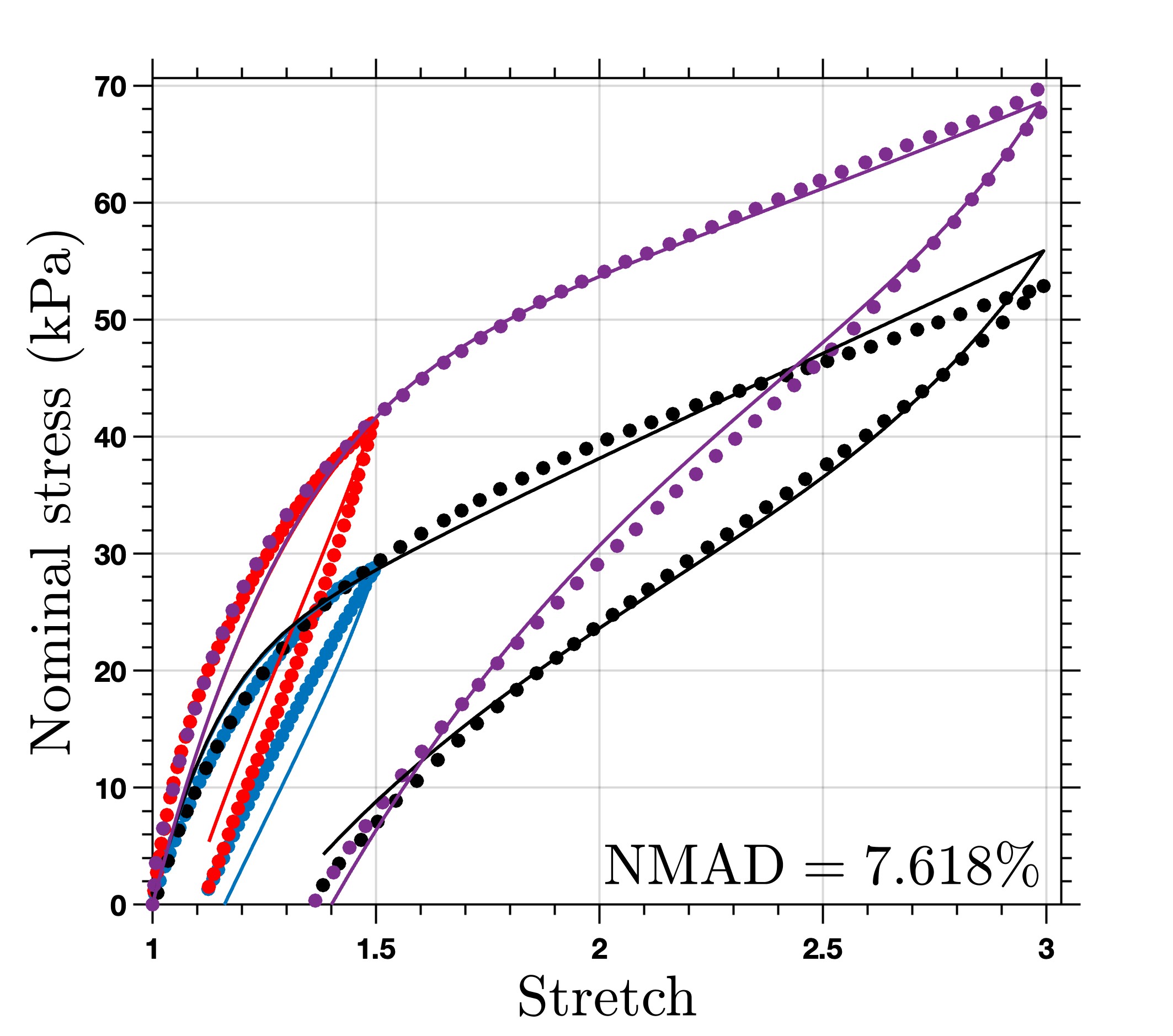} &
\includegraphics[angle=0, trim=50 30 160 120, clip=true, scale=0.075]{./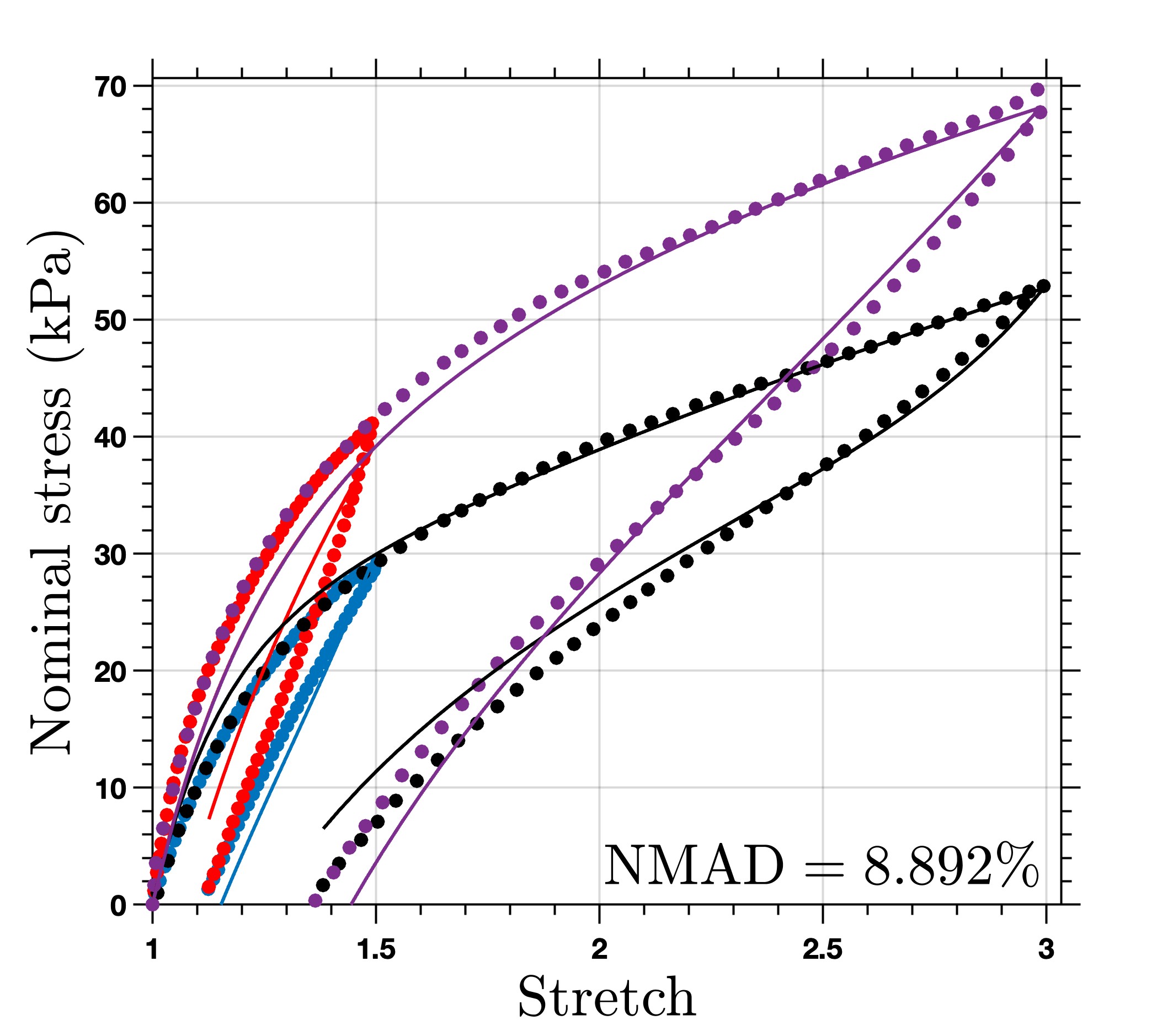} \\
(c) & (d)
\end{tabular}
\end{center}
\caption{Simultaneous fitting results with stretch rates of $0.01$ s$^{-1}$ (blue and black) and $0.05$ s$^{-1}$ (red and purple): (a) micromechanical model with the multiplicative decomposition, (b) micromechanical model with the Hencky strain, (c) micromechanical model with the Curnier-Rakotomanana strain, (d) FLV model with the Curnier-Rakotomanana strain.}
\label{fig:results_micro_compare}
\end{figure}

\paragraph{Calibration of the micromechanical models}
We assess the micromechanical models proposed in Section \ref{sec:micromechanical-model} through model calibration. Experimental data sets with maximum stretches of $1.5$ and $3.0$ at stretch rates of $0.01$ and $0.05$ s$^{-1}$ are used in the simultaneous fitting. The values of NMAD are calculated based on the $\mathcal M=4$ data sets. In the case of multiplicative decomposition, the elastic deformation is constructed by $\bar{\bm F}^\mathrm{e\:T} \bar{\bm F}^\mathrm{e}$, as detailed in Section \ref{sec:choice-of-ISV}. Figure \ref{fig:results_micro_compare} (a) and (b) present the results of the micromechanical models based on the multiplicative decomposition and the additive decomposition with the Hencky strain. Both models effectively capture the rate-dependent behavior and characterize the strain-stress curve, yielding similar NMAD values. This outcome, to some extent, is expected, as the two kinematic assumptions are identical when $\bar{\bm F}^{\mathrm v}$ is a pure stretch tensor coaxial with $\bm C$, see Appendix C of \cite{Liu2024}.

\begin{table}[h]
\centering
\begin{tabular}{>{\centering\arraybackslash}m{1.0cm} >{\centering\arraybackslash}m{1.5cm} >{\centering\arraybackslash}m{1.5cm} >{\centering\arraybackslash}m{0.75cm}>{\centering\arraybackslash}m{0.75cm}>{\centering\arraybackslash}m{1.5cm} >{\centering\arraybackslash}m{1.0cm} >{\centering\arraybackslash}m{0.75cm} >{\centering\arraybackslash}m{0.75cm} >{\centering\arraybackslash}m{1.5cm} >{\centering\arraybackslash}m{2.0cm}}
\toprule
\textbf{Figure} & $\boldsymbol{\mu^\infty}\:\mathbf{(kPa)}$  & $\boldsymbol{N^\infty}$ & $\boldsymbol{m^\infty}$ & $\boldsymbol{n^\infty}$ & $\boldsymbol{\mu^\mathrm{neq}}\:\mathbf{(kPa)}$  & $\boldsymbol{N^\mathrm{neq}}$ & $\boldsymbol{m^\mathrm{neq}}$ & $\boldsymbol{n^\mathrm{neq}}$ & $\boldsymbol{\eta}\:\mathbf{(kPa\cdot s)}$ & \textbf{NMAD(\%)} \\
\midrule
\textbf{\ref{fig:results_micro_compare} (a)} & 17.64 & 303.23 & -  & - & 27.11& 350.24 & - & - & 434.30 & 8.46\\
\midrule
\textbf{\ref{fig:results_micro_compare} (b)} & 17.48 & 320.29 & -  & - & 27.73& 102.09 & - & - & 1789.94 & 8.209\\
\midrule
\textbf{\ref{fig:results_micro_compare} (c)} & 17.26 & 177.63 & -  & - & 31.50& 122.63 & 0.90 & 1.67 & 1725.83 & 7.618\\
\midrule
\textbf{\ref{fig:results_micro_compare} (d)} & 20.01 & - & 0.82  & 0.25 & 35.25& - & 0.08 & 1.34 & 933.12 & 8.892\\
\bottomrule
\end{tabular}
\caption{Material parameters from the simultaneous fitting results of Figure \ref{fig:results_micro_compare} }
\label{table:parameters_micro}
\end{table}

\begin{figure}
\begin{center}
\begin{tabular}{cc}
\includegraphics[angle=0, trim=50 30 160 120, clip=true, scale=0.075]{./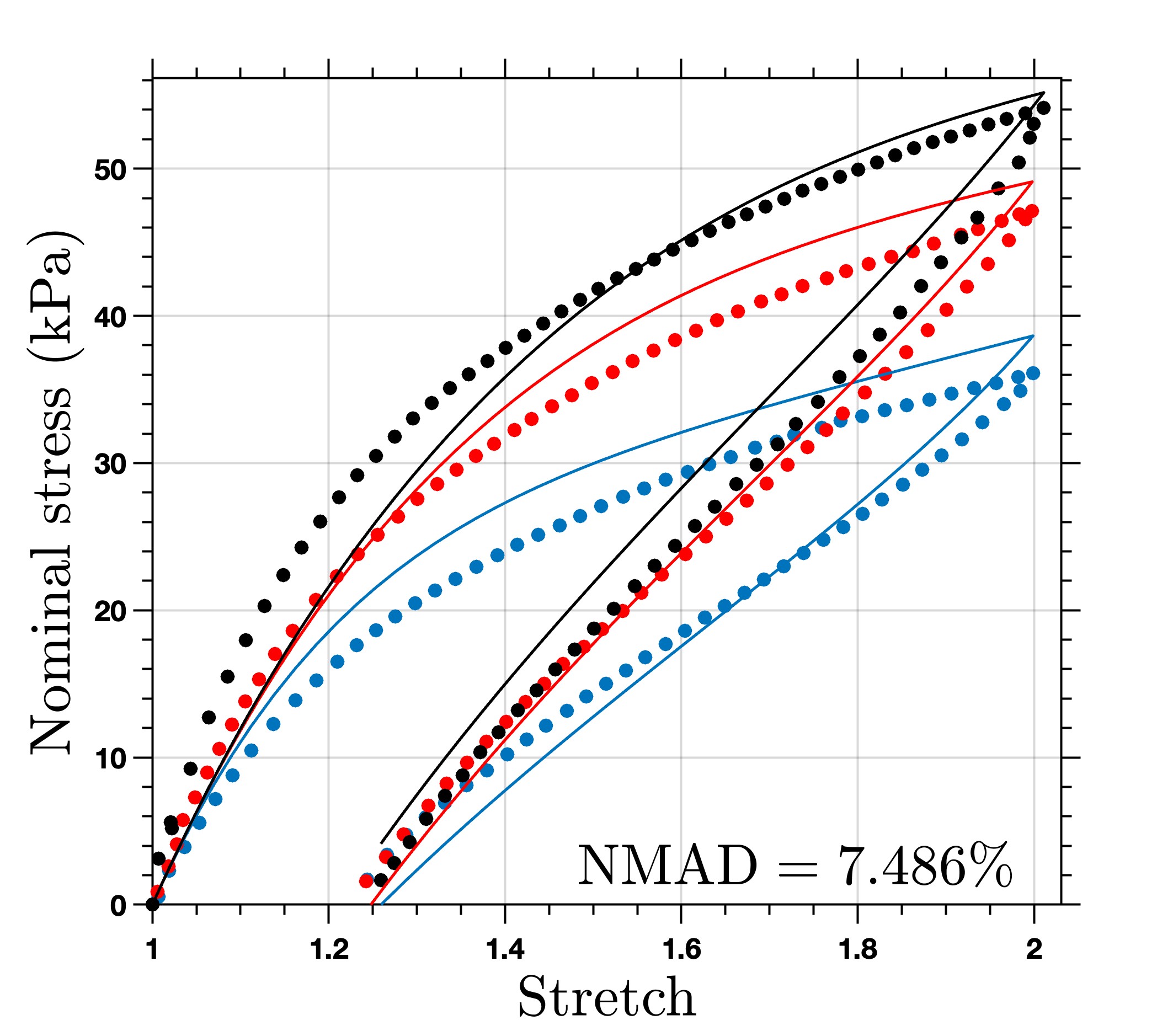}&
\includegraphics[angle=0, trim=50 30 160 120, clip=true, scale=0.075]{./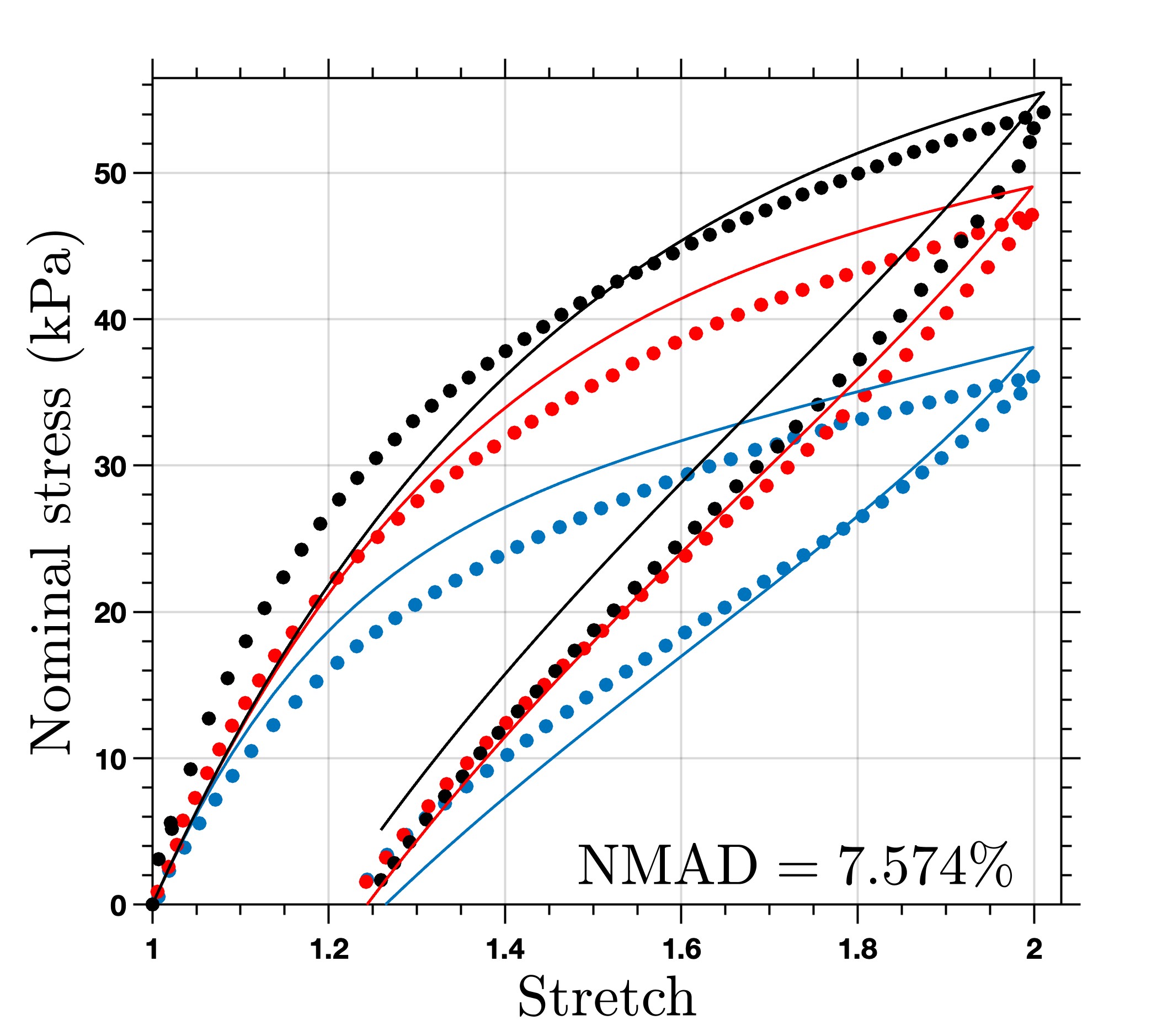} \\
(a) & (b) \\
\includegraphics[angle=0, trim=50 30 160 120, clip=true, scale=0.075]{./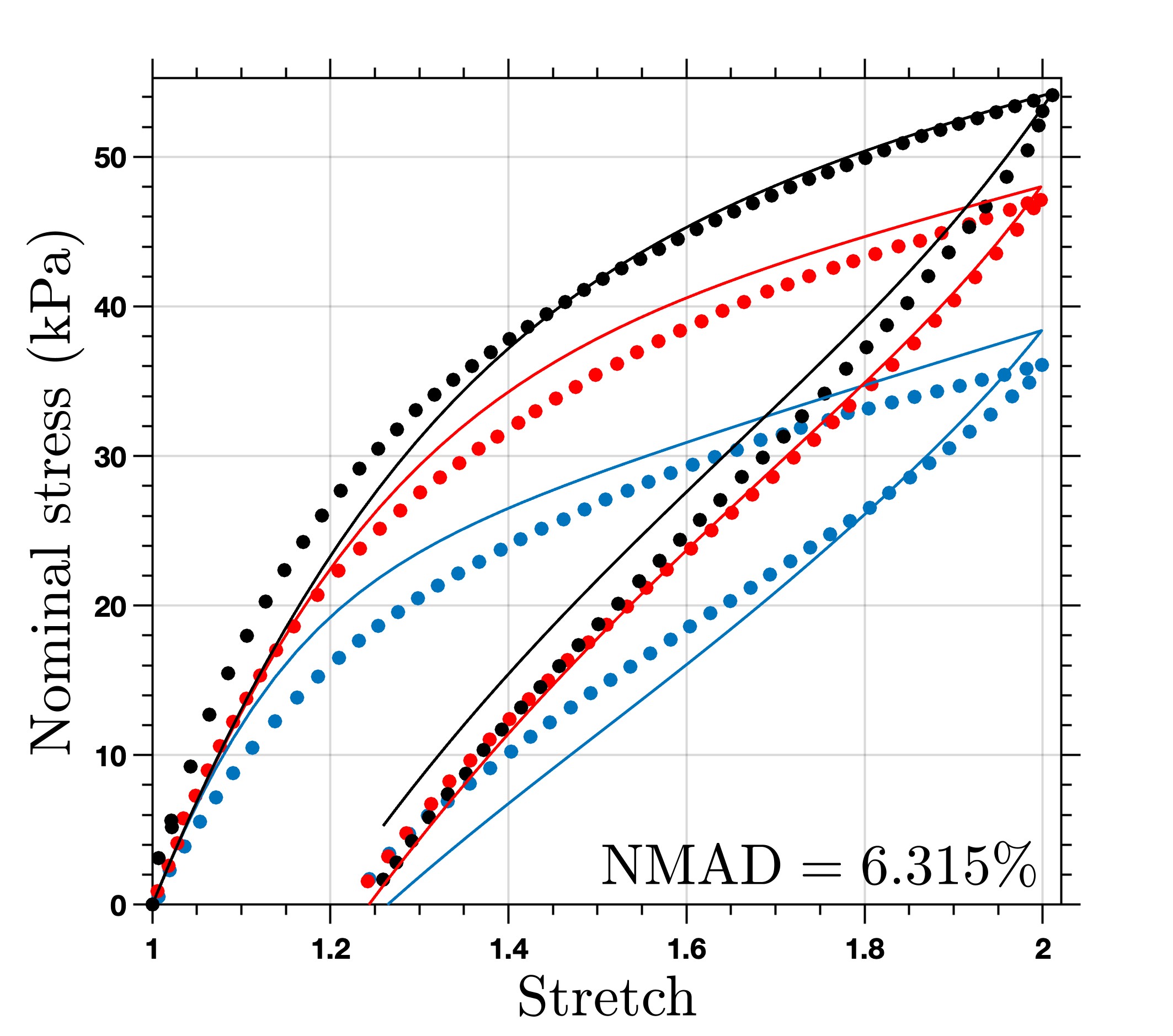} &
\includegraphics[angle=0, trim=50 30 160 120, clip=true, scale=0.075]{./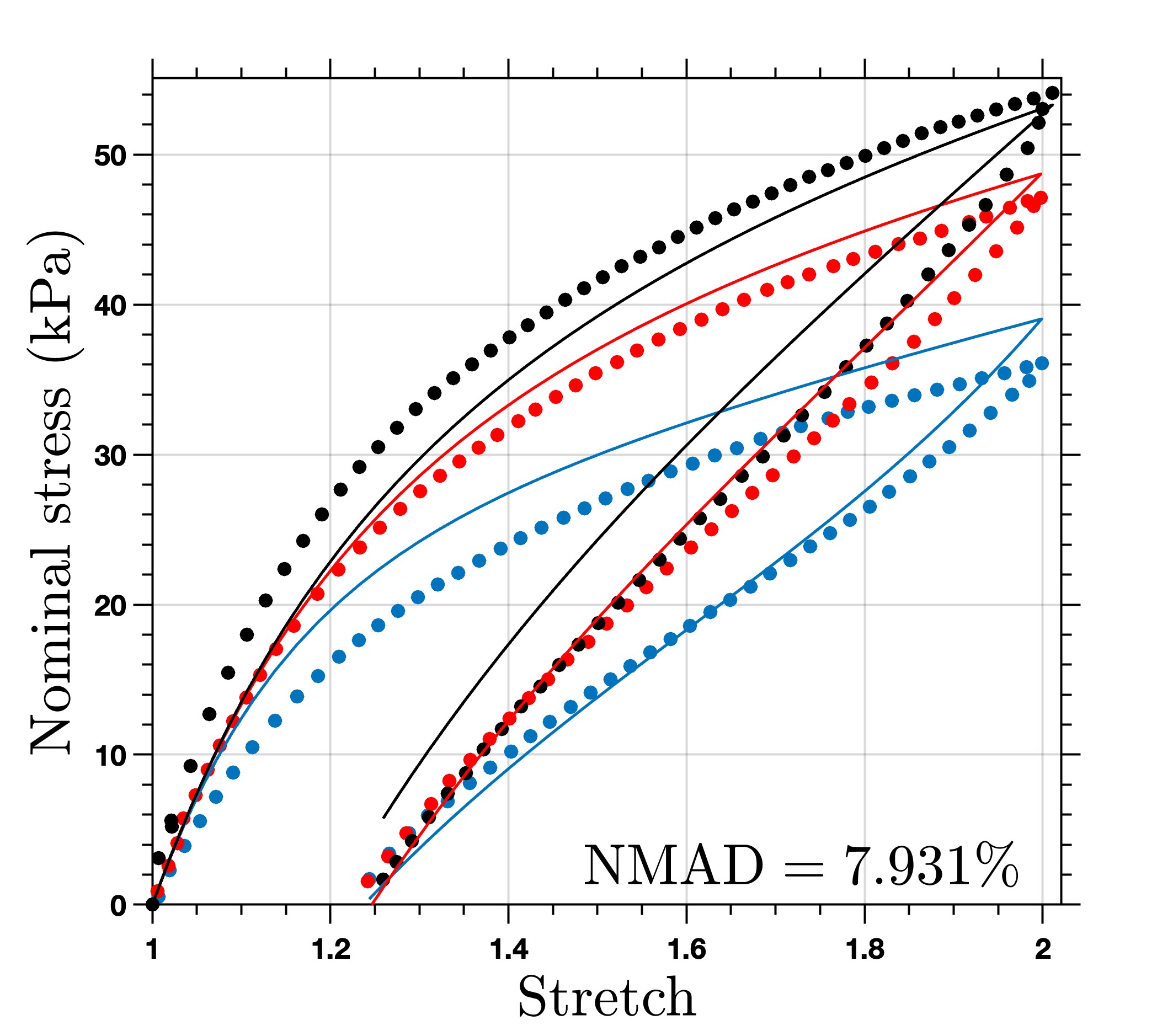} \\
(c) & (d)
\end{tabular}
\end{center}
\caption{Prediction results for the data of the maximum stretch of 2.0 using the parameters listed in Table \ref{table:parameters_micro}: (a) micromechanical model with the multiplicative decomposition, (b) micromechanical model with the Hencky strain, (c) micromechanical model with the Curnier-Rakotomanana strain, (d) FLV model with the Curnier-Rakotomanana strain. The experimental data correspond to stretch rates of $0.01$ s$^{-1}$ (blue), $0.03$ s$^{-1}$ (red), and $0.05$ s$^{-1}$ (black).}
\label{fig:results_micro_prediction_compare}
\end{figure}

The same fitting procedure is performed to evaluate the capability of the FLV model and the proposed micromechanical model, both using the Curnier-Rakotomanana strain. The results, shown in Figure \ref{fig:results_micro_compare} (c) and (d), indicate that both models exhibit robust performance in capturing mechanical behavior. The micromechanical model with the Curnier-Rakotomanana strain demonstrates the best fitting quality among the four considered models. The results of Figure \ref{fig:results_micro_compare} suggest that invoking the generalized strain may enhance the ability of the model to effectively represent nonlinear inelastic behavior.

This section concludes with predictions for three additional experimental data sets using the optimized parameters listed in Table \ref{table:parameters_micro}, obtained from the previous calibration processes. The data sets, with a maximum stretch of $2.0$ and three distinct stretch rates, are employed to validate the optimized parameters for the different models. The proposed model using the Hencky strain and the model based on the multiplicative decomposition exhibit similar NMAD values in their prediction results. The FLV model shows the largest NMAD, likely due to the limitations of the quadratic energy. The proposed micromechanical model with the Curnier-Rakotomanana strain achieves the lowest NMAD among all four cases. Both the calibration and prediction results suggest that the proposed framework offers an effective approach for characterizing complex material behaviors.

\subsection{Results of numerical simulation}
\label{sec:results_FEM}
We perform finite element analysis to demonstrate the effectiveness of the proposed integration scheme and further assess the proposed models. For the spatial discretization, we employ an inf-sup stable element pair based on non-uniform rational B-splines (NURBS) (\cite{Liu2019a}). The pressure field is interpolated using the tri-linear basis functions, while the velocity and displacement are interpolated by tri-quadratic $C^1$-continuous NURBS basis functions. These higher-order and higher-continuity basis functions enjoy both superior accuracy and robustness in large strain analysis. For the time integration, we utilize the generalized-$\alpha$ scheme, parameterized by a single parameter $\varrho_{\infty}$ (\cite{Jansen2000}), which represents the spectral radius of the amplification matrix at the highest mode. Its value is fixed to be $0$ in our numerical analysis. In the global Newton-Raphson iteration, we set the relative and absolute tolerances as $10^{-10}$ as the convergence criteria; in the local Newton-Raphson iteration, we adopt the relative and absolute tolerances as $10^{-12}$ as the convergence criteria. Unless otherwise specified, we use the meter-kilogram-second system of units. At least two different spatiotemporal meshes are used to ensure mesh independence of the reported results. In this study, we also considered the micromechanical model based on the multiplicative decomposition for comparison purposes. Its constitutive integration follows the algorithm outlined in \cite{Reese1998}, while the discretization of the balance equations is identical to that described above.

\begin{figure}
\begin{center}
\begin{tabular}{cc}
\includegraphics[width=0.35\linewidth, trim=0 20 170 110, clip]{./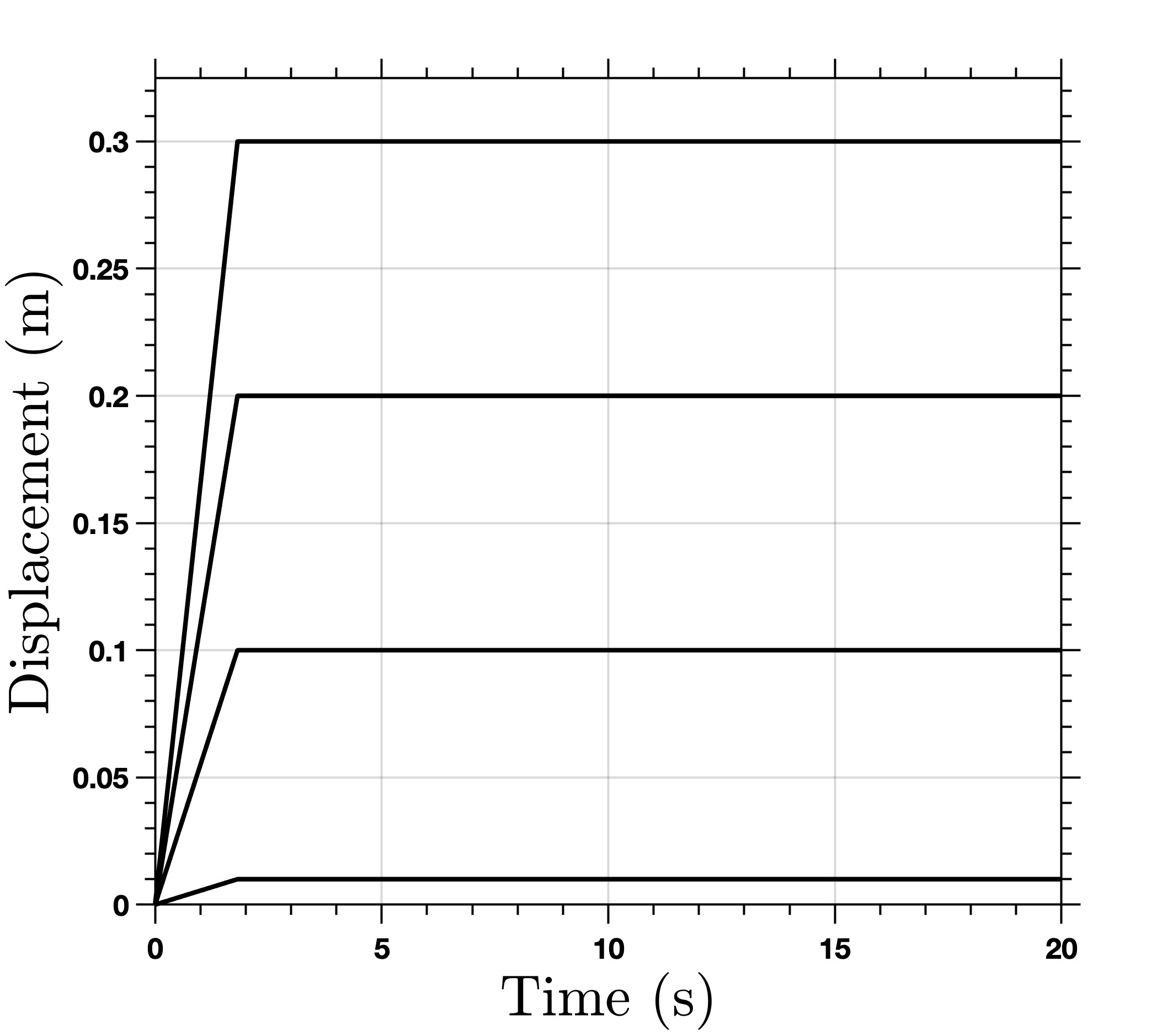} &
\includegraphics[width=0.35\linewidth, trim=0 20 170 110, clip]{./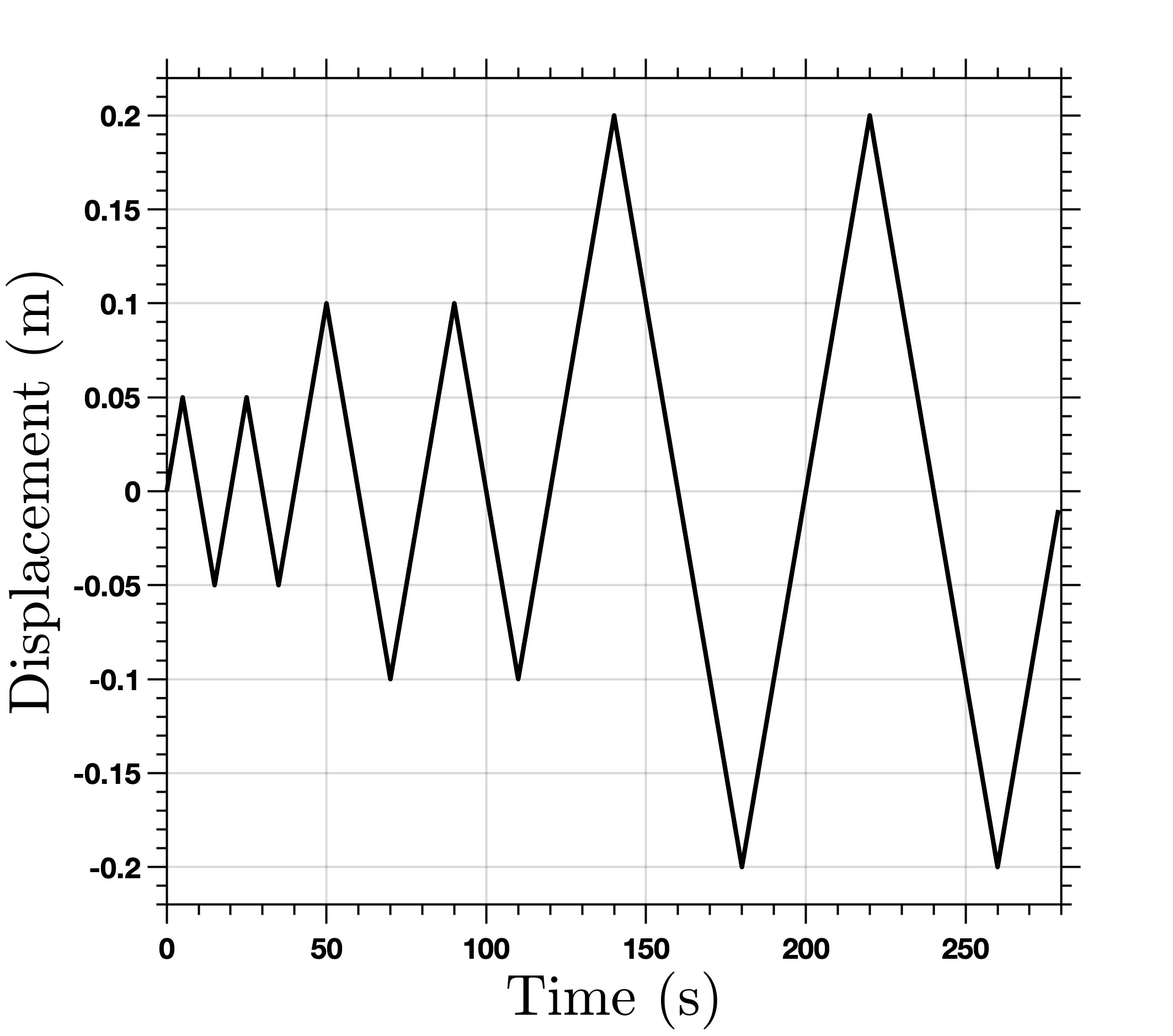}
\end{tabular}
\end{center}
\caption{Loading history: uniaxial tensile-relaxation test (left) and cyclic shear test (right).}
\label{fig:relax-loading-history}
\end{figure}

\paragraph{Uniaxial tensile-relaxation tests}
We perform uniaxial tensile-relaxation tests on a cube to investigate the dissipation behavior of the proposed viscoelasticity models. The size of the cube is $0.1\times 0.1 \times 0.1$, which is clamped at the bottom and subjected to a displacement loading in the vertical direction on its top surface. As shown in Figure \ref{fig:relax-loading-history}, the loading time spans from $t=0$ to $t=20$, and the displacement on the top surface increase from zero to $\{0.01, 0.1, 0.2,0.3\}$ in one second, respectively, and then holds constant for the remainder of the test. On the rest four boundary surfaces, stress-free boundary conditions are applied. The time step size is $\Delta t_n = 0.01$. In this study, we monitor the total dissipation $D_{n+1}$ of the cube over time, which is defined as
\begin{align*}
D_{n+1} := \int_{\Omega_{\bm X}} \Phi_{n+1} d\Omega_{\bm X}.
\end{align*}
For the proposed models, 
\begin{align*}
\Phi_{n+1} := \eta \left\lvert \frac{\bm E^{\mathrm v}_{n+1} -\bm E^{\mathrm v}_{n} }{\Delta t_n} \right\rvert^2,
\end{align*}
while for the model based on the multiplicative decomposition, $\Phi_{n+1} := \eta \left\lvert \bar{\bm d}^{\mathrm v}_{n+1} \right\rvert^2$.

\begin{figure}
\begin{center}
\begin{tabular}{cc}
\includegraphics[angle=0, trim=15 380 160 70, clip=true, scale=0.1]{./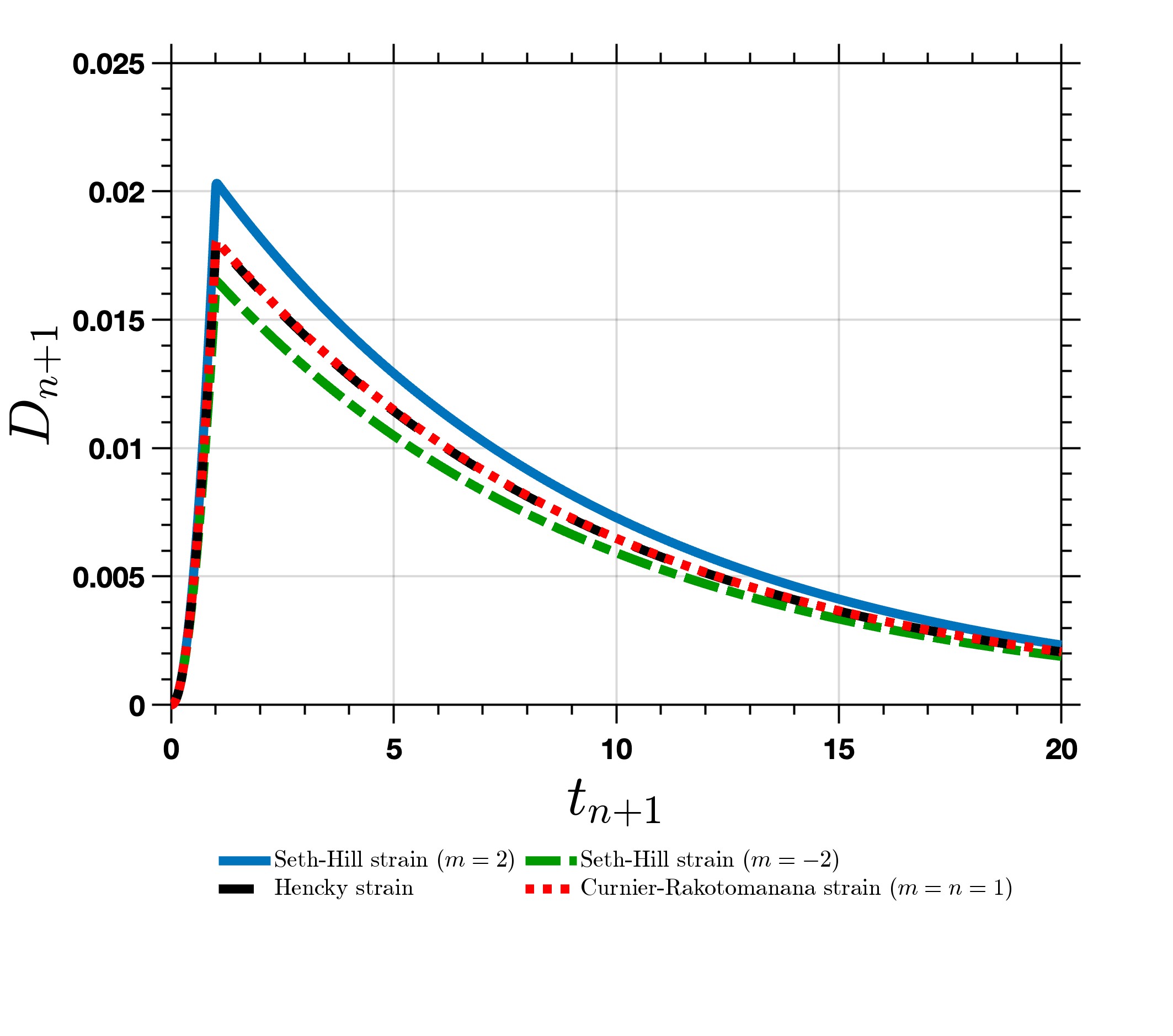} &
\includegraphics[angle=0, trim=15 380 160 70, clip=true, scale=0.1]{./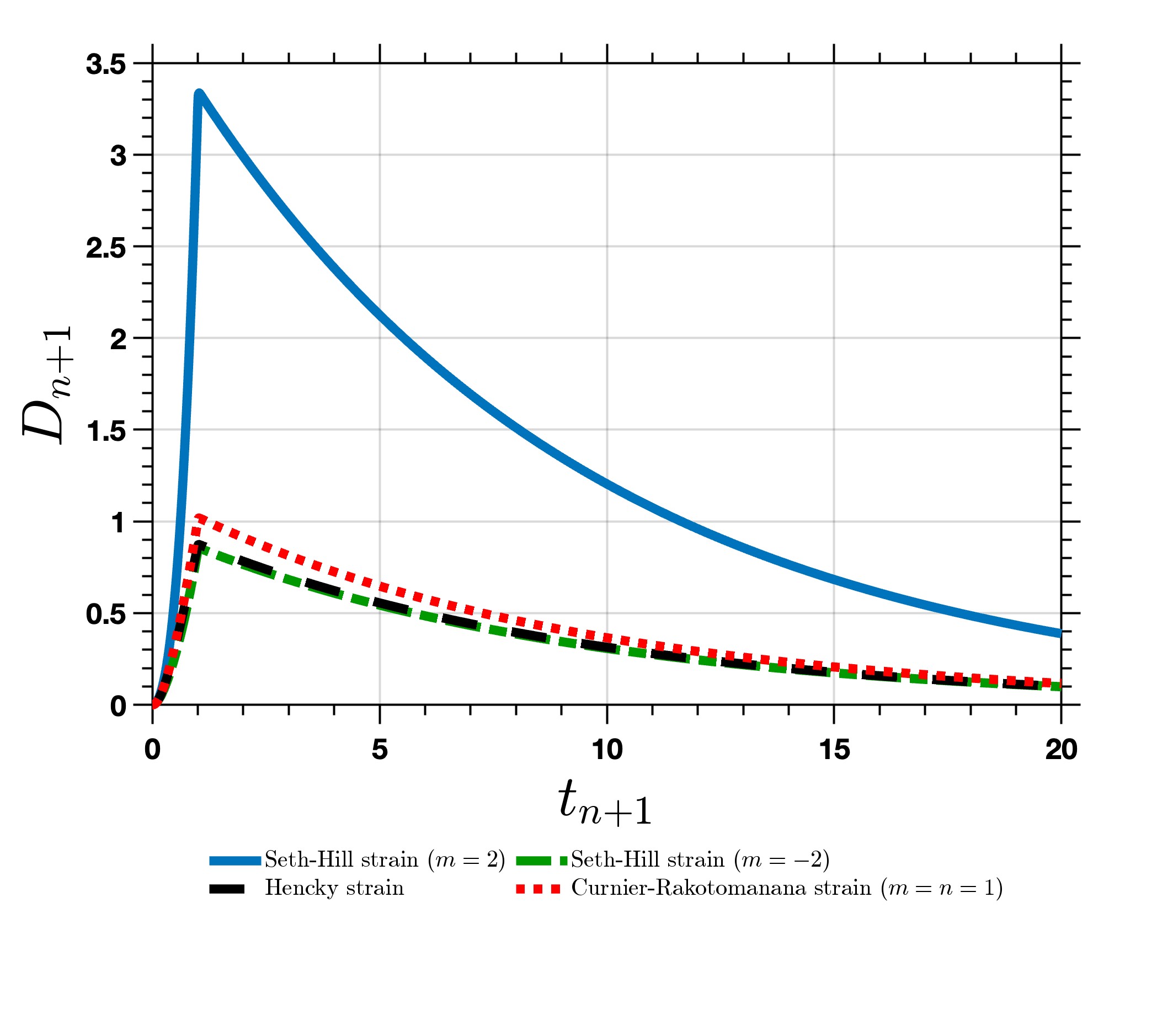} \\
(a) & (b) \\
\includegraphics[angle=0, trim=15 380 160 70, clip=true, scale=0.1]{./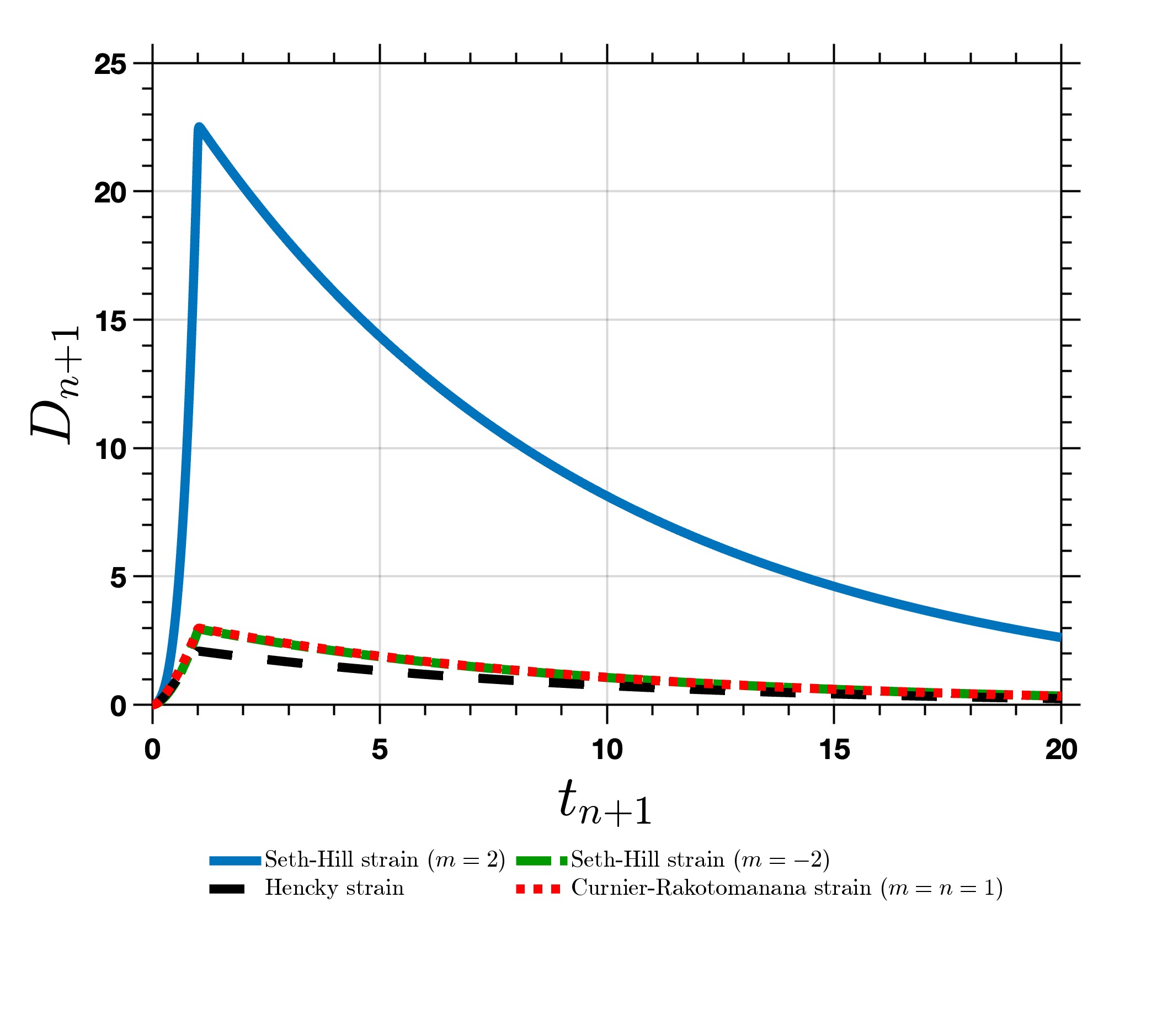} &
\includegraphics[angle=0, trim=15 380 160 0, clip=true, scale=0.1]{./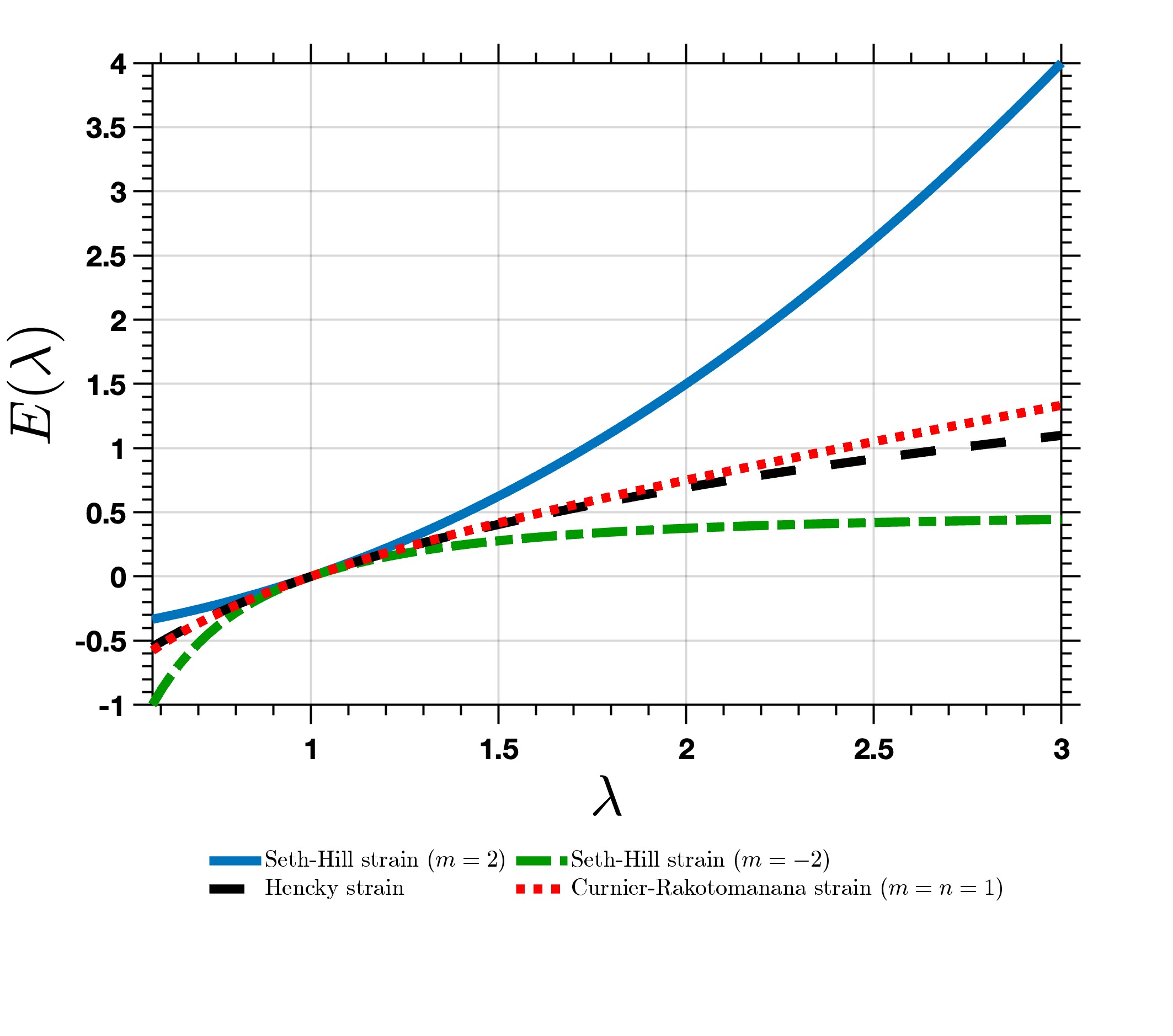} \\
(c) & (d) \\
\multicolumn{2}{c}{
\includegraphics[angle=0, trim=350 240 300 1500, clip=true, scale=0.21]{./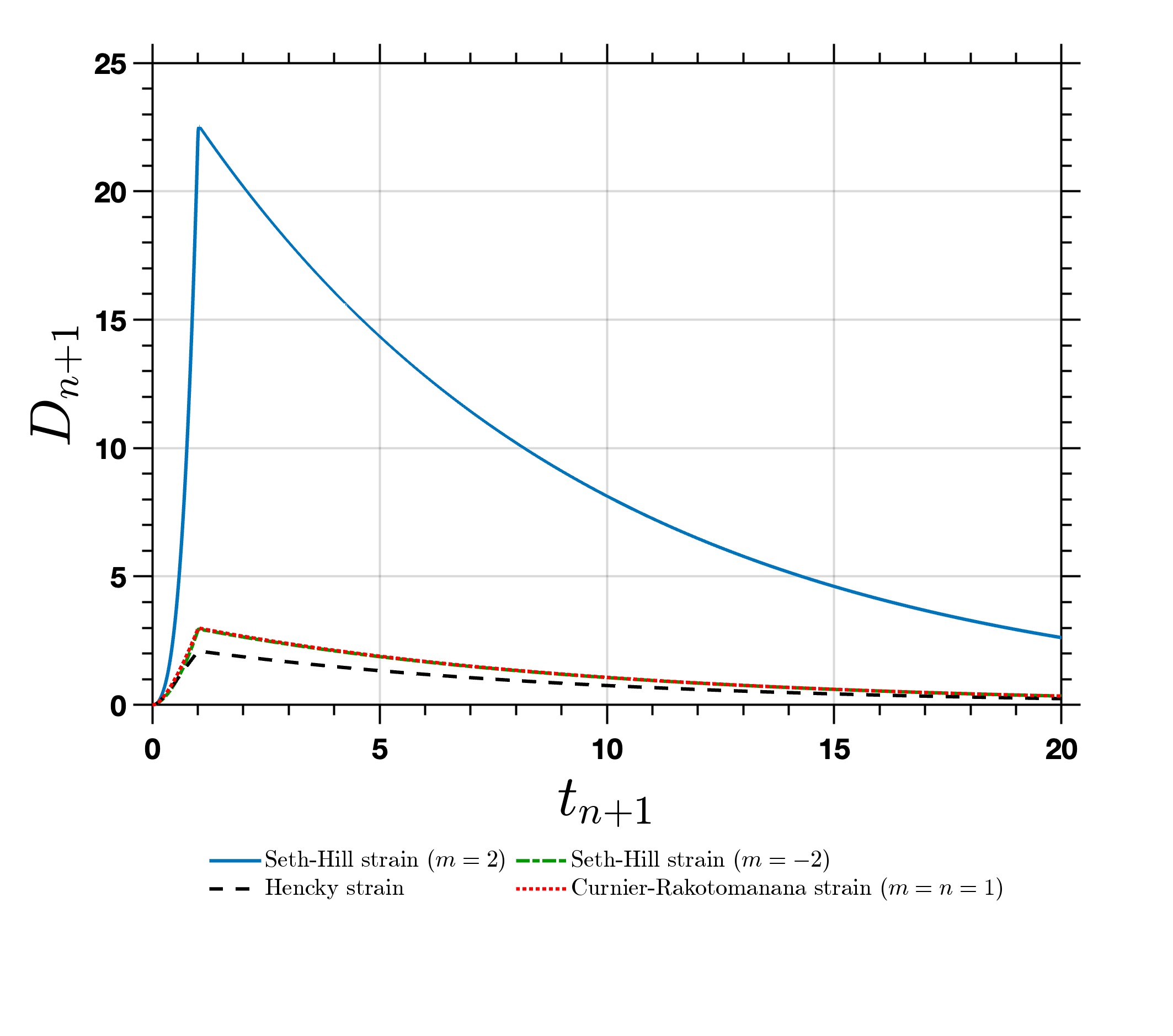}
}
\end{tabular}
\end{center}
\caption{The total dissipation over time for the FLV models with maximum stretches of (a) $1.1$, (b) $2.0$, and (c) $3.0$. The scale functions of the strains are depicted in (d).}
\label{fig:quad_tensile_relax}
\end{figure}

For the FLV model \eqref{eq:sec-results-flv-model}, we consider $N=M=1$, $\rho_0 = 1.0\times 10^3$, $\mu^{\infty} = 2.0 \times 10^4$, and the strain for the equilibrium part is given by the Curnier-Rakotomanana strain with $m=n=1$. For the non-equilibrium part, we adopt $\mu^\mathrm{neq} = 2.0 \times 10^4$, $\tau=17.5$, and $\tilde{\bm E}^\mathrm{neq}$ takes the following four different options: the Hencky strain, the Green-Lagrange strain, the Euler-Almansi strain, and the Curnier-Rakotomanana strain with $m=n=1$. The dissipation of the FLV models over time with different generalized strains is presented in Figure \ref{fig:quad_tensile_relax}. When the maximum stretch is relatively small, the models with different strains exhibit similar dissipation. Notably, the total dissipation of the models with the Hencky strain and Curnier-Rakotomanana strain are nearly identical, a trend also reflected in the similarity of their scale function curves. Indeed, the  Curnier-Rakotomanana strain with $m=n=1$ is also known as the Itskov-Ba\v{z}ant strain, which was designed by \cite{Bazant1998} to approximate the Hencky strain. As the maximum stretch increases to $2.0$ and $3.0$, the model using the Green-Lagrange strain demonstrates the largest dissipation. This behavior is likely due to the quadratic amplification effect of its scale function. The models employing the other three generalized strains display comparable dissipation behavior, which can be attributed to the similar shapes of their respective scale function within the considered stretch range.

\begin{figure}
\begin{center}
\begin{tabular}{cc}
\includegraphics[angle=0, trim=15 320 170 70, clip=true, scale=0.1]{./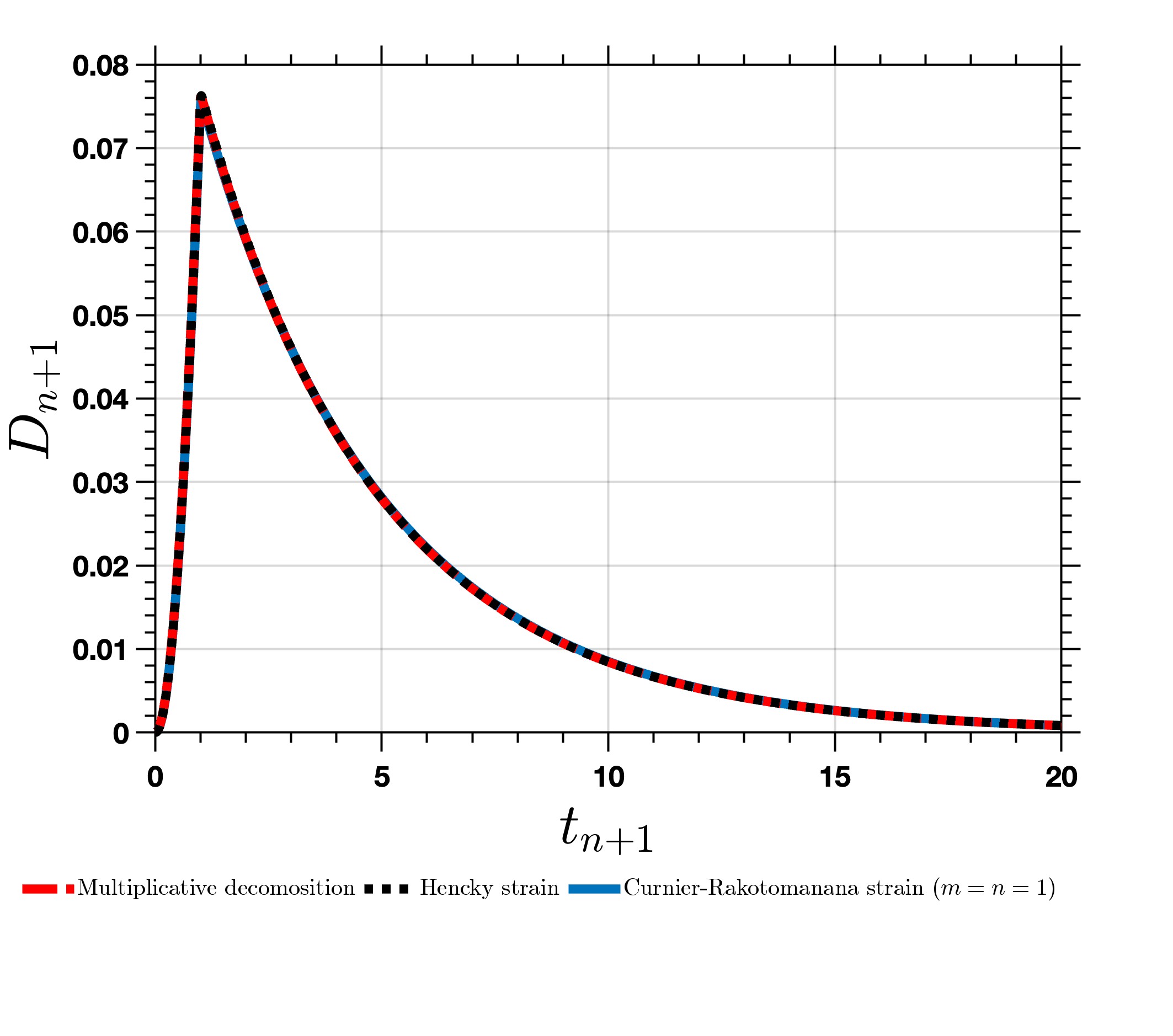} &
\includegraphics[angle=0, trim=15 320 170 70, clip=true, scale=0.1]{./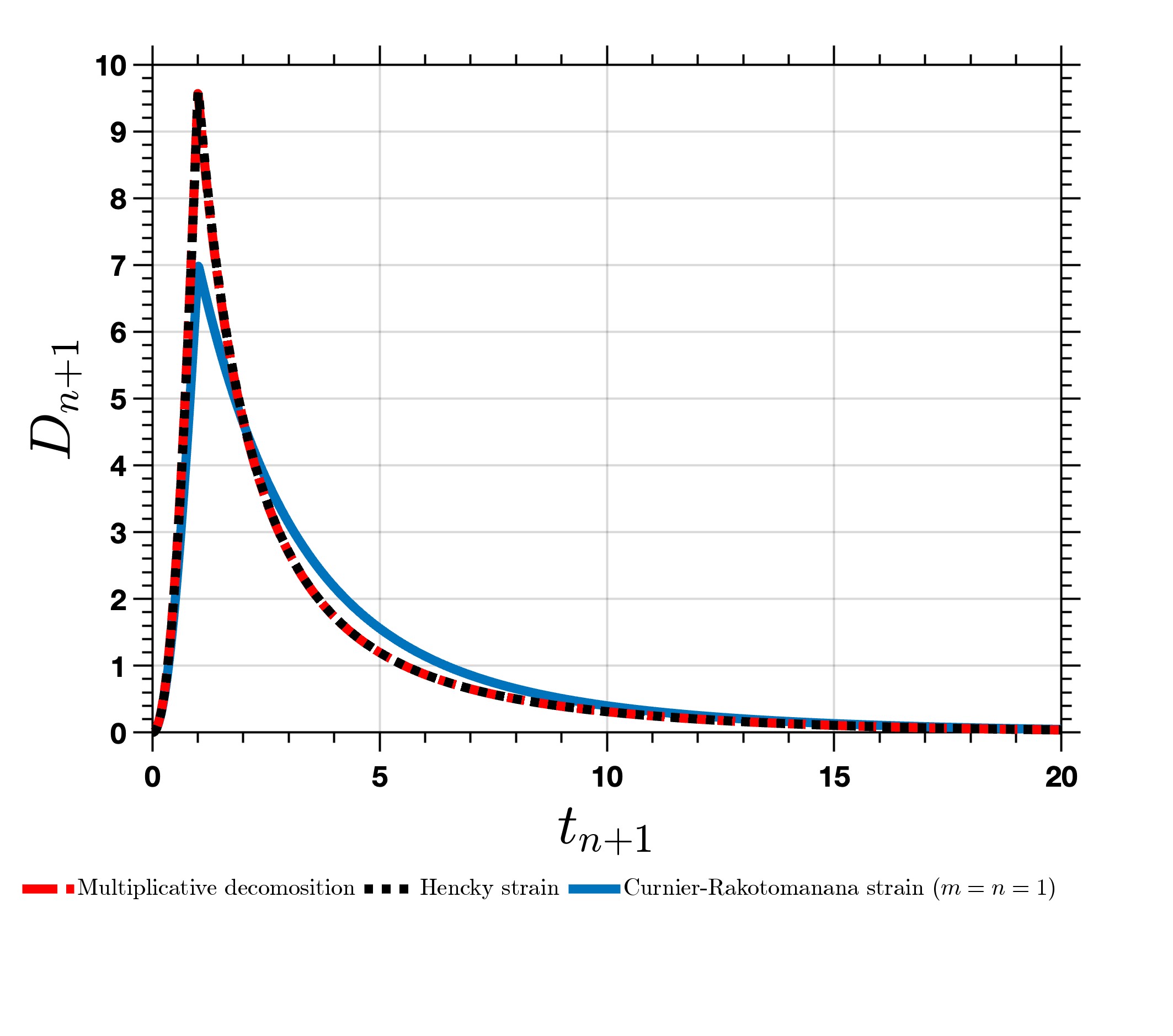} \\
(a) & (b) \\
\includegraphics[angle=0, trim=15 320 170 70, clip=true, scale=0.1]{./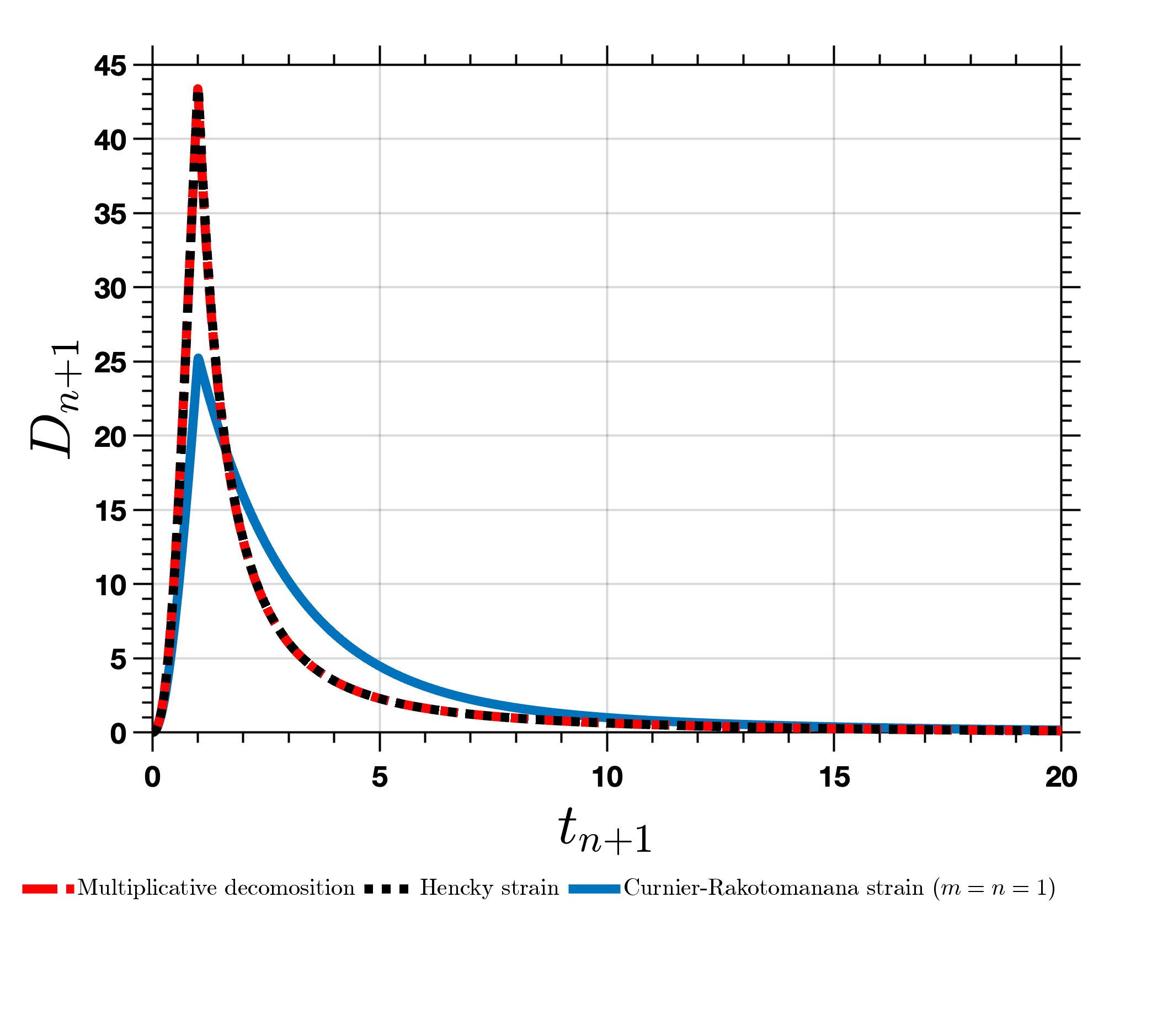} &
\includegraphics[angle=0, trim=15 320 170 70, clip=true, scale=0.1]{./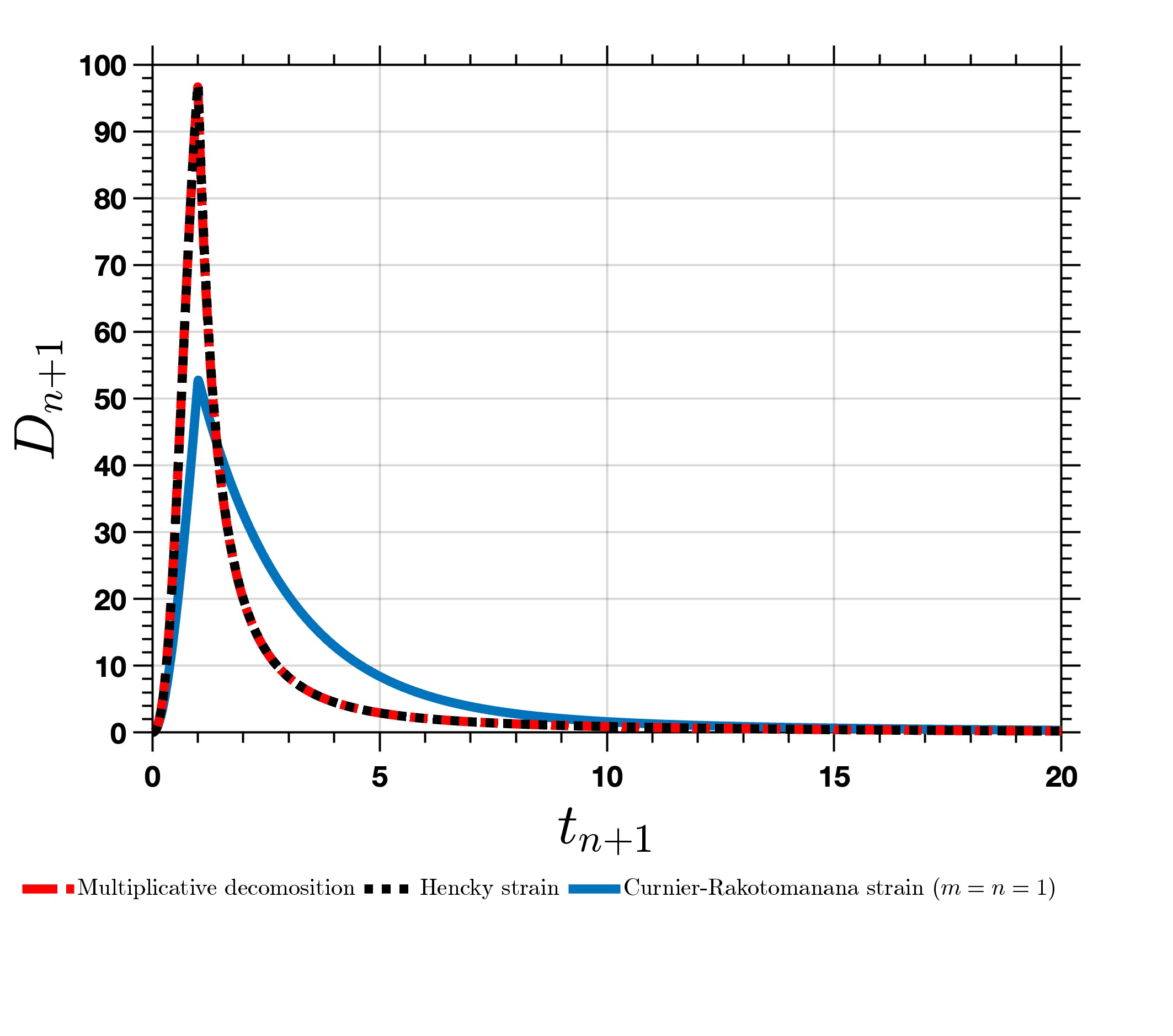} \\
(c) & (d) \\
\multicolumn{2}{c}{
\includegraphics[angle=0, trim=20 220 200 1550, clip=true, scale=0.21]{./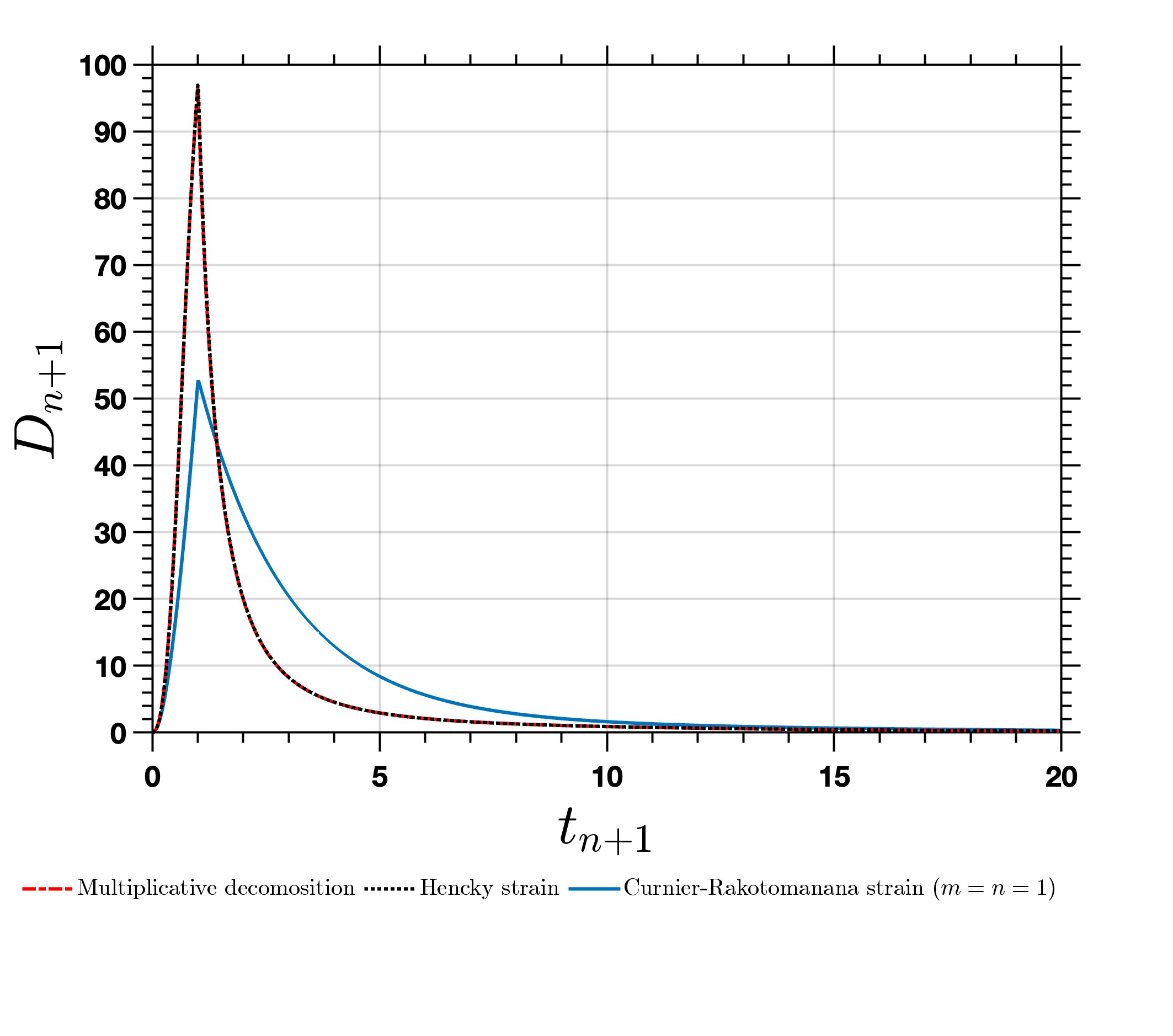}
}
\end{tabular}
\end{center}
\caption{The total dissipation over time for the micromechanical models with maximum stretches of (a) $1.1$, (b) $2.0$, (c) $3.0$, and (d) $4.0$.}
\label{fig:AB_tensile_relax}
\end{figure}

For the micromechanical models \eqref{eq:sec-results-mm-1}-\eqref{eq:sec-results-mm-2}, we set the material parameters as $\rho_0 = 1.0 \times 10^3$, $\mu^{\infty}=\mu^\mathrm{neq} = 2.0 \times 10^4$, $N^\infty= N^\mathrm{neq} = 150$. We consider the following three definitions of the tensor $\bm C^\mathrm{e}$ in the non-equilibrium branch. First, it is defined as $\bar{\bm F}^{\mathrm e \: T} \bar{\bm F}^{\mathrm e}$ based on the multiplicative decomposition; second, it is constructed from $\bm E^{\mathrm e} = \bm E - \bm E^{\mathrm v}$ using the Hencky strain; third, it is constructed from $\bm E^{\mathrm e} = \bm E - \bm E^{\mathrm v}$ using the Curnier-Rakotomanana strain with $m=n=1$. The dissipation of the micromechanical models over time is presented in Figure \ref{fig:AB_tensile_relax}. It can be observed that they dissipate faster compared to the FLV models. This observation aligns with findings reported by \cite{Gouhier2024}. All three models show nearly identical dissipation effects under small deformations. As the maximum stretch increases, the model based on the multiplicative decomposition and the proposed model with the Hencky strain exhibit almost identical dissipation curves, both showing higher values than the proposed model with the Curnier-Rakotomanana strain. The differences are attributed to the distinct shapes of their scale functions.

\begin{figure}
\begin{center}
\begin{tabular}{cc}
\includegraphics[angle=0, trim=10 340 130 40, clip=true, scale=0.1]{./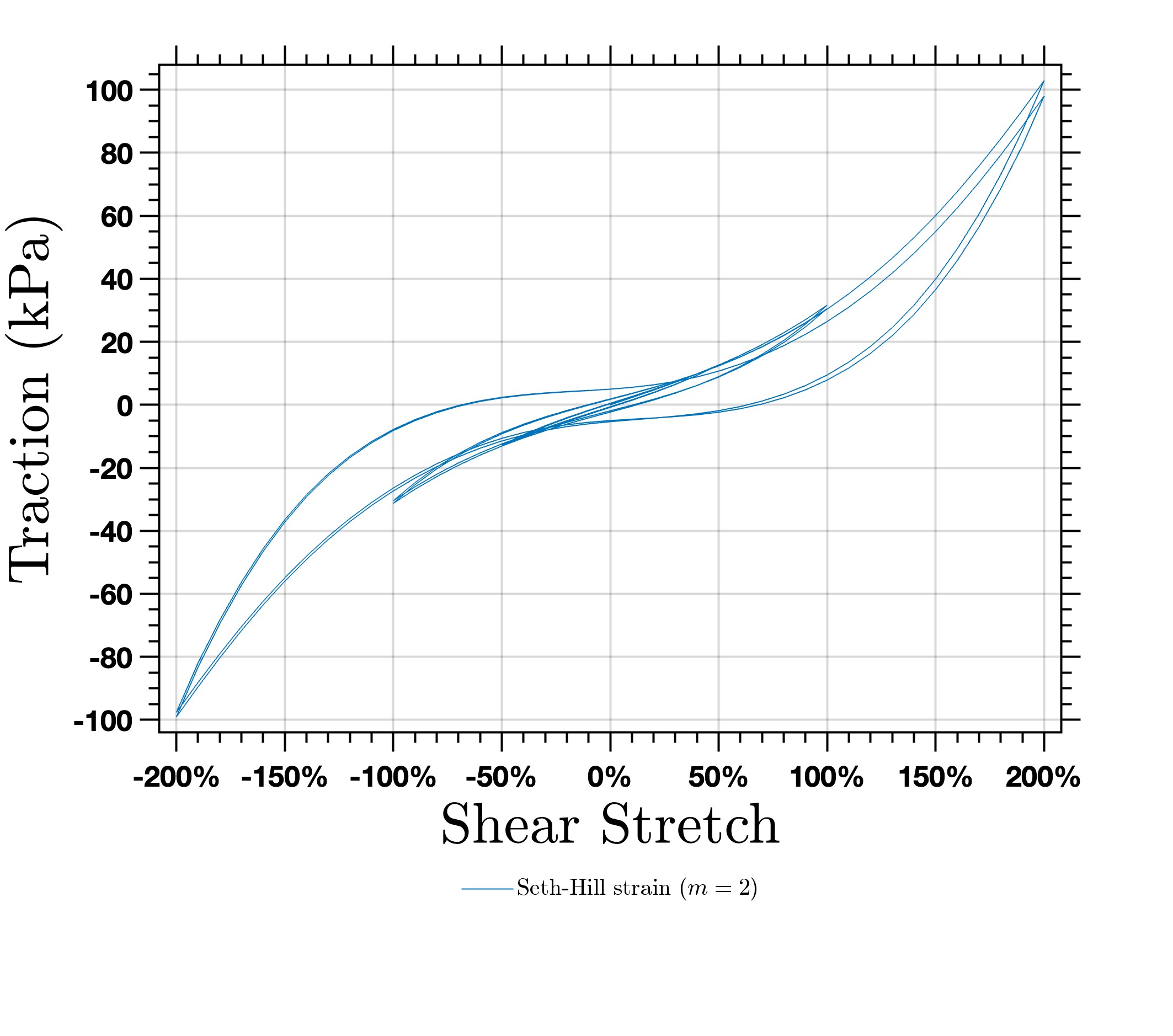} &
\includegraphics[angle=0, trim=10 340 130 40, clip=true, scale=0.1]{./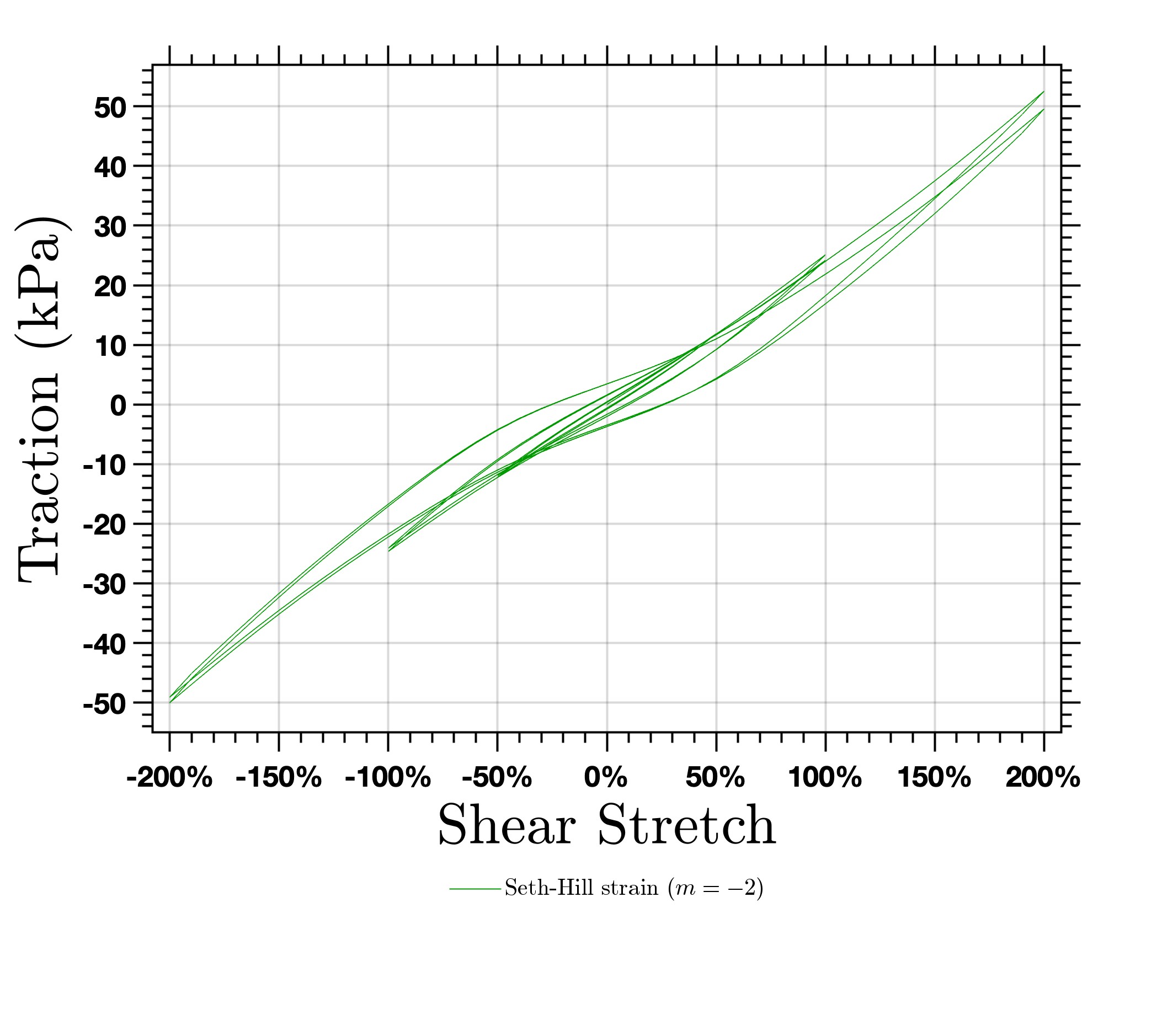} \\
(a) & (b) \\
\includegraphics[angle=0, trim=10 340 130 40, clip=true, scale=0.1]{./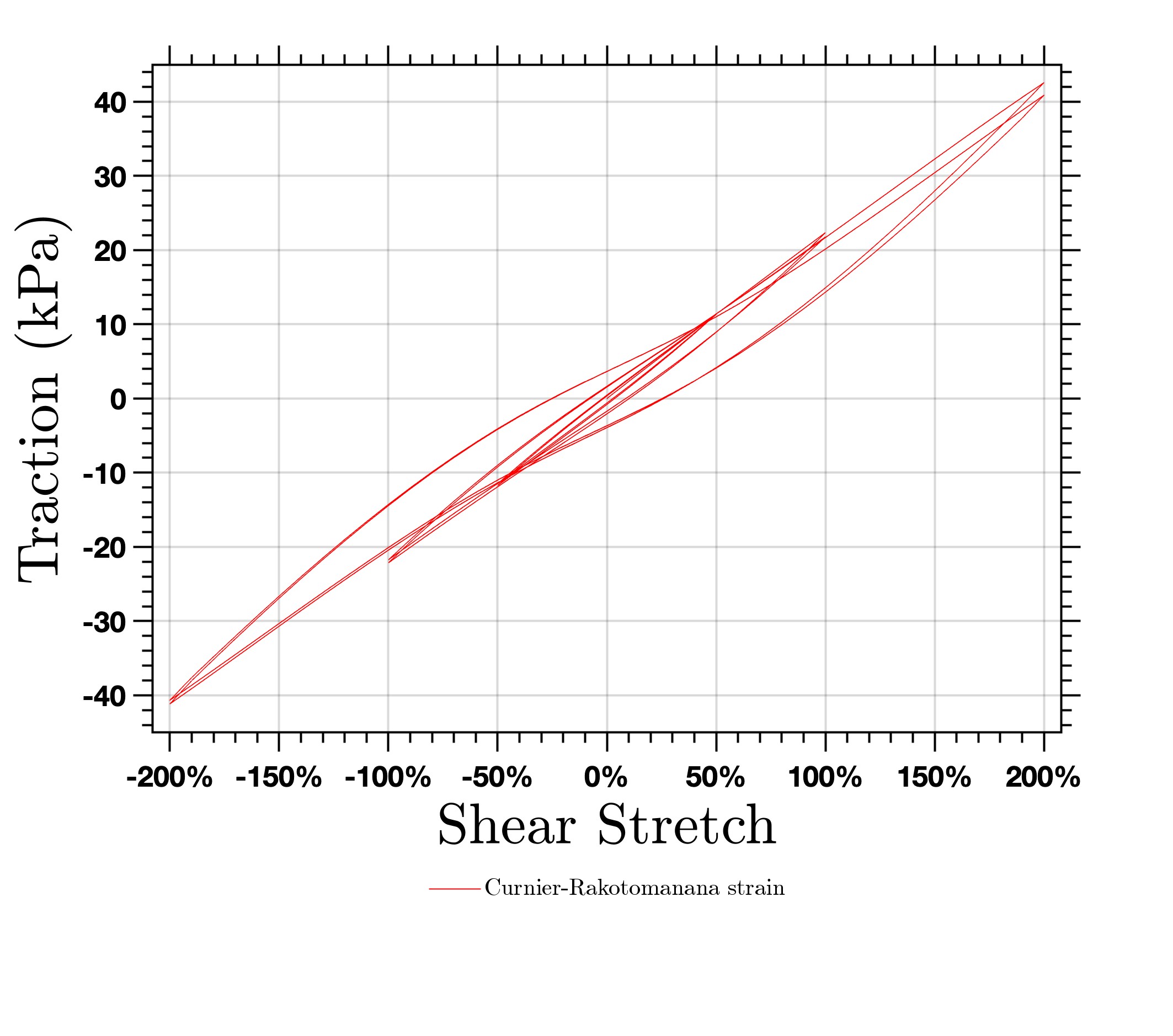} &
\includegraphics[angle=0, trim=10 340 130 40, clip=true, scale=0.1]{./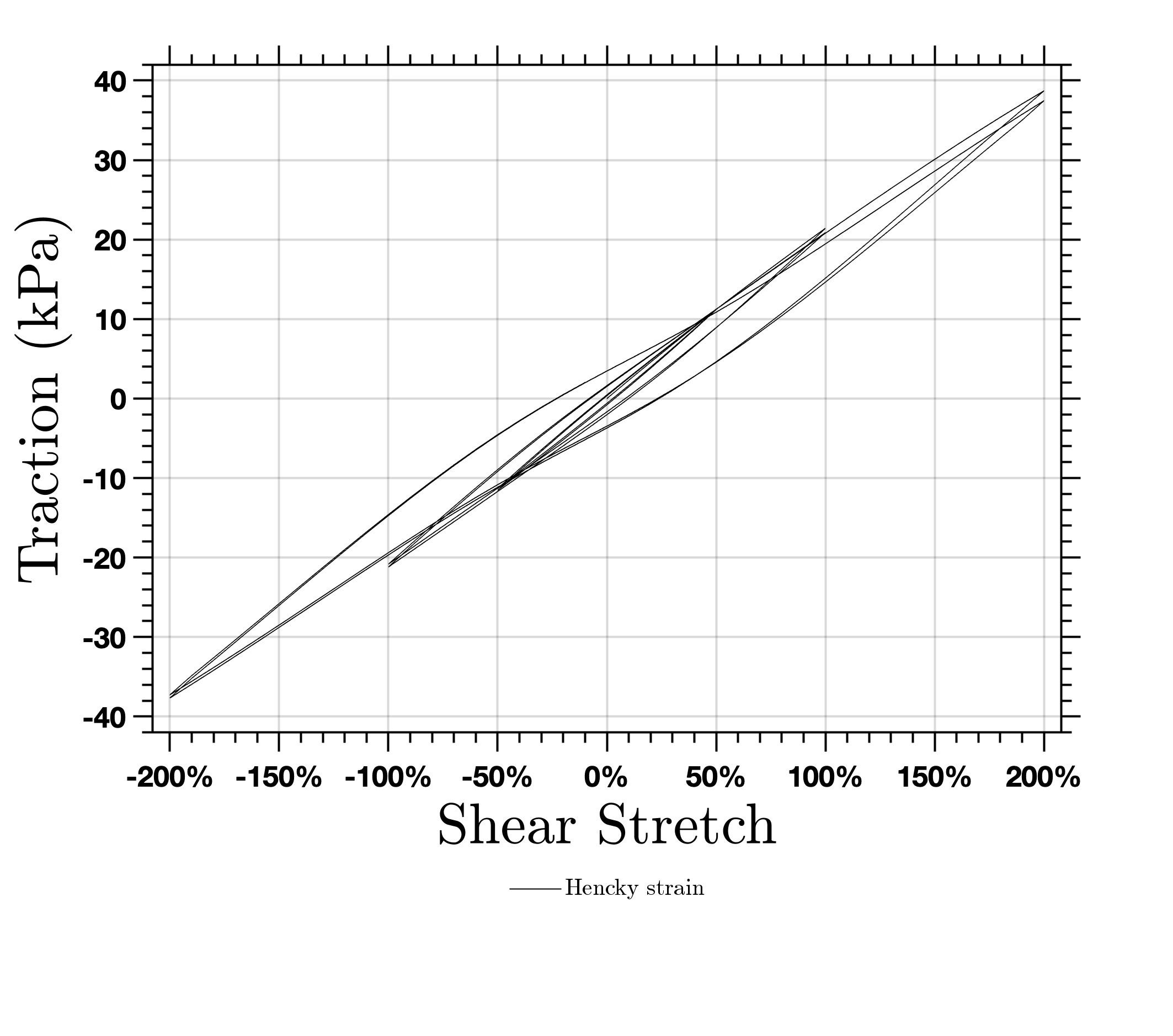} \\
(c) & (d) 
\end{tabular}
\end{center}
\caption{Traction with respect to shear stretch for FLV models: (a) Green-Lagrange strain, (b) Euler-Almansi strain, (c) Curnier-Rakotomanana strain ($m=n=1$), and (d) Hencky strain.}
\label{fig:quad_cyclic_shear}
\end{figure}

\begin{figure}
\begin{center}
\begin{tabular}{c}
\includegraphics[angle=0, trim=10 340 130 40, clip=true, scale=0.12]{./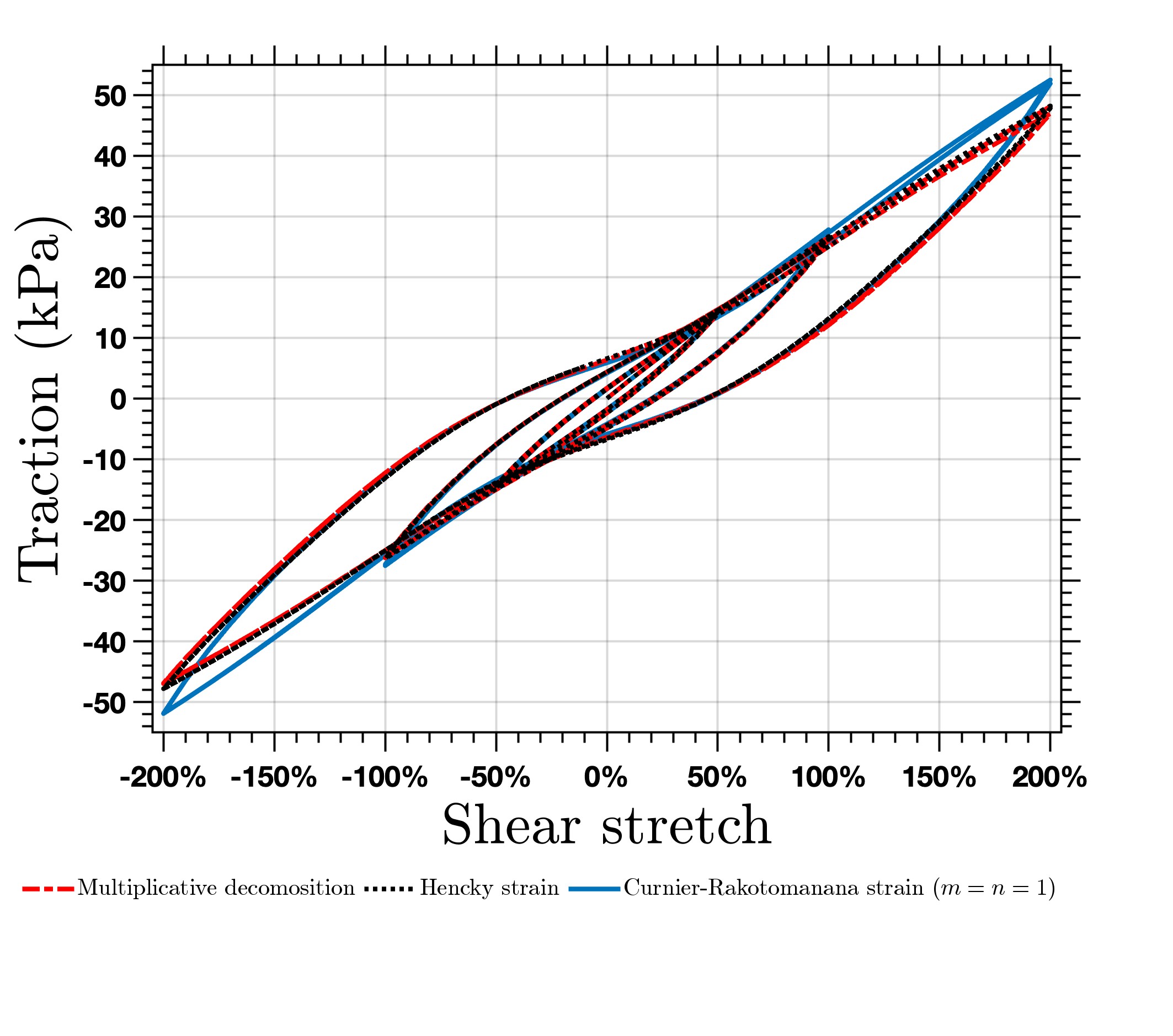} \\
\includegraphics[angle=0, trim=0 240 180 1550, clip=true, scale=0.2]{./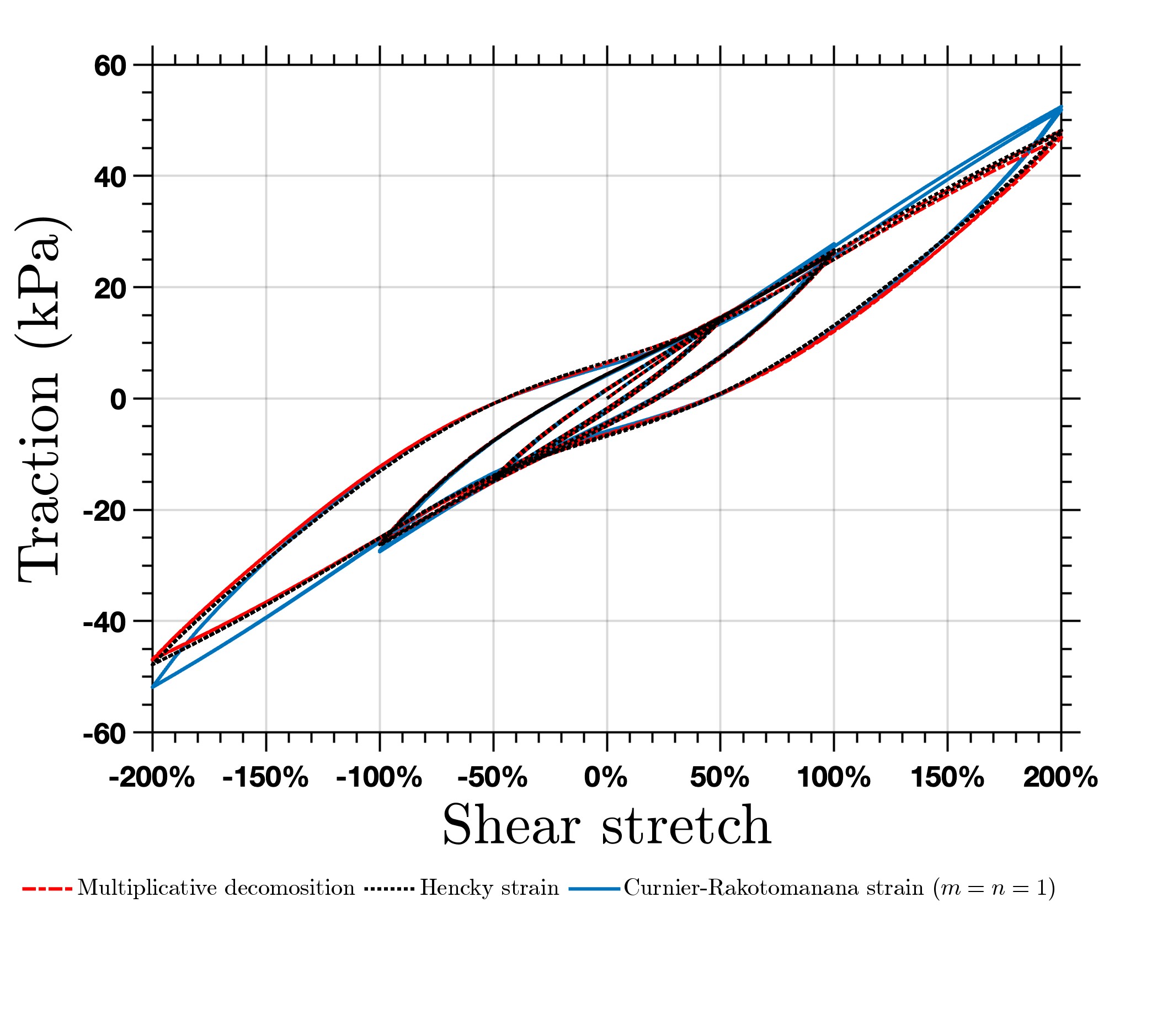}
\end{tabular}
\end{center}
\caption{Traction with respect to shear stretch for the micromechanical models.}
\label{fig:AB_cyclic_shear}
\end{figure}

\paragraph{Cyclic shear tests}
We conduct cyclic shear tests with the geometric configuration and material model settings identical to those used in the tensile-relaxation tests. The bottom boundary is clamped, while the top surface is subjected to horizontal displacement loading, and the loading-unloading cycle is illustrated in Figure \ref{fig:relax-loading-history}. Traction on the top surface is monitored, which is defined as the total force in the loading direction divided by the surface area. The responses of the FLV models are shown in Figure \ref{fig:quad_cyclic_shear}. When the stretch is small (less than 100\%), all four models exhibit almost linear elastic behavior, as indicated by the narrow hysteresis loops and nearly linear stress-strain curve. When the stretch increases to 200\%, the dissipation becomes more evident in the hysteresis loops. In particular, the curve associated with the Green-Lagrange strain gets steeper at large strains with the largest hysteresis loop area. The other three models exhibit similar hysteresis loop curves with no significant stiffening in their material response. The observation is consistent with their scale functions illustrated in Figure \ref{fig:quad_tensile_relax} (d).

Figure \ref{fig:AB_cyclic_shear} presents the hysteresis loops of the micromechanical models, indicating more pronounced nonlinear dissipation. The model based on the multiplicative decomposition and the proposed model with the Hencky strain produce nearly identical loop curves, indicating similar model behavior under shear loading. The proposed model with Curnier-Rakotomanana strain exhibits slightly higher stiffness at the largest shear stretch level. Other than that, it is almost identical to the other two models. The similarity is attributed to the fact that the Curnier-Rakotomanana strain with $m=n=1$ approximates the Hencky strain. 

\section{Conclusion}
\label{sec:conclusion}
In this work, we construct a finite viscoelasticity modeling framework. The adopted modeling assumption is inspired by the kinematic decomposition proposed by \cite{Green1965} in plasticity. Interestingly, this assumption in viscoelasticity guarantees the relaxation of the non-equilibrium stress in the thermodynamic equilibrium limit. The developed theory does not involve an imaginary intermediate configuration, which has been a controversial concept induced by the widely employed multiplicative decomposition of the deformation gradient (\cite{Lee1969}). In this regard, the proposed kinematic decomposition makes the constitutive theory fundamentally different from existing theories based on the multiplicative decomposition. If both $\bm E$ and $\bm E^{\mathrm v}$ are chosen as the Hencky strains, one may show that the proposed decomposition is equivalent to the multiplicative decomposition under certain deformation states (\cite[Appendix~C]{Liu2024}). This insight is corroborated by our model calibration and finite element analysis results. We may therefore view the proposed kinematic decomposition as a generalization or enrichment of the existing multiplicative decomposition, considering there are infinitely many choices of the generalized strains. The kinematic decomposition thus offers a more flexible mechanism for characterizing the viscous deformation, resulting in a constitutive theory that allows one to adjust and calibrate the decomposition. The modeling framework is further refined by considering coercive strains, which enables us to introduce the elastic strain $\bm E^{\mathrm e}$ as $\bm E - \bm E^{\mathrm v}$ locally. It subsequently leads to the elastic deformation tensor $\bm C^{\mathrm e}$, from which we may construct the configurational free energy by leveraging existing designs of hyperelastic free energy. In particular, we exploited the eight-chain model of \cite{Arruda1993} in this work to characterize the response in the non-equilibrium branch.

A particularly interesting instantiation is a family of models based on the quadratic form of the configurational free energy, which yields linear evolution equations for the internal variables. We therefore term it the family of finite linear viscoelasticity models. They characterize the nonlinear material response through the generalized strain $\bm E$. The evolution equation and its hereditary integral are analogous to those of the identical polymer chain model (\cite{Govindjee1992,Holzapfel1996}), which can be conveniently integrated by a single-step recurrence formula. If we further claim the existence of $\bm \Gamma \in$ Sym(3)$_{+}$, the finite linear viscoelasticity models recover the models of \cite{Green1946} and \cite{Simo1987}  by setting $\bm E^{\mathrm v}$ as the Euler-Almansi and Green-Lagrange strain of $\bm \Gamma$, respectively. This reveals the fundamental connection between those established models and offers a generalization of them.

In the calibration results based on the experimental data of VHB 4910, it is found that both the fitting and prediction qualities are improved when the strain parameters are subject to optimization in both the finite linear viscoelasticity models and micromechanical models. When the Crunier-Rakotomanana strain is utilized, the performance of the finite linear viscoelasticity model becomes comparable to that of the micromechanical models. Considering its simplicity in constitutive integration, the generalized strains make the finite linear viscoelasticity model both effective and appealing. The results demonstrate the overall effectiveness of the proposed framework. Furthermore, we examined different models based on varying kinematic assumptions through finite element analysis. The finite linear viscoelasticity models dissipate more slowly than the micromechanical models. Their dissipation and shape of hysteresis loops are closely related to the form of the strains.

We believe that a new door has been opened in modeling viscoelasticity, or inelasticity in general. It is worthwhile to further investigate the performance of the finite linear viscoelasticity models, considering that there are many possibilities in terms of the strain choice. The linear evolution equation makes it particularly attractive in practice. Additionally, the recently developed micromechanical models offer further potential for exploration. The viscous behavior considered in this work is grounded in the reciprocal principle of Onsager, leading to a relatively simple model for the viscous mechanism. The non-Newtonian effect warrants further investigation within the framework of generalized thermodynamics. Finally, the Mullins effect and material anisotropy will be incorporated into the framework to model more realistic material behaviors.

\section*{Acknowledgements}
This work is supported by the National Natural Science Foundation of China [Grant Numbers 12472201, 12172160], Shenzhen Science and Technology Program [Grant Number JCYJ20220818100600002], Southern University of Science and Technology [Grant Number Y01326127], and the Department of Science and Technology of Guangdong Province [2021QN020642]. Computational resources are provided by the Center for Computational Science and Engineering at the Southern University of Science and Technology.

\appendix
\section{The volumetric free energies}
\label{appendix:vol-energy}
Although the forms of $G_{\mathrm{vol}}$ listed in Table \ref{table:vol_energy} offer certain simple and appealing constitutive relations, it needs to be pointed out that it is not always feasible to obtain an analytical form of $G_{\mathrm{vol}}$ from the existing forms of $\Psi_{\mathrm{vol}}$ in the literature. Examples include the ANSYS 2000 model, HN03 model, and O72 model listed in Table \ref{table:vol_energy_expansion}. However, one may still examine the behavior of their corresponding $G_{\mathrm{vol}}$ by constructing a series representation. From the series representations, one may notice that all models can be viewed as perturbations of the quadratic model with higher-order terms. In Figure \ref{fig:illustration-vol-energy}, different volumetric models are depicted in terms of the Helmholtz energy $\Psi_{\mathrm{vol}}$ as a function of $J$ and the Gibbs energy $G_{\mathrm{vol}}$ as a function of $P/\kappa$.

\begin{table}[htbp]
\begin{center}
\tabcolsep=0.19cm
\renewcommand{\arraystretch}{1.6}
\begin{tabular}{P{4.5cm} P{4.6cm} P{6.5cm}  }
\hline
& $\Psi_{\mathrm{vol}} / \kappa$ & Series representation of $G_{\mathrm{vol}}/\kappa$  \\
\hline
Incompressible model & - & $\frac{P}{\kappa}$ \\
Quadratic model \cite{Peng1975} &  $(J-1)^2/2$ & $\frac{P}{\kappa} - \frac{P^2}{2\kappa^2}$ \\
ST91 model \cite{Simo1991} & $\frac{1}{4} \left( J^2 -2\ln(J) - 1 \right)$ & $\frac{P}{\kappa} - \frac{P^2}{2\kappa^2} + \frac{P^3}{6\kappa^3} + \mathcal O(\frac{P^5}{\kappa^5})$ \\
M94 model \cite{Miehe1994} & $(J - \ln(J) - 1)$ & $\frac{P}{\kappa} - \frac{P^2}{2\kappa^2} + \frac{P^3}{3\kappa^3} - \frac{P^4}{4\kappa^4} + \mathcal O(\frac{P^5}{\kappa^5})$   \\
L94 model \cite{Liu1994} & $(J \ln(J)-J+1)$ & $\frac{p}{\kappa} - \frac{P^2}{2\kappa^2} + \frac{P^3}{6\kappa^3} - \frac{P^4}{24\kappa^4} + \mathcal O(\frac{P^5}{\kappa^5})$  \\
ANSYS 2000 model \cite{Hartmann2003} & $\frac{1}{32}(J^2 - J^{-2})^2$ & $\frac{P}{\kappa} - \frac{P^2}{2\kappa^2} + \frac{P^3}{2\kappa^3} + \mathcal O(\frac{P^5}{\kappa^5})$ \\
HN03 model \cite{Hartmann2003} & $\frac{1}{50}(J^5+J^{-5} - 2)$ & $\frac{P}{\kappa} - \frac{P^2}{2\kappa^2} + \frac{P^3}{2\kappa^3} + \frac{3P^4}{8\kappa^4} + \mathcal O(\frac{P^5}{\kappa^5})$ \\
O72 model \cite{Ogden1972} & $\gamma^{-2}(\gamma \ln(J) + J^{-\gamma} -1)$ & $\frac{P}{\kappa} - \frac{P^2}{2\kappa^2} + \frac{(\gamma+3)P^3}{6\kappa^3} - \frac{(\gamma+2)(\gamma+4)P^4}{12\kappa^4} + \mathcal O(\frac{P^5}{\kappa^5})$ \\
\hline
\end{tabular}
\end{center}
\caption{A list of volumetric energies and the series representation of the corresponding $G_{\mathrm{vol}}$. Notice that both energies are scaled by $\kappa$ to make their representations dimensionless. }
\label{table:vol_energy_expansion}
\end{table}

\begin{figure} 
  \begin{center}
  \begin{tabular}{cc} 
\includegraphics[angle=0, trim=120 85 160 120, clip=true, scale=0.22]{./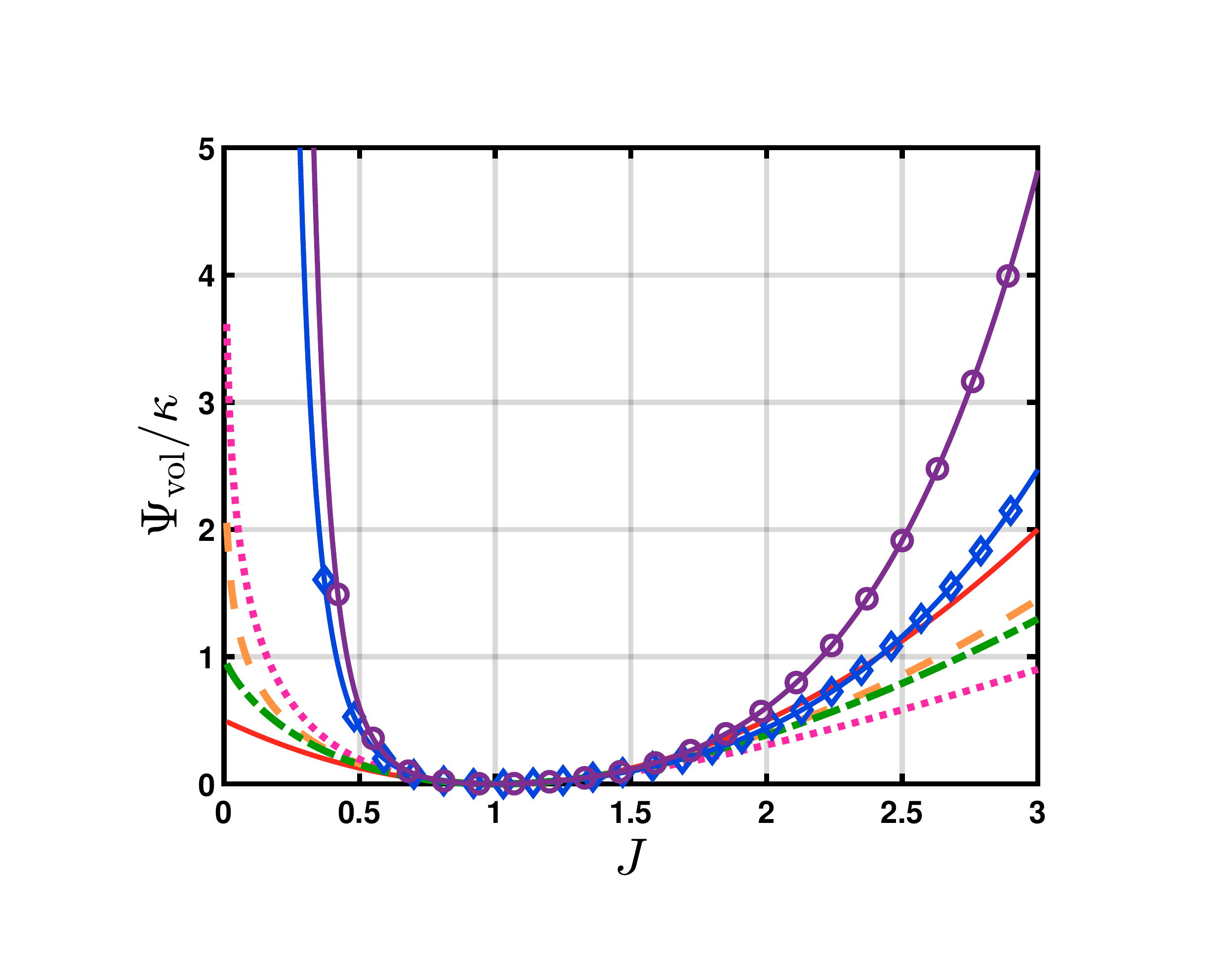} &
\includegraphics[angle=0, trim=90 85 150 120, clip=true, scale=0.22]{./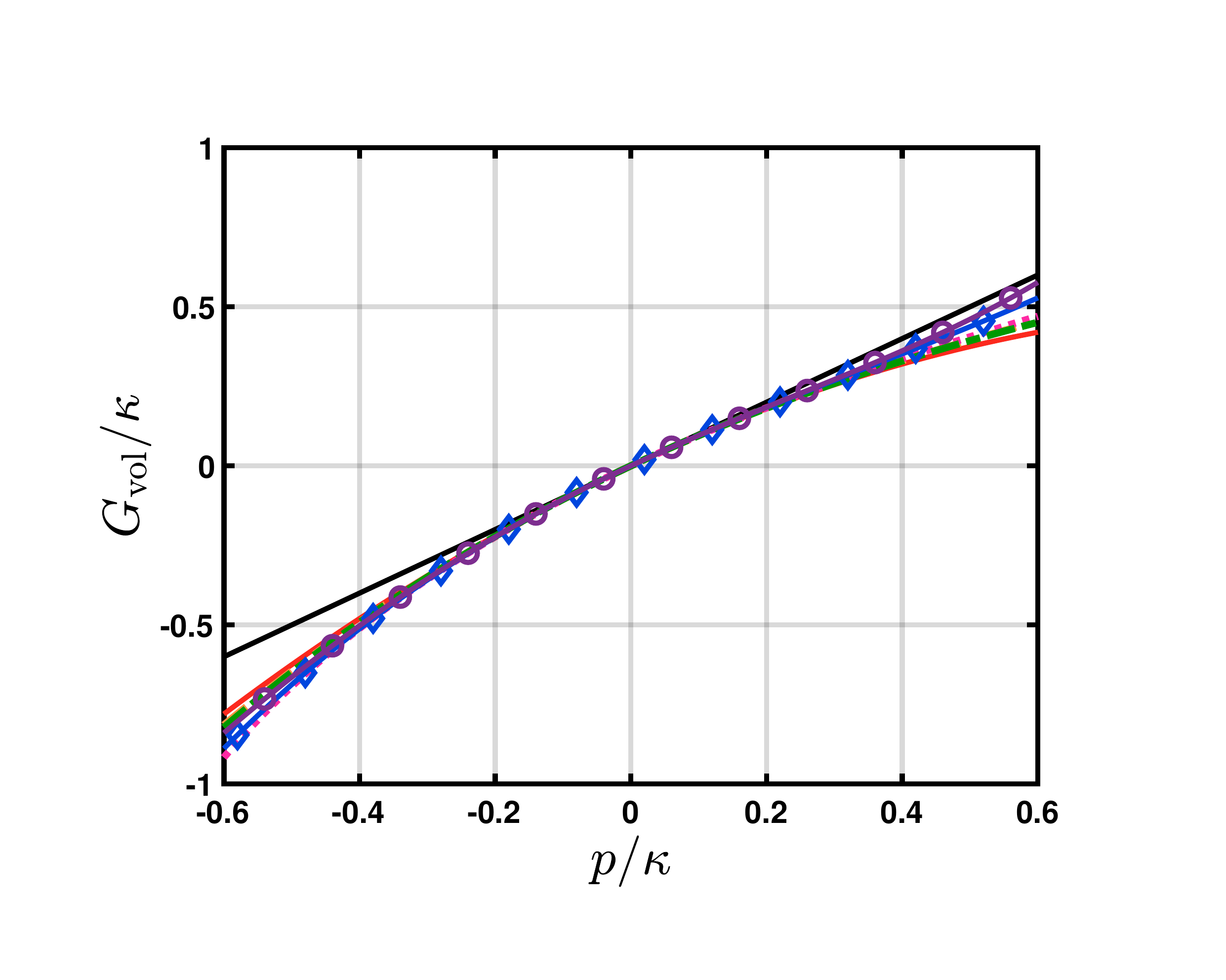} \\
\multicolumn{2}{c}{
\includegraphics[angle=0, trim=420 210 360 1165, clip=true, scale = 0.32]{./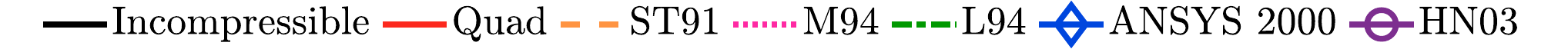} 
}
\end{tabular} 
\end{center}
\caption{Illustration of the volumetric energies $\Psi_{\mathrm{vol}}/\kappa$ (left) and $G_{\mathrm{vol}}/\kappa$ (right). Notice that the plots of $G_{\mathrm{vol}}/\kappa$ for the ANSYS 2000 model and HN03 model are made based on their series representation given in Table \ref{table:vol_energy_expansion}.}
\label{fig:illustration-vol-energy} 
\end{figure}

\section{An explicit expression of $\bm{\mathcal{K}}$}
\label{appendix:mathcal-K}
Following the derivations and formulas of $\bm{\mathcal{L}}$ documented in \cite{Liu2024}, we provide the explicit expressions for the rank-six tensor 
\begin{align*}
\bm{\mathcal K} := \frac12 \frac{\partial \mathbb Q^{-1}}{\partial \bm E} = \frac14 \frac{\partial^2 \bm C}{\partial \bm E \partial \bm E},
\end{align*}
as
\begin{align*}
\bm{\mathcal{K}} = \sum_{a=1}^3 g_{a} \bm{M}_a  \otimes \bm{M}_a  \otimes \bm{M}_a + \sum_{a = 1}^3 \sum_{b \neq a}^3 \zeta_{ab} \left( \mathbb{H}_{aba} + \mathbb{H}_{aab} + \mathbb{H}_{baa} \right) + \sum_{a = 1}^3 \sum_{b \neq a}^3 \sum_{\substack{ c \neq a \\  c \neq b} }^3 \varrho \mathbb{H}_{abc},
\end{align*}
in which
\begin{gather*}
g_{a} = \frac12 \left( \frac{1}{E^{'\:2}(\lambda_{a})} - \lambda_{a} \frac{E^{''}(\lambda_{a})}{E^{'\:3}(\lambda_{a})}\right), \displaybreak[2] \\
\zeta_{ab} = \frac18 \frac{\sigma_{ab}- c_{b}}{E(\lambda_{a})-E(\lambda_{b})}, \quad c_{a} = \frac{\lambda_a}{E'(\lambda_{a})}, \quad \sigma_{ab} = \frac{\lambda_a^2 - \lambda_b^2}{ 2(E(\lambda_{a}) - E(\lambda_{b})) }, \displaybreak[2] \\
\varrho = \frac{1}{32}\sum_{a = 1}^3 \sum_{b \neq a}^3 \sum_{\substack{ c \neq a \\  c \neq b} }^3 \frac{\lambda_a^2}{\left(E(\lambda_{a}) - E(\lambda_{b}) \right) \left(E(\lambda_{a}) - E(\lambda_{c}) \right)} \displaybreak[2] \\
\mathbb H_{abc} := \bm N_a \otimes \bm N_b \otimes \bm N_c \otimes \bm N_a \otimes \bm N_b \otimes \bm N_c + \bm N_a \otimes \bm N_b \otimes \bm N_c \otimes \bm N_a \otimes \bm N_c \otimes \bm N_b \displaybreak[2] \\
+ \bm N_a \otimes \bm N_b \otimes \bm N_a \otimes \bm N_c \otimes \bm N_b \otimes \bm N_c + \bm N_a \otimes \bm N_b \otimes \bm N_a \otimes \bm N_c \otimes \bm N_c \otimes \bm N_b \displaybreak[2] \\
 + \bm N_b \otimes \bm N_a \otimes \bm N_c \otimes \bm N_a \otimes \bm N_b \otimes \bm N_c + \bm N_b \otimes \bm N_a \otimes \bm N_c \otimes \bm N_a \otimes \bm N_c \otimes \bm N_b \displaybreak[2] \\
+ \bm N_b \otimes \bm N_a \otimes \bm N_a \otimes \bm N_c \otimes \bm N_b \otimes \bm N_c + \bm N_b \otimes \bm N_a \otimes \bm N_a \otimes \bm N_c \otimes \bm N_c \otimes \bm N_b.
\end{gather*}
When there exist identical stretches, $\zeta_{ab}$ and $\varrho$ are replaced by the following
\begin{align*}
\lim\limits_{\lambda_b \to \lambda_a} \zeta_{ab} = \frac18 g_a, \quad \lim\limits_{\substack{\lambda_b \to \lambda_a \\ \lambda_c \neq \lambda_a}} \varrho = \zeta_{ca}, \quad \mbox{and} \quad \lim\limits_{\substack{\lambda_b \to \lambda_a \\ \lambda_c \to \lambda_a}} \varrho = \frac{1}{8} g_{a}.
\end{align*}
The tensor $\bm{\mathcal{K}}$ is needed in the constitutive integration for the nonlinear models discussed in Section \ref{sec:micromechanical-model} and is in fact constructed based on the eigendecomposition of $\bm C^{\mathrm e}$. The superscript $\mathrm e$ is omitted here for notational simplicity in the formulas.

\bibliography{viscoelasticity_theory}

\end{document}